\newtheorem{Thm}{Theorem}[section]
\newtheorem{Prop}[Thm]{Proposition}
\newtheorem{Lem}[Thm]{Lemma}
\newtheorem{Cor}[Thm]{Corollary}
\newtheorem{Rem}[Thm]{Remark}
\newtheorem{Conj}{Conjecture}
\newcommand{\Z}{\mathbb{Z}}
\newcommand{\R}{\mathbb{R}}
\newcommand{\C}{\mathbf{C}}
\newcommand{\Ker}{\text{Ker}}
\newcommand{\Id}{\text{Id}}
\newcommand{\Cs}{\mathrm{C}^*}
\newcommand{\ra}{\rightarrow}
\title{Equivariant K-homology and K-theory for some discrete planar affine groups} 
\author{Ramon FLORES, Sanaz POOYA, \\
and Alain VALETTE}
\date{\today}
\begin{document}

\maketitle

\begin{abstract} We consider the semi-direct products $G=\Z^2\rtimes GL_2(\Z), \Z^2\rtimes SL_2(\Z)$ and $\Z^2\rtimes\Gamma(2)$ (where $\Gamma(2)$ is the congruence subgroup of level 2). For each of them, we compute both sides of the Baum-Connes conjecture, namely the equivariant $K$-homology of the classifying space $\underline{E}G$ for proper actions on the left-hand side, and the analytical K-theory of the reduced group $C^*$-algebra on the right-hand side. The computation of the LHS is made possible by the existence of a 3-dimensional model for $\underline{E}G$, which allows to replace equivariant K-homology by Bredon homology. We pay due attention to the presence of torsion in $G$, leading to an extensive study of the wallpaper groups associated with finite subgroups. For the first and third groups, the computations in $K_0$ provide explicit generators that are matched by the Baum-Connes assembly map. 
\end{abstract}

\section{Introduction}

Let $G$ be a countable discrete group. The {\it Baum-Connes conjecture for $G$} posits an isomorphism:
\begin{Conj} (BC) The map
$$\mu_i^G: K_i^G(\underline{E}G)\rightarrow K_i(C^*_r(G))\qquad i=0,1$$
is an isomorphism.
\end{Conj}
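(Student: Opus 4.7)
The plan is to verify the Baum--Connes isomorphism for each of the three semidirect products $G$ listed in the abstract, and along the way to produce computable models for both sides. My first move is to reduce the \emph{existence} of the isomorphism $\mu_*^G$ to known results. Each linear part $H\in\{GL_2(\Z),SL_2(\Z),\Gamma(2)\}$ is virtually free (the modular group being $\Z/4*_{\Z/2}\Z/6$ up to index two), so $H$ contains a finite index free subgroup $H_0$; consequently $\Z^2\rtimes H_0$ acts properly on $\R^2\times T$ where $T$ is the Bass--Serre tree of $H_0$, and so has the Haagerup property. Since Haagerup passes between a group and a finite index subgroup, $G$ has it too, and Higson--Kasparov then yields the isomorphism $\mu_*^G$. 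This settles (BC) as an abstract statement; the substance of the paper lies in the explicit \emph{computation} of both sides.

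For the left-hand side I would use the three-dimensional model $\underline{E}G=\R^2\times T$ flagged in the abstract (proper and cocompact), and replace $K_*^G(\underline{E}G)$ by Bredon homology with coefficients in the complex representation ring functor, via the Davis--L\"uck / Mislin--Valette Atiyah--Hirzebruch type spectral sequence. Because the model has dimension three and the coefficients vanish in odd degrees, the spectral sequence degenerates and gives
\[
K_0^G(\underline{E}G)\cong H_0^{\mathrm{Br}}\oplus H_2^{\mathrm{Br}},\qquad K_1^G(\underline{E}G)\cong H_1^{\mathrm{Br}}\oplus H_3^{\mathrm{Br}}.
\]
Assembling the Bredon chain complex requires listing the cell stabilizers for the $G$-action on $\R^2\times T$ together with their normalizers and the induced maps on representation rings. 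Any finite subgroup of $G$ meets $\Z^2$ trivially, hence projects isomorphically to a finite subgroup of $H$; together with the full translation lattice it generates a wallpaper subgroup of $\R^2\rtimes H$, which is the crystallographic bookkeeping signalled in the abstract.

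For the right-hand side I would identify $C^*_r(G)$ with the crossed product $C(\mathbb{T}^2)\rtimes_r H$ via the Fourier transform on $\Z^2$. Since $H$ decomposes as an amalgamated product over a finite subgroup, I would exploit a Pimsner type Mayer--Vietoris sequence
\[
\cdots\ra K_*(C(\mathbb{T}^2)\rtimes_r H_e)\ra K_*(C(\mathbb{T}^2)\rtimes_r H_v)\oplus K_*(C(\mathbb{T}^2)\rtimes_r H_{v'})\ra K_*(C(\mathbb{T}^2)\rtimes_r H)\ra\cdots
\]
associated with $H=H_v*_{H_e}H_{v'}$, reducing the problem to the equivariant K-theory of $\mathbb{T}^2$ under the actions of the finite vertex and edge groups $H_v,H_{v'},H_e$, which in turn decomposes by fixed-point analysis.

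The main obstacle is the bookkeeping of finite subgroups. The conjugacy classes of finite subgroups of $G$, together with their normalizers, simultaneously parametrize the Bredon cell structure on the left-hand side and, via fixed point loci on $\mathbb{T}^2$, control the right-hand side. Listing them correctly for each of $GL_2(\Z)$, $SL_2(\Z)$ and $\Gamma(2)$, carrying out the crystallographic analysis case by case, and finally constructing explicit K-homology cycles whose images under $\mu_0^G$ match identifiable generators in $K_0(C^*_r(G))$ (as promised for the first and third groups) is where the technical weight of the argument will lie; a mismatch in either column would typically trace back to a missed or misidentified conjugacy class of finite subgroup.
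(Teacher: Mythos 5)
Your reduction of the \emph{existence} of the isomorphism to Higson--Kasparov is the one genuine gap, and it is fatal as stated. The groups $\Z^2\rtimes H$ with $H$ a non-abelian subgroup of $GL_2(\Z)$ do \emph{not} have the Haagerup property: already for a finite-index free subgroup $H_0$, the pair $(\Z^2\rtimes H_0,\Z^2)$ has the relative property (T) (Burger), which is incompatible with Haagerup; the paper states this explicitly for its three groups in the introduction. The faulty step is ``$\Z^2\rtimes H_0$ acts properly on $\R^2\times T$, and so has the Haagerup property'': the action on the $\R^2$ factor is affine with unbounded linear part, hence not isometric for the Euclidean metric, so the proper action on $\R^2\times T$ yields neither a proper isometric action of the kind that produces Haagerup nor a proper affine action on a Hilbert space. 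Properness of that action is precisely what makes $\R^2\times T$ a model for $\underline{E}G$ (Corollary \ref{modelforEunderbar}), but it says nothing about the Haagerup property, and consequently Higson--Kasparov cannot be invoked.

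The correct route to the abstract isomorphism --- and the one the paper takes --- is through group actions on trees: $G=\Z^2\rtimes H$ acts, via the quotient map onto $H\subset GL_2(\Z)$, on the Bass--Serre tree of $GL_2(\Z)\simeq D_4\ast_{D_2}D_6$, with vertex and edge stabilizers of the form $\Z^2\rtimes F$ with $F$ finite, i.e.\ amenable wallpaper groups; these satisfy BC with coefficients, and Oyono-Oyono's results (packaged in Proposition \ref{FromOyono}, whose proof intertwines his six-term sequence with Pimsner's sequence and applies the five lemma) then give BC for $G$, as in Proposition \ref{treeamenstab}. Apart from this first step, your outline does match the paper's strategy: Bredon homology of the $3$-dimensional model for the left-hand side and Pimsner's Mayer--Vietoris sequence for the amalgam of wallpaper groups on the right-hand side. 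One smaller caveat there: for a $3$-dimensional model, Mislin's theorem gives only a six-term exact sequence relating $K_*^G(\underline{E}G)$ to $H_0^{\mathfrak F},\dots,H_3^{\mathfrak F}$, not an automatic splitting $K_0^G(\underline{E}G)\cong H_0^{\mathfrak F}\oplus H_2^{\mathfrak F}$; the splittings used in the paper are extracted case by case from the freeness of the groups that occur, so your blanket degeneration claim needs that extra justification.
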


Here $\mu_G$ is the Baum-Connes assembly map, $K_i^G(\underline{E}G)$ denotes the $G$-equivariant K-homology of the classifying space $\underline{E}G$ of $G$-proper actions, and $K_i(C^*_r(G))$ denotes the topological K-theory of the reduced $C^*$-algebra of $G$: see \cite{BCH, GoJuVa} for background.

The Baum-Connes conjecture, and even the more general Baum-Connes conjecture with coefficients, have been established for large classes of groups: e.g. groups with the Haagerup property (a deep result by Higson-Kasparov \cite{HiKa}), or word-hyperbolic groups (another deep result by V. Lafforgue \cite{Laff}). Those results are proved by powerful methods which, in their generality, do not allow us to compute explicitly the abelian groups appearing in the conjecture.

In this paper we deal with some semi-direct products $\Z^2\rtimes H$ where $H$ is a finite-index subgroup of $GL_2(\Z)$. More precisely we study the following three groups:
\begin{enumerate}
\item $SA=:\Z^2\rtimes SL_2(\Z)$, the {\it special affine group} of $\Z^2$.
\item $GA=:\Z^2\rtimes GL_2(\Z)$, the {\it general affine group} of $\Z^2$.
\item $\Z^2\rtimes\Gamma(2)$,  where $\Gamma(2)$ is the {\it principal congruence subgroup of level 2} in $SL_2(\Z)$, i.e. the kernel of the reduction modulo 2:
$$\Gamma(2)=\ker(SL_2(\Z)\rightarrow SL_2(\Z/2\Z)).$$
It is a normal subgroup of index 6 in $SL_2(\Z)$. 
\end{enumerate}

We immediately point out that the Baum-Connes conjecture, in its more general version with coefficients, is known to hold for $\Z^2\rtimes H$, with $H\subset GL_2(\Z)$. Indeed the Haagerup property implies $\Z^2\rtimes H$ acts on a tree with amenable stabilizers, see also Proposition \ref{treeamenstab} below. So, we may appeal to a result of H. Oyono-Oyono \cite{Oyono}, who proved that the Baum-Connes conjecture with coefficients is stable under group actions on trees. More precisely: let $G$ be a group acting without inversion on a tree; then $G$ satisfies the Baum-Connes conjecture with coefficients if and only if vertex-stabilizers in $G$ do satisfy it.

\medskip
We were attracted to the three semi-direct products above basically for two reasons: 
\begin{itemize}
    \item They do not belong to the two large classes mentioned above: they are not hyperbolic (they contain $\Z^2$), and they do not have the Haagerup property (indeed the pair $(\Z^2\rtimes H,\Z^2)$ has the relative property (T), see \cite{Burger}, p. 62). 
    \item They contain torsion elements; this makes the computations more involved, as we have to work with equivariant K-homology of the classifying space for proper actions $\underline{E} G$, instead of the simpler ordinary K-homology of the usual classifying space $BG$, as would be the case for torsion-free group $G$.
    \footnote{The case of $\Z^2\rtimes H$ with $H$ torsion-free is of course also interesting, see a paper in preparation by the third author with Alexandre Zumbrunnen, which deals with the case when $H$ is a free group of rank 2. The results are especially complete when $H=S$, the Sanov subgroup of $SL_2(\Z)$, or $H=C$, the commutator subgroup of $SL_2(\Z)$.}. Therefore we use Bredon homology to compute equivariant K-homology (see Section \ref{BredonSA}).
\end{itemize}

For the three groups above, we compute explicitly both sides of the Baum-Connes conjecture \footnote{For $SA$, the K-theory of the reduced $C^*$-algebra had already been computed by S. Walters \cite{Walters}; we offer a different computation in Section \ref{RHS-AS}.}, and prove that they are abstractly isomorphic. Moreover, for $SA$ and $\Z^2\rtimes \Gamma(2)$ we check by hand that the assembly map $\mu_0$ is an isomorphism. We summarize our results in the following table: 

\bigskip
\begin{center}
\begin{tabular}{c|c|c|c}
  Group   & $K_0$ & $K_1$ & Results \\
  \hline
   $SA$  & $\Z^{14}$ & $\Z$ & \ref{LHSforSA}, \ref{RHSforSA}  \\
   $GA$ &  $\Z^{11}$ & 0 & \ref{LHSforGA}, \ref{RHSforGA}  \\
   $\Z^2\rtimes\Gamma(2)$ & $\Z^6$ & $\Z^{12}$ & \ref{LHSforZ2rtimesGamma(2)}, \ref{RHSforZ2rtimesGamma(2)} 
\end{tabular}
\end{center}

Let us describe briefly the methods we use in this paper. 
\begin{itemize}
    \item The existence of a 3-dimensional model for $\underline{E}G$ (see Corollary \ref{3Dmodel} below) allows for the computation of the LHS. So we may appeal to a result by G. Mislin (see Theorem \ref{Mislin} below) that reduces the computation of $K_i^G(\underline{E}G)$ to the computation of the Bredon homology $H_j^{\mathfrak{F}}(\underline{E}G,R_{\mathbb{C}})$ (with $0\leq j\leq 3$), as explained in Section \ref{BredonSA}.
    \item We observe that the groups in consideration act naturally on a tree with edge and vertex stabilizers being planar wallpaper groups: six out of the seventeen wallpaper groups do occur, namely the groups $\mathbf{p2}$, $\mathbf{p4}$, $\mathbf{p6}$, $ \mathbf{cmm}$, $\mathbf{p4m}$  and $\mathbf{p6m}$. Our Proposition \ref{FromOyono} allows to reduce computations of both sides of BC for our groups, to the case of the corresponding groups for wallpaper groups. 
    \item For the LHS of the assembly map for wallpaper groups, we take advantage of the extensive computations in \cite{Flores}. 
    
    \item For the RHS of the assembly map for wallpaper groups, we appeal to \cite{ELPW} for the K-theory of the $C^*$-algebra of $\mathbf{p2},\mathbf{p4},\mathbf{p6}$. But for $\mathbf{cmm}$, $\mathbf{p4m}$ and $\mathbf{p6m}$ we must perform the computations, that appear in Sections \ref{cmm}, \ref{p4m}, \ref{p6m} respectively. The results are then plugged into Pimsner's 6-terms exact sequence \cite{Pimsner} that relates K-theory of the reduced $C^*$-algebra of $G$ to K-theory of the reduced $C^*$-algebras of vertex- and edge-stabilizers. Observe that, since our groups act on a tree with amenable stabilizers, it follows from \cite{JuVa} that they are K-amenable. This means that, throughout the paper, we may replace reduced group $C^*$-algebras by maximal (or full) group $C^*$-algebras.

\end{itemize}

\medskip

The limit of our methods are displayed e.g. by the case of $SA$: the $K_1$-groups are infinite cyclic, but we do not have any good description of a generator, neither on the LHS nor on the RHS \footnote{However we have an explicit description of the images of the $K_1$-groups under the connecting maps in either the Mayer-Vietoris or the Pimsner exact sequences, and the connecting maps are isomorphisms.}.

The paper is structured as follows. Section 2 contains preliminaries on BC for groups acting on trees (see especially Proposition \ref{FromOyono}), on models of $\underline{E}H$ when $H$ is a discrete subgroup of the affine group of $\R^n$ (see especially Corollary \ref{modelforEunderbar}), on $SL_2(\Z)$ and $GL_2(\Z)$ viewed as amalgams, and on Bredon homology. Then we present our computations for $SA$ in Section 3, for $GA$ in Section 4, and for $\Z^2\rtimes \Gamma(2)$ in Section 5. We chose to present computations in that order because, for $SA$, the wallpaper groups involved, namely $\mathbf{p2},\mathbf{p4},\mathbf{p6}$, are extensions of $\Z^2$ by finite cyclic groups, so they are easier to handle.

\medskip
ACKNOWLEDGEMENTS: We thank Sven Raum and Alexandre Zumbrunnen for various valuable discussions. We also acknowledge helpful comments by Yago 
Antol\'{i}n, Mathieu Dutour Sikiri\'{c} and Ian Leary. We warmly thank Doris Schattschneider for the permission to reproduce the picture (Figure \ref{Doris}) of the patterns of the wallpaper groups that appear in the nice paper \cite{Sch78}. We are very grateful to the referee for an extremely careful reading of the first version, and detailed suggestions which greatly improved the exposition.

R.F. was supported by grants PID2020-117971GB-C21 of the Ministery of Science of Innovation of Spain and US-1263032 of the Junta de Andalucia. S.P. was supported by grant KAW 2020.0252 of the Knut and Alice Wallenberg Foundation.


\section{Preliminaries}

\subsection{BC and groups acting on trees}
In this subsection we extract  from Oyono's paper \cite{Oyono} the fact that, not only the Baum-Connes conjecture with coefficients, but also the ordinary Baum-Connes conjecture, is inherited by group actions on trees. This must be contrasted with the situation for direct products of two groups: it is known that the Baum-Connes conjecture with coefficients is inherited by direct products, but it is an open question whether the ordinary Baum-Connes conjecture is so.

\begin{Prop}\label{FromOyono} Let $G$ be a group acting without inversion on a tree. Assume that edge- and vertex-stabilizers satisfy $BC$, then $G$ satisfies $BC$.
\end{Prop}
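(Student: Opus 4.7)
My plan is to follow the strategy of Oyono-Oyono \cite{Oyono}, noting that his treatment of BC with coefficients specializes to the coefficient-free statement if one keeps track of trivial coefficients throughout. The starting point is Bass-Serre theory: because $G$ acts on the tree $T$ without inversion, $G$ is the fundamental group of a graph of groups with vertex groups $G_v$ (for $v$ running over a system of representatives of $V(T)/G$) and edge groups $G_e$ (for $e$ running over representatives of $E(T)/G$).

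The core of the argument is to build two six-term Mayer-Vietoris type exact sequences and compare them via the assembly map. On the topological side, since $T$ is 1-dimensional, a model for $\underline{E}G$ is produced as the pushout of $\bigsqcup_v G\times_{G_v}\underline{E}G_v$ along the midpoints of edges coming from $\bigsqcup_e G\times_{G_e}(\underline{E}G_e\times[0,1])$; the key input is that $G_v$ and $G_e$ act properly on their own $\underline{E}(\cdot)$. Taking equivariant $K$-homology yields a long exact sequence
$$\cdots \to \bigoplus_e K_i^{G_e}(\underline{E}G_e) \to \bigoplus_v K_i^{G_v}(\underline{E}G_v) \to K_i^G(\underline{E}G) \to \cdots .$$
On the analytic side, Pimsner's six-term exact sequence \cite{Pimsner} provides the parallel sequence
$$\cdots \to \bigoplus_e K_i(C^*_r(G_e)) \to \bigoplus_v K_i(C^*_r(G_v)) \to K_i(C^*_r(G)) \to \cdots .$$

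Next I would verify that the Baum-Connes assembly maps $\mu^G_i$, $\mu^{G_v}_i$, $\mu^{G_e}_i$ assemble into a commutative ladder between these two long exact sequences. This is a naturality statement for $\mu$ under induction along subgroup inclusions, and is a standard consequence of the $KK$-theoretic formalism that underlies the assembly map. Once the ladder is set up, the hypothesis that $\mu^{G_v}_i$ and $\mu^{G_e}_i$ are isomorphisms for $i=0,1$ allows the five lemma to conclude that $\mu^G_i$ is an isomorphism, which is exactly BC for $G$.

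The main obstacle is ensuring that Oyono-Oyono's construction, phrased in \cite{Oyono} with coefficient $C^*$-algebras, indeed restricts correctly to the trivial coefficient case and that the squares in the ladder genuinely commute up to sign. The crucial compatibility check is that the connecting maps produced by the topological pushout match those in Pimsner's sequence under $\mu$; this is where the heart of Oyono-Oyono's argument enters, and verifying it just amounts to rereading his $KK$-theoretic construction with the coefficient algebra set equal to $\mathbb{C}$. The remaining points---existence of the pushout model for $\underline{E}G$ and exactness of the resulting Mayer-Vietoris sequence---are essentially formal once one has the Bass-Serre tree and proper models for the stabilizer subgroups.
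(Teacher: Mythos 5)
Your proposal is correct and follows essentially the same route as the paper: both compare a topological six-term sequence with Pimsner's analytic one via the assembly maps, delegate the hard compatibility of the connecting maps to Oyono-Oyono's results specialized to trivial coefficients $A=\mathbb{C}$, and conclude with the five lemma. The only cosmetic difference is that you keep the outer terms as direct sums over vertex- and edge-stabilizers (via a pushout model of $\underline{E}G$), whereas the paper phrases them as Baum--Connes with coefficients in $C_0(X_0)$ and $C_0(X_1)$ using Oyono-Oyono's Theorem 3.2 and Proposition 4.5; the two formulations are identified by the standard induction isomorphisms.
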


{\bf Proof:} The proof can be extracted from the results in \cite{Oyono}. First, let $(G_i)_{i\in I}$ be a family of subgroups of $G$ that satisfy $BC$. Let $G$ act on a discrete space $Z$ in such a way that any stabilizer from $Z$ is conjugate to some $G_i$. Then by Theorem 3.2 in \cite{Oyono} (with $A=\mathbb C$), $G$ satisfies $BC$ with coefficients in $C_0(Z)$.

Now suppose that $G$ acts on a tree with vertex set $X_0$ and edge set $X_1$. By Proposition 4.5 in \cite{Oyono}, there is a 6-terms exact sequence

\begin{equation*}
    \xymatrix{
    KK_0^G(\underline{E}G,C_0(X_1)) \ar[r] &
    KK_0^G(\underline{E}G, C_0(X_0)) \ar[r] &
    K_0^G(\underline{E}G)\ar[d] \\
    K_1^G(\underline{E}G)\ar[u] &
    KK_1^G(\underline{E}G,C_0(X_0)) \ar[l] &
    KK_1^G(\underline{E}G,C_0(X_1)) \ar[l]
    }
\end{equation*}
such that the assembly maps intertwine this sequence and Pimsner's exact sequence \cite{Pimsner}:
\begin{equation*}
    \xymatrix{
    K_0(C_0(X_1)\rtimes_r G) \ar[r]&
    K_0(C_0(X_0)\rtimes_r G) \ar[r]&
    K_0(C^*_r(G)) \ar[d]\\
    K_1(C^*_r(G)) \ar[u] &
    K_1(C_0(X_0)\rtimes_r G) \ar[l]&
    K_1(C_0(X_1)\rtimes_r G) \ar[l]
    }
\end{equation*}

Assume that vertex- and edge-stabilizers satisfy $BC$, then $G$ satisfies $BC$ with coefficients in $C_0(X_0)$ and in $C_0(X_1)$, so the result follows from the five lemma.
\hfill$\square$

\subsection{The classifying space for proper actions for certain affine groups}\label{ExplicitClassifying}

Let $V$ be a finite-dimensional real vector space, and let $Aff(V)=V\rtimes GL(V)$ be its affine group. Let $\ell:Aff(V)\rightarrow GL(V):g\mapsto \ell(g)$ be the quotient homomorphism, so that $\ell(g)$ is the linear part of $g$. We write $b(g)$ for the translation part of $g$, so that $g v=\ell(g)v+b(g)$ for every $v\in V$.

Let $G$ be a countable subgroup of $Aff(V)$, $X$ a model for $\underline{E}\ell(G)$. We view $X$ as a $G$-space via the quotient map $\ell$. We endow $V\times X$ with the diagonal action.

\begin{Prop} Assume that $b(G)$ is closed and discrete in $V$. Then $V\times X$ is a model for $\underline{E}G$.
\end{Prop}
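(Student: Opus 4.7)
The plan is to verify the two defining properties of a model for $\underline{E}G$: the $G$-action on $V\times X$ should be proper, and for every finite subgroup $F\leq G$ the fixed-point set $(V\times X)^F$ should be non-empty and contractible. The second property will follow from the universal property of $X$ combined with the classical averaging argument for affine group actions; the first is where the hypothesis that $b(G)$ is closed and discrete enters.

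I first compute fixed sets. For $g\in G$, the equation $g\cdot(v,x)=(v,x)$ is equivalent to $gv=v$ and $\ell(g)x=x$, so for any finite $F\leq G$
$$(V\times X)^F=V^F\times X^{\ell(F)}.$$
The barycenter $v_0=|F|^{-1}\sum_{g\in F}g\cdot 0$ is an affine fixed point of $F$, so $V^F$ is a non-empty affine subspace of $V$ and hence contractible. Since $\ell(F)$ is a finite subgroup of $\ell(G)$ and $X$ is a model for $\underline{E}\ell(G)$, the set $X^{\ell(F)}$ is non-empty and contractible. The product of two contractible spaces is contractible, so the fixed-set condition holds.

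For properness, let $C\subset V$ and $K\subset X$ be compact, and consider $S=\{g\in G:g(C\times K)\cap(C\times K)\neq\emptyset\}$. Any $g\in S$ satisfies $\ell(g)K\cap K\neq\emptyset$, so properness of the $\ell(G)$-action on $X$ forces $\ell(S)$ to be finite. Fix $\gamma\in\ell(S)$ and a preimage $g_\gamma\in G$. Any $g\in G$ with $\ell(g)=\gamma$ can be written $g=tg_\gamma$ with $t\in G\cap V$, whence $b(g)=b(g_\gamma)+b(t)$; in particular the set of possible translation parts is the coset $b(g_\gamma)+b(G\cap V)\subseteq V$. Belonging to $S$ further requires $b(g)\in C-\gamma C$, which is compact. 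Since $b(G)$ is closed and discrete in $V$, so is its subgroup $b(G\cap V)$ and every translate thereof, and any closed discrete subset of $V$ meets a compact set in finitely many points. Summing over the finite set $\ell(S)$, $S$ itself is finite.

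The main obstacle is the properness step, since the fixed-point calculation is an immediate product decomposition. In the properness argument, the subtle point is to separate the linear and translation contributions: properness on $X$ handles the linear parts, while the hypothesis on $b(G)$ handles the translations. Without closed-discreteness of $b(G)$, translations in $G$ could accumulate in $V$ and destroy proper discontinuity on the $V$-factor, so the hypothesis is essentially sharp.
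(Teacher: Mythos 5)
Your proof is correct and follows essentially the same route as the paper: the product decomposition $(V\times X)^H=V^H\times X^{\ell(H)}$ together with the averaging argument for finite subgroups, and the two-step properness argument in which properness of $\ell(G)\curvearrowright X$ leaves only finitely many linear parts and closed-discreteness of $b(G)$ (intersected with a compact set of possible translation parts) leaves only finitely many translations for each of them. The only difference is cosmetic: the paper additionally checks by hand that $(V\times X)^H=\emptyset$ for infinite $H$, while in your version this is subsumed by properness (which forces finite point stabilizers), so nothing essential is missing.
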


{\bf Proof:} By definition/characterization of a model for $\underline E G$ (see for example \cite[Definition 0.1]{Lueck00}, we need to show that  $G$ acts properly on such a CW-complex and its fixed point sets are contractible when taken by finite subgroups of $G$ and trivial otherwise.
Let us first show properness of the action $G\curvearrowright V\times X$. Let $K\subset V, L\subset X$ be compact subsets. We must show that
$$\{g\in G \mid g(K\times L)\cap(K\times L)\neq\emptyset\}$$
is finite. This set equals
$$\left\{g\in G \mid gK\cap K\neq\emptyset\right\}
\cap
\left\{g\in G \mid \ell(g)L\cap L\neq\emptyset\right\}.$$
By properness of the action of $\ell(G)$ on $X$, finitely many $\ell(g)'s$ appear in the latter set. Assume $\ell(g)$ is fixed. Then $gK\cap K=(b(g)+\ell(g)K)\cap K$. Since $V$ acts properly on itself, the set $F=:\{v\in V \mid (v+\ell(g)K)\cap K\neq\emptyset\}$ is compact in $V$. By assumption $b(G)$ is closed and discrete, and therefore $b(G)\cap F$ is finite. So for given $\ell(g)$, we have finitely many possible $b(g)$'s.

Now let us show the statement concerning fixed point sets. Let $H$ be a subgroup of $G$. We have $(V\times X)^H=V^H\times X^{\ell(H)}$. Observe that $V^H$ is a (possibly empty) affine subspace of $V$.
\begin{itemize}
\item If $H$ is finite, then $V^H$ is non-empty (for every $v\in V$, the point $\frac{1}{|H|}\sum_{h\in H}h v$ is in $V^H$), so that $V^H$ is contractible. Since $X^{\ell(H)}$ is contractible, so is $(V\times X)^H$.
\item If $H$ is infinite, two cases may occur. If $\ell(H)$ is infinite, then $X^{\ell(H)}=\emptyset$. If $\ell(H)$ is finite, then $V\cap H$ is infinite, and $V^{V\cap H }=\emptyset$.
\end{itemize}
\hfill$\square$

As an immediate consequence we obtain:

\begin{Cor}\label{modelforEunderbar} Assume that $G$ is a semi-direct product $G=A\rtimes B$ with $A$ a discrete subgroup of $V$ and $B\subset GL(V)$ that leaves $A$ invariant, i.e. $B(A)=A$. Then a model for $\underline{E}G$ is given by $V\times\underline{E}B$.
\hfill$\square$
\end{Cor}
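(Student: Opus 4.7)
The plan is to derive this corollary as a direct application of the preceding proposition, so there is essentially no new work to do beyond unpacking definitions. The main task is to check that the hypotheses of the proposition are satisfied in this setting and to identify the relevant objects correctly.

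First, I would embed $G = A \rtimes B$ in $Aff(V) = V \rtimes GL(V)$ via the inclusion $A \hookrightarrow V$ on the translation factor and $B \hookrightarrow GL(V)$ on the linear factor. This embedding is a group homomorphism precisely because $B$ leaves $A$ invariant, so the semi-direct product structure on $G$ agrees with the one induced by restriction from $Aff(V)$. Under this embedding, the linear-part homomorphism $\ell : Aff(V) \to GL(V)$ restricts on $G$ to the projection $A \rtimes B \to B$, so that $\ell(G) = B$. Similarly, the translation part satisfies $b(G) = A$.

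Next, I would verify the hypothesis of the proposition: $b(G) = A$ is closed and discrete in $V$ by assumption. I would then take $X = \underline{E}B$ as our chosen model for $\underline{E}\ell(G) = \underline{E}B$ (which exists, e.g., by general constructions). Applying the proposition with these choices immediately yields that $V \times \underline{E}B$, equipped with the diagonal $G$-action (where $G$ acts on $V$ through the affine action and on $\underline{E}B$ through $\ell$, i.e., through the quotient $G \to B$), is a model for $\underline{E}G$.

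There is no real obstacle here, since the proposition does all the heavy lifting; the only point requiring attention is checking that the abstract semi-direct product $A \rtimes B$ is the same as the concrete subgroup of $Aff(V)$ obtained from the inclusions, which follows from $B(A) = A$. This is why the statement is offered only as an immediate corollary.
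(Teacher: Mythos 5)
Your proposal is correct and matches the paper exactly: the paper states the corollary as an immediate consequence of the preceding proposition, obtained precisely by noting that $\ell(G)=B$, $b(G)=A$, and that $A$, being a discrete subgroup of $V$, is automatically closed. Nothing further is needed.
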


\subsection{\texorpdfstring{$SL_2(\Z)$}{SL\_2(Z)} and \texorpdfstring{$GL_2(\Z)$}{GL\_2(Z)} as amalgamated products}\label{GL2amalgam}

We denote by $C_n$ the cyclic group of order $n$, and by $D_n$ the dihedral group of order $2n$ (so that $D_2\simeq C_2\times C_2$ is the \emph{Vierergruppe}). Set 
$$
S=\left(\begin{array}{cc}0 & -1 \\1 & 0\end{array}\right), \quad 
T=\left(\begin{array}{cc}1 & 1 \\0 & 1\end{array}\right), \quad
U=\left(\begin{array}{cc}1 & 0 \\1 & 1\end{array}\right), \quad
\varepsilon = 
\begin{pmatrix}
   -1 & 0 \\
   0 & -1 
\end{pmatrix}
,$$
and  $R=ST=\left(\begin{array}{cc}0 & -1 \\1 & 1\end{array}\right)$. It is well-known that $SL_2(\Z)$ is generated by $S$ and $T$. Moreover $S$ has order $4$, and $R$ has order 6, with $S^2=R^3=\varepsilon$: as in Example (c) on p. 52 of \cite{Serre}, this exhibits $SL_2(\Z)$ as the amalgamated product:
$$SL_2(\Z)\simeq C_4\ast_{C_2} C_6.$$

Since every matrix in $GL_2(\Z)\setminus SL_2(\Z)$ is uniquely the product of $\left(\begin{array}{cc}0 & 1 \\1 & 0\end{array}\right)$ with an element of $SL_2(\Z)$, and this matrix normalizes the factors in the amalgam decomposition of $SL_2(\Z)$, we also get an amalgamated product decomposition of $GL_2(\Z)$ (see Section 6 in \cite{BuTa}):
$$GL_2(\Z)\simeq D_4\ast_{D_2} D_6.$$

\begin{Prop}\label{treeamenstab} $\Z^2\rtimes GL_2(\Z)$ (and hence any of its subgroups) acts on a tree with stabilizers which are planar wallpaper groups. In particular the Baum-Connes conjecture BC holds for $\Z^2\rtimes GL_2(\Z)$ and any of its subgroups.
\end{Prop}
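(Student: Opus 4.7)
The plan is to leverage the amalgam decomposition $GL_2(\Z)\simeq D_4\ast_{D_2}D_6$ established just before the statement. By Bass--Serre theory, $GL_2(\Z)$ acts without inversion on its associated Bass--Serre tree $T$, with vertex stabilizers conjugate to $D_4$ or $D_6$ and edge stabilizers conjugate to $D_2$. Composing with the canonical projection $\pi:\Z^2\rtimes GL_2(\Z)\twoheadrightarrow GL_2(\Z)$, I get an action of $\Z^2\rtimes GL_2(\Z)$ on the same tree $T$, again without inversion, whose vertex and edge stabilizers are the preimages under $\pi$ of the corresponding stabilizers in $GL_2(\Z)$. Concretely, the vertex stabilizers are (up to conjugacy) $\Z^2\rtimes D_4$ and $\Z^2\rtimes D_6$, and the edge stabilizers are $\Z^2\rtimes D_2$, where in each case the finite group acts on $\Z^2$ by its natural embedding in $GL_2(\Z)$.

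Next I would observe that each such semidirect product $\Z^2\rtimes F$, with $F$ a finite subgroup of $GL_2(\Z)$, is by definition a (two-dimensional) crystallographic group, i.e.\ a planar wallpaper group. This gives the first assertion; in fact a straightforward check identifies the three groups appearing here as $\mathbf{cmm}$ (for $F=D_2$), $\mathbf{p4m}$ (for $F=D_4$) and $\mathbf{p6m}$ (for $F=D_6$), though only the weaker statement ``wallpaper group'' is needed. For any subgroup $H\leq \Z^2\rtimes GL_2(\Z)$, restricting the action yields an action of $H$ on $T$ without inversion, whose stabilizers are intersections of $H$ with wallpaper groups, hence are themselves subgroups of wallpaper groups (so still virtually abelian, which is all that will matter for BC).

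For the second assertion I would invoke Proposition \ref{FromOyono}: it suffices to check that all vertex- and edge-stabilizers of the $H$-action on $T$ satisfy BC. But all those stabilizers are virtually abelian, hence amenable, hence a-T-menable; thus they satisfy the Baum--Connes conjecture by the Higson--Kasparov theorem \cite{HiKa}. Proposition \ref{FromOyono} then yields BC for $H$, and in particular for $\Z^2\rtimes GL_2(\Z)$ itself.

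No step here is a serious obstacle: the amalgam decomposition is already recorded, Bass--Serre theory is standard, and the reduction to Higson--Kasparov through Proposition \ref{FromOyono} is exactly what that proposition was set up to do. The only point demanding a little care is verifying that passing from $GL_2(\Z)$ to $\Z^2\rtimes GL_2(\Z)$ (and then to an arbitrary subgroup) preserves the ``without inversion'' hypothesis and correctly identifies the stabilizers as extensions of finite subgroups of $GL_2(\Z)$ by $\Z^2$; this is immediate from the fact that the action factors through $\pi$ and that $\Z^2=\ker\pi$ acts trivially on $T$.
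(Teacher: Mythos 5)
Your proposal is correct and follows essentially the same route as the paper: pull back the action on the Bass--Serre tree of $GL_2(\Z)\simeq D_4\ast_{D_2}D_6$ through the quotient map, identify the stabilizers as (subgroups of) the wallpaper groups $\mathbf{cmm}$, $\mathbf{p4m}$, $\mathbf{p6m}$, and conclude via Proposition \ref{FromOyono} since these amenable stabilizers satisfy BC. Your treatment is in fact slightly more careful than the paper's, noting that for a proper subgroup $H$ the stabilizers are intersections with wallpaper groups rather than the full groups $\Z^2\rtimes F$.
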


{\bf Proof:} Let $T$ be the tree associated to the previous amalgamated decomposition of $GL_2(\Z)$ via Bass-Serre theory. Hence $GL_2(\Z)$ acts on $T$ with finite stabilizers. Through the quotient map $\Z^2\rtimes GL_2(\Z)\rightarrow GL_2(\Z)$, any subgroup of $\Z^2\rtimes GL_2(\Z)$ acts on $T$ with stabilizers of the form $\Z^2\rtimes F$, with $F$ a finite subgroup of $GL_2(\Z)$, and those semi-direct products are planar wallpaper groups. As Baum-Connes holds for these subgroups of $\Z^2\rtimes GL_2(\Z)$, the last statement follows from Proposition \ref{FromOyono}.
\hfill$\square$

\medskip
As it was mentioned in the Introduction:

\begin{Cor}\label{3Dmodel} Let $B$ be any subgroup of $GL_2(\Z)$. The semi-direct product $G=\Z^2\rtimes B$ admits a 3-dimensional model for $\underline{E}G$. If $B$ is not a torsion subgroup, there is no such model of dimension $<3$. 
\end{Cor}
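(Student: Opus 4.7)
\medskip\noindent\textbf{Proof plan.} The plan is to combine Corollary \ref{modelforEunderbar} with a one-dimensional model for $\underline{E}B$ coming from Bass--Serre theory, and then to prove optimality by exhibiting a torsion-free subgroup of cohomological dimension $3$.

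For the upper bound, I would take the Bass--Serre tree $T$ associated with the amalgam decomposition $GL_2(\Z)\simeq D_4\ast_{D_2}D_6$ of Section \ref{GL2amalgam}. Since $GL_2(\Z)$ acts on $T$ without inversion and the vertex and edge stabilizers are the finite groups $D_4,D_6,D_2$, the restricted action of any $B\subseteq GL_2(\Z)$ on $T$ is proper. For a finite subgroup $F\leq B$, the fixed set $T^F$ is a nonempty convex (hence contractible) subtree, by the classical fact that finite group actions on trees admit a fixed point; for an infinite subgroup $H\leq B$, the set $T^H$ is empty, since otherwise $H$ would embed into a finite vertex stabilizer. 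This identifies $T$ as a one-dimensional model for $\underline{E}B$, and applying Corollary \ref{modelforEunderbar} with $V=\R^2$ and $A=\Z^2$ exhibits $\R^2\times T$ as a three-dimensional model for $\underline{E}G$.

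For the lower bound, assume $B$ contains an element $b$ of infinite order, and set $H:=\Z^2\rtimes\langle b\rangle\cong\Z^2\rtimes\Z$. Then $H$ is torsion-free and polycyclic of Hirsch length $3$, hence of cohomological dimension $3$; concretely, $\R^3$ is a free, contractible $H$-space (with the $\R$-factor translated by $\langle b\rangle$), whose quotient is the aspherical mapping torus of the torus automorphism induced by $b$. If $X$ were any $d$-dimensional model for $\underline{E}G$, then $X$ is contractible and the restricted $H$-action is proper; since $H$ is torsion-free, properness forces the action to be free, so that $X/H$ is a $K(H,1)$ of dimension at most $d$. This yields $d\geq\mathrm{cd}(H)=3$.

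I expect the main technical point to be the identification of $T$ with $\underline{E}B$ for an arbitrary subgroup $B$ of $GL_2(\Z)$; it reduces to the classical fixed-point theorem, but requires invoking the full amalgam decomposition from Section \ref{GL2amalgam}. The lower bound is then a clean application of the general principle $\underline{\mathrm{gd}}(G)\geq\mathrm{cd}(H)$ for torsion-free subgroups $H\leq G$, combined with the standard computation of the cohomological dimension of $\Z^2\rtimes\Z$.
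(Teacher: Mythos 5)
Your proposal is correct and follows essentially the same route as the paper: the upper bound is obtained from Corollary \ref{modelforEunderbar} applied to $\R^2\times T$, with $T$ the Bass--Serre tree of the amalgam $GL_2(\Z)\simeq D_4\ast_{D_2}D_6$, and the lower bound comes from the subgroup $H\cong\Z^2\rtimes\Z$ of Hirsch length $3$. The only cosmetic difference is that you justify the lower bound directly via $\mathrm{cd}(H)=3$ (properness of the restricted action forcing freeness since $H$ is torsion-free), whereas the paper cites L\"uck's result that the Hirsch length bounds below the dimension of any model of the classifying space for proper actions; in this special case the two arguments amount to the same thing.
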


{\bf Proof:} We apply Corollary \ref{modelforEunderbar} with $A=\Z^2$. A model for $\underline{E}G$ is $\R^2\times\underline{E}B$, and we may take for $\underline{E}B$ the Bass-Serre tree $T$ from the proof of Proposition \ref{treeamenstab}.

To prove the second statement: as $B$ is not torsion, it contains subgroups isomorphic to $\Z$, and then the pullback of any of these subgroups by our extension gives subgroups $H\subset G$ that are polycyclic extensions $\Z^2\rightarrow H\rightarrow \Z$. Recall from \cite[Section 5.4.13]{Rob82} that the \emph{Hirsch length} or \emph{Hirsch rank} of a polycyclic group is the number of infinite factors any polycyclic series of the group. This number is a lower bound for the dimension of any model of the classifying space for proper actions of the group (see \cite[Example 5.26]{Lueck05}). Hence, as the Hirsch length of $H$ is 3, the minimal dimension of any model of $\underline{E}G$ is exactly 3.




\subsection{The left-hand side of BC: equivariant K-homology}\label{BredonSA}


We denote by  $\mathfrak{F}$ the family of finite subgroups of a given group. We refer to the excellent survey of Mislin in \cite{MV03} for more background concerning proper actions, classifying spaces for various families, Bredon homology and equivariant $K$-homology. Let us recall the results that will be crucial for our computations. The first concerns existence of a Mayer-Vietoris sequence in Bredon homology.

\begin{Thm}[\cite{MV03}, Corollary 3.32]

Let $\Gamma=H\ast_LK$, with $H$, $L$ and $K$ be arbitrary groups, and let $N\in \Gamma\textrm{-}\textrm{Mod}_{\mathfrak{F}}$. Then there exists a long exact sequence

$$
\ldots \rightarrow H_n^{\mathfrak{F}}(\underline{E} L,N) \rightarrow H_n^{\mathfrak{F}}(\underline{E} H,N)\oplus H_n^{\mathfrak{F}}(\underline{E} K,N)\rightarrow H_n^{\mathfrak{F}}(\underline{E}\Gamma,N) \rightarrow H_{n-1}^{\mathfrak{F}}(\underline{E} L,N)\rightarrow\ldots 
$$

\label{Vietoris}
\end{Thm}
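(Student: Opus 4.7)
The plan is to recognize this as a topological Mayer--Vietoris sequence coming from a pushout decomposition of a model for $\underline{E}\Gamma$, combined with an induction isomorphism identifying the terms with Bredon homologies of $\underline{E}H$, $\underline{E}K$, $\underline{E}L$.

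First, I would appeal to Bass--Serre theory: since $\Gamma = H\ast_L K$, the group $\Gamma$ acts without inversion on a tree $T$, with two orbits of vertices, stabilized by conjugates of $H$ and of $K$, and one orbit of edges, stabilized by conjugates of $L$. Choose $\Gamma$-CW models $\underline{E}H$, $\underline{E}K$, $\underline{E}L$ for the classifying spaces for proper actions of each factor. Since $L$ sits inside both $H$ and $K$, the universal property of classifying spaces for proper actions yields equivariant maps $\underline{E}L \to \underline{E}H$ and $\underline{E}L \to \underline{E}K$, unique up to equivariant homotopy. I would then form the double mapping cylinder
$$X := (\Gamma \times_H \underline{E}H) \;\cup\; \bigl(\Gamma \times_L (\underline{E}L \times [0,1])\bigr) \;\cup\; (\Gamma \times_K \underline{E}K),$$
glued along the natural inclusions at the two endpoints of the interval, and verify that $X$ is a valid model for $\underline{E}\Gamma$.

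Next, the pushout structure of $X$ induces a Mayer--Vietoris long exact sequence in Bredon homology with coefficients $N$:
$$\cdots \to H_n^{\mathfrak{F}}(\Gamma \times_L \underline{E}L, N) \to H_n^{\mathfrak{F}}(\Gamma \times_H \underline{E}H, N) \oplus H_n^{\mathfrak{F}}(\Gamma \times_K \underline{E}K, N) \to H_n^{\mathfrak{F}}(X, N) \to \cdots.$$
I would then apply the induction isomorphism: for any subgroup $J \leq \Gamma$ and any proper $J$-CW-complex $Y$, there is a natural isomorphism
$$H_n^{\mathfrak{F}}(\Gamma \times_J Y, N) \cong H_n^{\mathfrak{F}}(Y, N|_J),$$
where on the right the superscript $\mathfrak{F}$ denotes the family of finite subgroups of $J$ and $N|_J$ the restricted coefficient system. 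Applied with $J = H, K, L$ and $Y = \underline{E}H, \underline{E}K, \underline{E}L$, this rewrites the terms above as $H_n^{\mathfrak{F}}(\underline{E}H, N)$, $H_n^{\mathfrak{F}}(\underline{E}K, N)$, $H_n^{\mathfrak{F}}(\underline{E}L, N)$, producing the sequence claimed.

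The main obstacle is verifying that the pushout $X$ is genuinely a model for $\underline{E}\Gamma$: one must confirm that $\Gamma$ acts properly on $X$ and that $X^F$ is contractible for every finite $F \leq \Gamma$. This reduces to the tree-theoretic fact that a finite subgroup $F$ fixes a nonempty subtree of $T$: the fixed-point set $X^F$ decomposes correspondingly into pieces indexed by $F$-fixed cells of $T$, each piece being a fixed-point set in one of the chosen models $\underline{E}H$, $\underline{E}K$, $\underline{E}L$ and therefore contractible by hypothesis, while the gluing data respects fixed points, so that $X^F$ itself is contractible. The induction isomorphism is then standard, arising from the identification of the relevant orbit categories and giving a Shapiro-type identification of Bredon chain complexes.
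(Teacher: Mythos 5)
This statement is not proved in the paper: it is quoted verbatim from Mislin--Valette (\cite{MV03}, Corollary 3.32), with only a remark that the maps are induced by the group inclusions, so there is no in-paper argument to compare against line by line. Your proposal is essentially the standard proof underlying that reference: build a model of $\underline{E}\Gamma$ from the Bass--Serre tree of $H\ast_L K$ as a double mapping cylinder of induced spaces $\Gamma\times_H\underline{E}H \leftarrow \Gamma\times_L\underline{E}L \rightarrow \Gamma\times_K\underline{E}K$, apply the Mayer--Vietoris sequence in Bredon homology for this $\Gamma$-CW pushout, and identify the three terms via the induction (Shapiro-type) isomorphism $H_n^{\mathfrak{F}}(\Gamma\times_J Y,N)\cong H_n^{\mathfrak{F}}(Y,N|_J)$ for $J=H,K,L$. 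The two points that genuinely need checking --- that the double mapping cylinder is a model for $\underline{E}\Gamma$ (properness plus contractibility of $X^F$ for finite $F$, using that $F$ fixes a nonempty subtree of $T$ and that the fibers over fixed cells are fixed-point sets in $\underline{E}H$, $\underline{E}K$, $\underline{E}L$), and the induction isomorphism between orbit categories --- are exactly the ones you flag, and your sketches of them are correct, if compressed. So your route is sound and coincides in substance with the argument in \cite{MV03} (which organizes it via the cellular chain complex of the tree rather than an explicit double mapping cylinder, a cosmetic difference); the only caveat is that, as written, it is a proof outline whose technical steps are delegated to standard facts rather than carried out.
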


The maps of the previous sequences are described in the proof of Lemma 3.31 of \cite{MV03}, where it is also shown that the homomorphisms are induced by group inclusions. Thorough analysis of the functoriality of the Bredon (co)homology can be found in \cite[Chapter 2]{Flu11} or \cite[Chapter 11]{Lu23}.

 In order to describe  $K_*^ {\Gamma} (\underline{E}\Gamma)$ we need to compute the Bredon homology with coefficients in the complex representation ring functor $R_{\mathbb{C}}$, and then to make use of  Davis-L\"{u}ck \cite{DaLu98} version of the Atiyah-Hirzebruch spectral sequence to switch to equivariant $K$-homology. Usually, a manageable model for $\underline{E}\Gamma$ makes the computations more feasible. 
 
 \begin{Thm}[\cite{MV03}, Theorem 5.29]
\label{Mislin}
Let $\Gamma$ be a group such that there exists a model for $\underline{E}\Gamma$ of dimension 3. Then there is a natural exact sequence

\begin{equation*}
\xymatrix{ 0 \ar[r] & H_1^{\mathfrak{F}}(\underline{E}\Gamma,R_{\mathbb{C}}) \ar[r] & K_1^{\Gamma}(\underline{E}\Gamma) \ar[r] & H_3^{\mathfrak{F}}(\underline{E}\Gamma,R_{\mathbb{C}}) \ar[d] \\ 
0  & H_2^{\mathfrak{F}}(\underline{E}\Gamma,R_{\mathbb{C}}) \ar[l] & K_0^{\Gamma}(\underline{E}\Gamma) \ar[l] &
H_0^{\mathfrak{F}}(\underline{E}\Gamma,R_{\mathbb{C}}) \ar[l]
}
\end{equation*}

\label{BredonK}
\end{Thm}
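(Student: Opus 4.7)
The natural tool here is the Davis--L\"uck equivariant Atiyah--Hirzebruch spectral sequence associated with the proper action $\Gamma\curvearrowright \underline{E}\Gamma$. My plan is to set up this spectral sequence, show that only one differential can possibly be non-zero, and read off the claimed $7$-term exact sequence from the resulting $E^{\infty}$-page and filtration on $K_*^{\Gamma}(\underline{E}\Gamma)$.

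First I would write down the $E^2$-page:
$$E^2_{p,q} \;=\; H_p^{\mathfrak{F}}\bigl(\underline{E}\Gamma, R_{\mathbb{C}}^{\,q}\bigr) \;\Longrightarrow\; K_{p+q}^{\Gamma}(\underline{E}\Gamma),$$
where $R_{\mathbb{C}}^{\,q}$ is the coefficient system $\Gamma/H \mapsto K_q^H(\mathrm{pt})$. The two standing hypotheses immediately collapse most of this $E^2$-page: for $H$ finite, $K_q^H(\mathrm{pt})=R_{\mathbb{C}}(H)$ if $q$ is even and $0$ if $q$ is odd, so $E^2_{p,q}=0$ for $q$ odd and equals $H_p^{\mathfrak{F}}(\underline{E}\Gamma, R_{\mathbb{C}})$ for $q$ even; the existence of a $3$-dimensional model for $\underline{E}\Gamma$ forces $E^2_{p,q}=0$ for $p\notin\{0,1,2,3\}$. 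The nonzero part of $E^2$ is therefore supported on a $4\times\mathbb{Z}$-strip (even rows only), which is exactly the setting of Mislin's result.

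Next I would chase the differentials $d_r\colon E^r_{p,q}\to E^r_{p-r,q+r-1}$. A differential can only be non-zero if both its source and target lie on rows of even $q$; since $d_r$ changes parity of $q$ iff $r$ is even, all even differentials vanish, and in particular $E^3=E^2$. For $d_3\colon E^3_{p,q}\to E^3_{p-3,q+2}$ the source/target dimension constraint $0\leq p,\,p-3\leq 3$ forces $p=3$, so the only potentially non-zero differential (up to Bott periodicity) is
$$d_3\colon H_3^{\mathfrak{F}}(\underline{E}\Gamma,R_{\mathbb{C}}) \longrightarrow H_0^{\mathfrak{F}}(\underline{E}\Gamma,R_{\mathbb{C}}).$$
All higher $d_r$ vanish for range reasons, so $E^4=E^{\infty}$, with $E^{\infty}_{0,0}=\operatorname{coker}(d_3)$, $E^{\infty}_{3,-2}=\ker(d_3)$, and $E^{\infty}_{1,0}=H_1^{\mathfrak{F}}$, $E^{\infty}_{2,-2}=H_2^{\mathfrak{F}}$ unchanged.

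Finally I would unwind the filtration. For $K_0^{\Gamma}(\underline{E}\Gamma)$ the only non-zero associated graded pieces are in columns $p=0$ and $p=2$, giving a short exact sequence
$$0 \to \operatorname{coker}(d_3) \to K_0^{\Gamma}(\underline{E}\Gamma) \to H_2^{\mathfrak{F}}(\underline{E}\Gamma,R_{\mathbb{C}}) \to 0;$$
for $K_1^{\Gamma}(\underline{E}\Gamma)$ the contributing columns are $p=1$ and $p=3$, giving
$$0 \to H_1^{\mathfrak{F}}(\underline{E}\Gamma,R_{\mathbb{C}}) \to K_1^{\Gamma}(\underline{E}\Gamma) \to \ker(d_3) \to 0.$$
Splicing these two short exact sequences along the map $d_3\colon H_3^{\mathfrak{F}}\to H_0^{\mathfrak{F}}$ produces precisely the $7$-term sequence of the statement. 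Naturality comes for free from the naturality of the Davis--L\"uck spectral sequence. The main obstacle is really bookkeeping: correctly identifying which entries of $E^2$ are non-zero and checking that no $d_r$ with $r\geq 5$ can act non-trivially on the relevant diagonals; everything else is a mechanical consequence of the two collapsing inputs (even-row support and horizontal width $\leq 3$).
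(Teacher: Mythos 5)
Your argument is correct and is essentially the standard one: the paper does not reprove this statement but cites Mislin (\cite{MV03}, Theorem 5.29), whose proof is exactly the Davis--L\"uck Atiyah--Hirzebruch spectral sequence argument you give, with even-row support from Bott periodicity, horizontal support in columns $0$--$3$ from the $3$-dimensional model, the single surviving differential $d_3\colon H_3^{\mathfrak{F}}\to H_0^{\mathfrak{F}}$, and the splicing of the two resulting short exact sequences. Your bookkeeping of the differentials and of the filtration on $K_0^{\Gamma}$ and $K_1^{\Gamma}$ is accurate, so nothing needs to be added.
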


Note that by Corollary \ref{3Dmodel} there is a 3-dimensional model for the groups under consideration in this paper. Therefore the above theorem will be our main tool to describe the K-homology groups. 

\section {K-homology and K-theory for \texorpdfstring{$SA=\Z^2\rtimes SL_2(\Z)$}{SA=Z\textasciicircum x SL\_2(Z)}}

We denote by $SA$ the semi-direct product $\Z^2 \rtimes SL_2(\Z)$, where every element of the free abelian group is identified with a vertical vector $\binom{x}{y}$ and the (left) action is by product of matrices. 
\subsection{Bredon homology and equivariant K-homology of \texorpdfstring{$SA$}{SA}}

In this section we compute the Bredon homology of $SA$, and as a consequence the equivariant $K$-homology of $\underline{E}SA$. The main reference for this section is \cite{Flores}, where Bredon homology of wallpaper groups is computed. 

It was already recalled in Subsection \ref{GL2amalgam} that $SL_2(\Z)$ is isomorphic to the amalgamated free product  $SL_2(\Z)=C_4\ast_{C_2} C_6$, with
$C_4 = \langle S\rangle$ and $C_6 = \langle R \rangle$ over $C_2 = \langle \varepsilon\rangle$ via the identification $S^2=R^3=\varepsilon$, where
\[
\varepsilon = 
\begin{pmatrix}
   -1 & 0 \\
   0 & -1 
\end{pmatrix}
,\quad
S=
\begin{pmatrix}
   0 & -1 \\
   1 & 0  
\end{pmatrix}
,\quad
R=
\begin{pmatrix}
  0 & -1 \\
   1 & 1  
\end{pmatrix}.
\]
Moreover, the pullback of the diagram $ \langle \varepsilon \rangle \hookrightarrow SL_2(\Z) \leftarrow  SA$ is isomorphic to $\mathbf{p2}$, the pullback of $ \langle S \rangle \hookrightarrow SL_2(\Z) \leftarrow  SA$ is isomorphic to $\mathbf{p4}$, and the pullback of $ \langle R \rangle \hookrightarrow SL_2(\Z) \leftarrow  SA$ is isomorphic to $\mathbf{p6}$. Now, the inclusions $\mathbf{p4}\hookrightarrow SA$ and $\mathbf{p6}\hookrightarrow SA$ of the pullbacks induce a homomorphism $\mathbf{p4}\ast \mathbf{p6}\rightarrow SA$ which is surjective and factors through another homomorphism $\mathbf{p4}\ast_{\mathbf{p2}} \mathbf{p6}\rightarrow SA$. It is then routine to check that the latter is an isomorphism.

Taking into account that Bredon homology of wallpaper groups is trivial above dimension 2, and the fact that there is a model of $\underline{E}G$ of dimension 3 (Corollary \ref{3Dmodel}), the Mayer-Vietoris sequence of Theorem \ref{Vietoris} reduces to

\begin{equation} \label{MV-SA}
\footnotesize{
\begin{tikzcd}
  &  
  \hskip 13em 0
  \rar 
  &  
  H_3^{\mathfrak F}(\underline{E}SA,R_{\mathbb{C}}) 
   \ar[draw=none]{d}[name=X, anchor=center]{}
 \ar[rounded corners,
            to path={ -- ([xshift=2ex]\tikztostart.east)
                      |- (X.center) \tikztonodes
                      -| ([xshift=-2ex]\tikztotarget.west)
                      -- (\tikztotarget)}]{dll}[at end,above]{} 
                      \\    
  H_2^{\mathfrak F}(\underline{E}\mathbf{p2},R_{\mathbb{C}})
  \rar
  & 
  H_2^{\mathfrak F}(\underline{E}\mathbf{p4},R_{\mathbb{C}})\oplus H_2^{\mathfrak F}(\underline{E}\mathbf{p6},R_{\mathbb{C}})
  \rar
  &
  H_2^{\mathfrak F}(\underline{E}SA,R_{\mathbb{C}})
             \ar[draw=none]{d}[name=X, anchor=center]{}
 \ar[rounded corners,
            to path={ -- ([xshift=2ex]\tikztostart.east)
                      |- (X.center) \tikztonodes
                      -| ([xshift=-2ex]\tikztotarget.west)
                      -- (\tikztotarget)}]{dll}[at end,above]{} 
    \\                 
  H_1^{\mathfrak F}(\underline{E}\mathbf{p2},R_{\mathbb{C}})
  \rar
  &
  H_1^{\mathfrak F}(\underline{E}\mathbf{p4},R_{\mathbb{C}})
  \oplus 
  H_1^{\mathfrak F}(\underline{E}\mathbf{p6},R_{\mathbb{C}})
  \rar
  &
  H_1^{\mathfrak F}(\underline{E}SA,R_{\mathbb{C}})
  			 \ar[draw=none]{d}[name=X, anchor=center]{}
  \ar[rounded corners,
            to path={ -- ([xshift=2ex]\tikztostart.east)
                      |- (X.center) \tikztonodes
                      -| ([xshift=-2ex]\tikztotarget.west)
                      -- (\tikztotarget)}]{dll}[at end,above]{}
                      \\
 H_0^{\mathfrak F}(\underline{E}\mathbf{p2},R_{\mathbb{C}})
 \rar
 &
 H_0^{\mathfrak F}(\underline{E}\mathbf{p4},R_{\mathbb{C}})
 \oplus 
 H_0^{\mathfrak F}(\underline{E}\mathbf{p6},R_{\mathbb{C}})
 \rar
 &
 H_0^{\mathfrak F}(\underline{E} SA,R_{\mathbb{C}})
\rar
&
0
\end{tikzcd}
} 
\end{equation}

For the sake of completeness, we recall in Table \ref{Bredon homology} the Bredon homology of the three wallpaper groups involved in the structure of $SA$, and include a picture of the fundamental domains of all these groups. We refer to Section 3 in \cite{Flores} for detailed descriptions of $H_*^{\mathfrak{F}}$ and as well as representations and the cell structure of the models for $\mathbf{p2}$, $\mathbf{p4}$ and $\mathbf{p6}$. We remark anyhow the following facts for the benefit of the reader.

\begin{figure}
\label{Doris}
\includegraphics{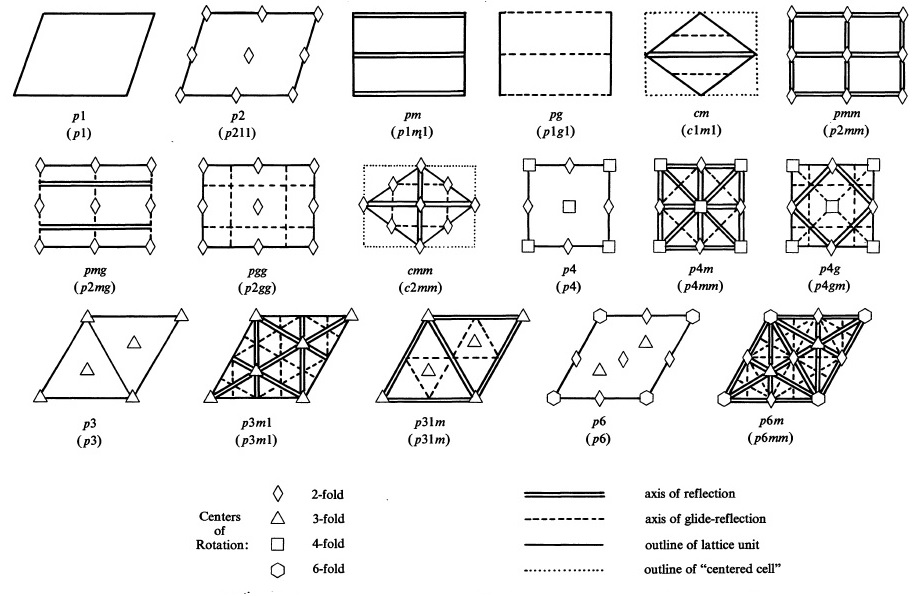}
\caption{Patterns for the wallpaper groups}
\end{figure}

\begin{itemize}

\item For $\mathbf{p2}$, there are three classes $e_i$, $0\leq i\leq 2$ of 0-equivariant cells whose stabilizers are isomorphic to $C_2$. Then $\alpha_i^j$ denotes a generator of the copy of $R_{\C}(C_2)$ corresponding to $e_i$ (recall that the representation ring of the cyclic group $C_n$ is isomorphic to $\Z^n$).

\item For $\mathbf{p4}$, there are two classes $e_0$ and $e_2$ of 0-equivariant cells whose stabilizers are isomorphic to $C_4$, and one class $e_1$ whose stabilizer is isomorphic to $C_2$. Then $\alpha_i^j$ denotes a generator of the copy of $R_{\C}(C_4)$ corresponding to $e_i$, $i=0,2$, and $\alpha_1^j$ denotes a generator of the copy of $R_{\C}(C_2)$ corresponding to $e_1$.

\item For $\mathbf{p6}$, there are three classes $e_0$ and $e_2$ $e_i$, $0\leq i\leq 2$ of 0-equivariant cells whose stabilizers are respectively isomorphic to $C_6$, $C_3$ and $C_2$ Then $\alpha_0^j$ denotes a generator of the copy of $R_{\C}(C_6)$ corresponding to $e_0$, $\alpha_1^j$ denotes a generator of the copy of $R_{\C}(C_3)$ corresponding to $e_1$, and $\alpha_1^j$ denotes a generator of the copy of $R_{\C}(C_2)$ corresponding to $e_2$.

\end{itemize}

In all of the characters with the superscript $0$ stands for the trivial representation.

\begin{table}[t]
\begin{center}
\begin{tabular}{| c | c | c | c | c |}
\hline
Group & $H_2^{\mathfrak{F}}$ & $H_1^{\mathfrak{F}}$ & $H_0^{\mathfrak{F}}$ & Basis $H_0^{\mathfrak{F}}$ \\ \hline

$\mathbf{p2} $  & $ \Z $ & $ 0 $ & $ \Z^5 $ & $ ([\alpha_0^1], [\alpha_0^2], [\alpha_1^2], [\alpha_2^2], [\alpha_3^2]) $\\ \hline
$\mathbf{p4} $  & $ \Z $ & $ 0 $ & $ \Z^8 $ & $ ([\alpha_0^1],[\alpha_0^2],[\alpha_0^3],[\alpha_0^4],[\alpha_1^2],[\alpha_2^2],[\alpha_2^3],[\alpha_2^4]) $\\ \hline
$\mathbf{p6} $ & $ \Z $ & $ 0 $  & $ \Z^9 $ & $ ([\alpha_0^2],[\alpha_0^3],[\alpha_0^4],[\alpha_0^5],[\alpha_0^6],[\alpha_1^2],[\alpha_1^3],[\alpha_2^1],[\alpha_2^2]) $\\ \hline

\end{tabular}
\caption{Bredon homology}
\label{Bredon homology}
\end{center}
\end{table}

The description of the Bredon homology groups of wallpaper groups in Table \ref{Bredon homology} and in particular the vanishing of $H_1^{\mathfrak F}$ cut the long exact sequence \eqref{MV-SA} into two short exact sequences that we need to analyze. We start with the top degree:
\begin{equation} \label{2-degree-SA}
    0\rightarrow H_3^{\mathfrak F}(\underline{E}SA,R_{\mathbb{C}})\rightarrow \Z\rightarrow \Z^2 \rightarrow H_2^{\mathfrak F}(\underline{E}SA,R_{\mathbb{C}}) \rightarrow 0.
\end{equation}

Thanks to Lemma 3.21 in \cite{MV03}, the middle homomorphism $ \Z\rightarrow \Z^2$ represents the Mayer-Vietoris homomorphism in ordinary homology $H_2(\mathbf{p2},\Z)\rightarrow H_2(\mathbf{p4},\Z)\oplus H_2(\mathbf{p6},\Z)$ induced by the inclusions $\mathbf{p2}\hookrightarrow \mathbf{p4}$ and $\mathbf{p2}\hookrightarrow \mathbf{p6}$. 
The first inclusion induces a map $S^2\rightarrow S^2$ (where the spheres in the left and the right are seen as models for the orbit spaces $\underline{B}\mathbf{p2}$ and $\underline{B}\mathbf{p4}$, respectively), which is a branched 2-fold cover of the sphere by itself, hence induces multiplication by 2 between the second homology groups.
Analogously, the inclusion $\underline{B}\mathbf{p2}$ and $\underline{B}\mathbf{p6}$ induces multiplication by 3. 
Then $H_2^{\mathfrak F}(\underline{E}SA,R_{\mathbb{C}})$ appears as the cokernel of the homomorphism $ \Z\rightarrow \Z^2$ that takes the generator of $\Z$ to the element $(2,-3)\in \Z^2$, and is isomorphic to $\Z$. 
In turn, $H_3^{\mathfrak F}(\underline{E} SA,R_{\mathbb{C}})$ is the kernel of an injective homomorphism, hence trivial. The negative sign comes from the definition of Mayer-Vietoris sequence.

We now focus on the bottom row of the Mayer-Vietoris exact sequence \eqref{MV-SA}. 
Substituting information from Table \ref{Bredon homology}, we obtain

\begin{equation} \label{0-degree-SA}
    0\rightarrow H_1^{\mathfrak F}(\underline{E}SA,R_{\mathbb{C}})\rightarrow \Z^5\stackrel{f}{\rightarrow} \Z^8\oplus \Z^9\stackrel{g}{\rightarrow} H_0^{\mathfrak F}(\underline{E}SA,R_{\mathbb{C}})\rightarrow 0
\end{equation}

\noindent
where the kernel and the cokernel of $f$ are the desired Bredon homology groups of $G$.

We are led to study the homomorphisms induced in $H^{\mathfrak{F}}_0$ by the inclusions $\mathbf{p2}<\mathbf{p4}$ and $\mathbf{p2}<\mathbf{p6}$. 
As mentioned earlier, we will rely on the results of Section 3.10 in \cite{Flores}. Consider the $\mathbf{p4}$-CW-structure of $\mathbb{R}^2$ that turns the plane into a model for $\underline{E}\mathbf{p4}$. 
As $\mathbf{p2}<\mathbf{p4}$, this structure gives in particular a model for $\underline{E}\mathbf{p2}$, with the induced action from $\mathbf{p2}$. 
With this $\mathbf{p2}$-CW-structure, there are exactly four representatives $v^0_0$, $v^1_0$, $v^2_0$ and $v^3_0$ of equivariant 0-cells, corresponding to the vertices $O$, $P$, $Q$ and $R$. As these vertices are 2-rotation centers, their stabilizers for the $\mathbf{p2}$-action are all isomorphic to $C_2$. In turn, the stabilizers of $e_0^0$, $e_0^1$ and $e_0^2$ are, respectively, isomorphic to $C_4$, $C_2$ and $C_4$. See Figure 1.

Observe that the homomorphism $C_0^{\mathfrak{F}}(\underline{E}\mathbf{p2},R_{\mathbb{C}})\rightarrow C_0^{\mathfrak{F}}(\underline{E}\mathbf{p4},R_{\mathbb{C}})$ induced by the inclusion $\mathbf{p2}<\mathbf{p4}$  is already described as a homomorphism 
$$
 \bigoplus_{i=0}^3 R_{\mathbb{C}}(stab(v_0^i))\rightarrow \bigoplus_{i=0}^2 R_{\mathbb{C}}(stab(e_0^i))
$$ 
defined linearly by induction of representations. 
Also remark that the map $\underline{E}\mathbf{p2}\rightarrow \underline{E}\mathbf{p4}$ induced by the inclusion maps the classes of $0$-equivariant cells $v_0^1$ and $v_0^3$ to the class of $e_0^1$ in $\underline{E}\mathbf{p4}$. 
In turn, the class $v_0^0$ is mapped to the class of $e_0^0$ and the class of $v_0^2$ to the class of $e_0^2$. The inclusion between the stabilizers is trivial in the cases of $v_0^1$ and $v_0^3$, and is given by $C_2<C_4$ in the other two cases.

Now for each $v_0^i$, $0\leq i\leq 3$, we denote by $(\delta_i^1, \delta_i^2)$ the basis for irreducible characters of $R_{\mathbb{C}}(stab(v_0^i))$, with $\delta_i^1$ being the one corresponding to the trivial representation. For the characters $\alpha_j^i$, the identification of the representation rings (appeared in Table 2 in \cite{Flores}) of the previous homomorphism permits to express it as 
$$
\bigoplus_{i=0}^3 (\Z\delta_i^1\oplus\Z\delta_i^2)\rightarrow  \bigoplus_{i=1}^4\Z\alpha_0^i\bigoplus_{i=1}^2\Z\alpha_1^i\bigoplus_{i=1}^4\Z\alpha_2^i.
$$
Observe that explicit bases of $H_0^{\mathfrak{F}}(\underline{E}\mathbf{p2},R_{\mathbb{C}})$ and $H_0^{\mathfrak{F}}(\underline{E}\mathbf{p4},R_{\mathbb{C}})$ are given in Table \ref{Bredon homology}, just replacing $\alpha$ by $\delta$ in the case of $\mathbf{p2}$.

Now using the previous description of the cell structures of $\underline{E}\mathbf{p2}$ and $\underline{E}\mathbf{p4}$, the knowledge of the induction homomorphisms from Table 3 in \cite{Flores} and the description of $\Phi_1$ from sections 3.2 and 3.10 in \cite{Flores}, it can be seen that the matrix associated with the homomorphism $H_0^{\mathfrak{F}}(\underline{E}\mathbf{p2},R_{\mathbb{C}})\rightarrow H_0^{\mathfrak{F}}(\underline{E}\mathbf{p4},R_{\mathbb{C}})$ is:

$$
\begin{pmatrix}
1 & 0 & 0 & 0 & 0 \\
1 & 0 & 0 & 0 & 0 \\
0 & 1 & 0 & 0 & 0 \\
0 & 1 & 0 & 0 & 0 \\
0 & 0 & 1 & 0 & 1 \\
0 & 0 & 0 & 0 & 0 \\
0 & 0 & 0 & 1 & 0 \\
0 & 0 & 0 & 1 & 0
\end{pmatrix}
$$

Recall that here we are using the bases described in the second line of Table \ref{Bredon homology} for the 5-dimensional column space and in the third line of the same table for the 8-dimensional row space. Of course, we follow the ordering there in each of the basis. 


A similar argument can be applied in order to describe the homomorphism $H_0^{\mathfrak{F}}(\underline{E}\mathbf{p2},R_{\mathbb{C}})\rightarrow H_0^{\mathfrak{F}}(\underline{E}\mathbf{p6}, R_{\mathbb{C}})$ induced by inclusion. Let us consider a model $X$ of $\underline{E}{\mathbf{p6}}$ as a model for $\underline{E}\mathbf{p2}$; there will be five classes $v_0^0$, $v_0^1$, $v_0^2$, $v_0^3$ and $v_0^4$ of representatives of $\mathbf{p2}$-equivariant 0-cells in $X$.
The cells $v_0^0$, $v_0^2$ and $v_0^3$ correspond to the points $O$, $P$ and $R$ in the description of Section 3.16 in \cite{Flores}. Moreover, $v_0^1$ corresponds to the middle point of the left-side of the (big) polygon in the corresponding picture of Figure 1, and finally $v_0^4$ to the point marked with a small triangle in the left of the picture.  
Now the map $\underline{E}\mathbf{p2}\rightarrow \underline{E}\mathbf{p6}$ given by the identity sends $v_0^0$ to $e_0^0$, $v_0^1$, $v_0^2$ and $v_0^3$ to $e_0^2$ and $v_0^4$ to $e_0^1$ (following the notation of \cite[Section 3.16]{Flores} for $e_j^i$). By \cite[Section 3.16]{Flores}, the stabilizers of the $v_0^i$ are isomorphic to $C_2$ for $i\leq 3$, while the stabilizer of  $v_0^4$ is trivial. In turn, the stabilizers of $e_0^0$, $e_0^1$ and $e_0^2$  can be found in Section 3.16 in \cite{Flores}, and are respectively isomorphic to $C_6$, $C_3$ and $C_2$.

We denote by $\delta_j^i$ the irreducible characters that generate $R_{\mathbb{C}}(stab(v_0^j))$, and maintain the notation of that Section 3.16 for the irreducible characters which generate $R_{\mathbb{C}}(stab(e_0^i))$. In view of that, the chain homomorphism in Bredon 0-homology 
is
$$ 
\bigoplus_{i=1}^2\Z\delta_0^i\bigoplus_{i=1}^2\Z\delta_1^i\bigoplus_{i=1}^2\Z\delta_2^i\bigoplus_{i=1}^2\Z\delta_3^i\bigoplus \delta_4^1\rightarrow  \bigoplus_{i=1}^6\Z\alpha_0^i\bigoplus_{i=1}^3\Z\alpha_1^i\bigoplus_{i=1}^2\Z\alpha_2^i.
$$
The characters with the superscript 1 are associated to the trivial representation, and the induction homomorphisms can be found in the Table 3 in \cite{Flores}. 
In turn, bases for the Bredon 0-homology groups of \textbf{p2} and \textbf{p6} are provided in Table \ref{Bredon homology}  ($\alpha$ should be replaced by $\delta$ in the case of $\mathbf{p2}$). Observe that according to this the image of $\delta_4^1$ is irrelevant in homology. Putting together all the information above, we obtain the matrix of the homomorphism induced in  $H_0^{\mathfrak{F}}$ in these bases 


$$
\begin{pmatrix}

 -1 &  1 & 0  & 0  & 0 \\
 -1 &  1 & 0  & 0  & 0 \\
 -1 &  1 & 0  & 0  & 0 \\
 0 &  0 & 0  & 0  & 0 \\
 0 &  0 & 0  & 0  & 0 \\
 0 &  0 & 0  & 0  & 0 \\
 0 &  0 & 0  & 0  & 0 \\
 1 &  0 & 0  &  0 & 0 \\
 1 & 0  & 1  & 1  & 1
  \end{pmatrix}
$$

Stacking on top of each other the two components of the Mayer-Vietoris homomorphism $f$ in \eqref{0-degree-SA} and respecting the change of sign in the second component, we obtain

$$
\begin{pmatrix}
 1 & 0 & 0 & 0 & 0 \\
1 & 0 & 0 & 0 & 0 \\
0 & 1 & 0 & 0 & 0 \\
0 & 1 & 0 & 0 & 0 \\
0 & 0 & 1 & 0 & 1 \\
0 & 0 & 0 & 0 & 0 \\
0 & 0 & 0 & 1 & 0 \\
0 & 0 & 0 & 1 & 0 \\
 1 &  -1 & 0  & 0  & 0 \\
 1 &  -1 & 0  & 0  & 0 \\
 1 &  -1 & 0  & 0  & 0 \\
 0 &  0 & 0  & 0  & 0 \\
 0 &  0 & 0  & 0  & 0 \\
 0 &  0 & 0  & 0  & 0 \\
 0 &  0 & 0  & 0  & 0 \\
 -1 &  0 & 0  &  0 & 0 \\
 -1 & 0  & -1  & -1  & -1
  \end{pmatrix}
$$

The invariant factors of the Smith normal form of this matrix are $(1,1,1,1)$.
This implies that the kernel of the connecting homomorphism $f$ from the exact sequence (\ref{0-degree-SA})
is a free abelian group of rank 1, and the cokernel is free abelian of rank 13.  Combining the information extracted from analyzing the exact sequences \eqref{2-degree-SA} and \eqref{0-degree-SA} obtained from the Mayer-Vietoris long exact sequence \eqref{MV-SA}, it is obtained

\begin{Prop}\label{BredonforSA}
The only non-trivial Bredon homology groups of $\underline{E} SA$ are 
$$H_0^{\mathfrak F}(\underline{E} SA,R_{\mathbb{C}})=\mathbb{Z}^{13}, \quad 
H_1^{\mathfrak F}(\underline{E} SA, R_{\mathbb{C}})=\mathbb{Z}, \quad \text{and} \quad
H_2^{\mathfrak F}(\underline{E} SA,R_{\mathbb{C}})=\mathbb{Z}.$$
\end{Prop}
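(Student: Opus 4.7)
The plan is to exploit the amalgam decomposition $SA \simeq \mathbf{p4}\ast_{\mathbf{p2}}\mathbf{p6}$ established above the statement, and feed it into the Bredon Mayer--Vietoris sequence of Theorem \ref{Vietoris}. Since Bredon homology of wallpaper groups vanishes above degree 2 (Table \ref{Bredon homology}) and a 3-dimensional model exists by Corollary \ref{3Dmodel}, the long exact sequence truncates and, crucially, the vanishing of $H_1^{\mathfrak F}$ for $\mathbf{p2},\mathbf{p4},\mathbf{p6}$ splits it into two short exact sequences: one relating $H_3^{\mathfrak F}(\underline{E}SA)$ and $H_2^{\mathfrak F}(\underline{E}SA)$ to the top degrees of the wallpaper groups, and one relating $H_1^{\mathfrak F}(\underline{E}SA)$ and $H_0^{\mathfrak F}(\underline{E}SA)$ to the bottom degrees.

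For the top piece, I would invoke Lemma 3.21 of \cite{MV03} to identify the connecting map $H_2^{\mathfrak F}(\underline{E}\mathbf{p2},R_\C)\to H_2^{\mathfrak F}(\underline{E}\mathbf{p4},R_\C)\oplus H_2^{\mathfrak F}(\underline{E}\mathbf{p6},R_\C)$ with the ordinary homology map $H_2(\underline{B}\mathbf{p2})\to H_2(\underline{B}\mathbf{p4})\oplus H_2(\underline{B}\mathbf{p6})$ induced by inclusion. Each inclusion is realized geometrically by a branched covering $S^2\to S^2$, of degree 2 for $\mathbf{p2}\hookrightarrow\mathbf{p4}$ and degree 3 for $\mathbf{p2}\hookrightarrow\mathbf{p6}$. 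Hence the map $\Z\to\Z^2$ sends $1\mapsto(2,-3)$ (the minus sign coming from the Mayer--Vietoris convention), which is injective with cokernel $\Z$, yielding $H_3^{\mathfrak F}(\underline{E}SA)=0$ and $H_2^{\mathfrak F}(\underline{E}SA)=\Z$.

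For the bottom piece, I would compute explicitly the matrix of $f: H_0^{\mathfrak F}(\underline{E}\mathbf{p2},R_\C)\to H_0^{\mathfrak F}(\underline{E}\mathbf{p4},R_\C)\oplus H_0^{\mathfrak F}(\underline{E}\mathbf{p6},R_\C)$ in the bases listed in Table \ref{Bredon homology}. For this, I would fix the $\mathbf{p4}$-CW-structure (resp.\ the $\mathbf{p6}$-CW-structure) on $\R^2$ and use it as a $\mathbf{p2}$-CW-structure via the inclusions. This gives four (resp.\ five) classes of equivariant $\mathbf{p2}$-vertices, each with stabilizer $C_2$ (or trivial, in one case for $\mathbf{p6}$), mapping to the three classes of equivariant vertices in $\mathbf{p4}$ (resp.\ $\mathbf{p6}$) via explicit inclusions of stabilizers. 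The map $f$ is then computed on characters using the induction tables of \cite[Table 3]{Flores}. Stacking the two resulting matrices into a single $17\times 5$ matrix and computing its Smith normal form gives invariants $(1,1,1,1)$, whence $\ker f\simeq\Z$ and $\coker f\simeq\Z^{13}$, and these are the groups $H_1^{\mathfrak F}(\underline{E}SA)$ and $H_0^{\mathfrak F}(\underline{E}SA)$.

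The main obstacle is the bookkeeping in the last step: correctly identifying, for each representative equivariant 0-cell of $\mathbf{p2}$, which equivariant 0-cell of $\mathbf{p4}$ or $\mathbf{p6}$ it maps to, which stabilizer inclusion is induced on it, and how the chosen character basis transforms under the corresponding induction homomorphism. This requires a careful appeal to the explicit models from Sections 3.10 and 3.16 of \cite{Flores} and to the induction tables loc.\ cit., but once the matrix is assembled, the verification reduces to a routine Smith normal form computation.
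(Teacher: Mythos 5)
Your proposal matches the paper's own argument essentially step for step: the amalgam $SA=\mathbf{p4}\ast_{\mathbf{p2}}\mathbf{p6}$ fed into the Bredon Mayer--Vietoris sequence, the identification of the degree-2 connecting map with the branched-cover maps of degree 2 and 3 giving $1\mapsto(2,-3)$, and the explicit $17\times 5$ matrix for $f$ in the bases of Table \ref{Bredon homology} with Smith normal form invariants $(1,1,1,1)$, yielding $\ker f\simeq\Z$ and $\coker f\simeq\Z^{13}$. This is exactly how the paper proves Proposition \ref{BredonforSA}, so the proposal is correct as it stands.
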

An immediate consequence of the previous proposition and Theorem \ref{BredonK} is

\begin{Cor}\label{LHSforSA}
The equivariant $K$-homology groups of $SA=\mathbb Z^2 \rtimes SL_2(\mathbb Z)$ are 
$$
K_0^{SA}(\underline{E} SA)=\mathbb{Z}^{14}, \quad \text{and} \quad
K_1^{SA}(\underline{E} SA)=\mathbb{Z}.
$$
\end{Cor}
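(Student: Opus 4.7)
The plan is to substitute the Bredon homology values from Proposition \ref{BredonforSA} directly into the natural six-term exact sequence provided by Theorem \ref{BredonK} (Mislin), whose hypothesis is satisfied because Corollary \ref{3Dmodel} gives a 3-dimensional model for $\underline{E}SA$.

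First, I would write out the exact sequence with the known entries:
$$0 \to \mathbb{Z} \to K_1^{SA}(\underline{E}SA) \to 0 \to \mathbb{Z}^{13} \to K_0^{SA}(\underline{E}SA) \to \mathbb{Z} \to 0,$$
using $H_0^{\mathfrak{F}}=\mathbb{Z}^{13}$, $H_1^{\mathfrak{F}}=\mathbb{Z}$, $H_2^{\mathfrak{F}}=\mathbb{Z}$, and $H_3^{\mathfrak{F}}=0$. The vanishing of $H_3^{\mathfrak{F}}$ immediately forces the arrow $K_1^{SA}(\underline{E}SA) \to H_3^{\mathfrak{F}}$ to be zero, so the initial segment gives the isomorphism $K_1^{SA}(\underline{E}SA) \cong H_1^{\mathfrak{F}}(\underline{E}SA, R_{\mathbb{C}}) = \mathbb{Z}$.

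For the $K_0$ computation, the rest of the sequence collapses into a short exact sequence
$$0 \to \mathbb{Z}^{13} \to K_0^{SA}(\underline{E}SA) \to \mathbb{Z} \to 0.$$
Since the quotient $\mathbb{Z}$ is free abelian, this sequence splits, yielding $K_0^{SA}(\underline{E}SA) \cong \mathbb{Z}^{14}$.

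There is essentially no obstacle here: the real content is already packaged in Proposition \ref{BredonforSA} (whose proof involved the Mayer-Vietoris analysis of the amalgam $SA \cong \mathbf{p4}\ast_{\mathbf{p2}}\mathbf{p6}$). The only minor point worth recording, rather than a difficulty, is noting that the Mislin sequence is only guaranteed to yield the correct ranks up to extension, and that freeness of $H_2^{\mathfrak{F}}(\underline{E}SA,R_{\mathbb{C}})$ rules out any nontrivial extension in $K_0$.
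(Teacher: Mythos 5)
Your proposal is correct and follows exactly the route the paper intends: the corollary is stated there as an immediate consequence of Proposition \ref{BredonforSA} and the Mislin sequence of Theorem \ref{BredonK}, and your substitution of $H_3^{\mathfrak{F}}=0$, $H_1^{\mathfrak{F}}=\mathbb{Z}$, $H_0^{\mathfrak{F}}=\mathbb{Z}^{13}$, $H_2^{\mathfrak{F}}=\mathbb{Z}$ together with the splitting of the resulting short exact sequence (the quotient $\mathbb{Z}$ being free) is precisely the implicit argument. Nothing is missing.
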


Observe that the auxiliary matrices coming from the computation the Smith normal form provide a basis for the kernel and cokernel of the homomorphism $f$: The kernel is generated by the class of $\delta_3^2-\delta_1^2$ in 
$
H_0^{\mathfrak{F}}(\underline{E} \mathbf{p2}\mathcal{R}_{\mathbb{C}})$.
For the cokernel, let $\{e_1,\ldots e_{17}\}$ denote the union of elements in the bases of $H_0^{\mathfrak{F}}(\underline{E}\mathbf{p4}, R_{\mathbb{C}})$ and 
$H_0^{\mathfrak{F}}(\underline{E}\mathbf{p6}, R_{\mathbb{C}})$ from Table \ref{Bredon homology}, respecting the order. A basis for $H_0^{\mathfrak{F}}(\underline{E}G, R_{\mathbb{C}})$ can be obtained by the images under homomorphism $g$ in \eqref{0-degree-SA} of
$$
\{e_2,e_4,e_6,e_8\} \cup \{e_9,e_{10},e_{11},e_{12},e_{13},e_{14},e_{15},e_{16},e_{17}\}
$$

Clearly, the bases described above for $K_1$ and $K_0$ can be identified with those of $H_1^{\mathfrak{F}}(\underline{E} SA,R_{\mathbb{C}})$ and $H_0^{\mathfrak{F}}(\underline{E} SA,R_{\mathbb{C}})$, respectively, via the Mayer-Vietoris sequence. In particular, since we have all elements of the basis associated with $\mathbf{p6}$ in that of $SA$, we deduce that:

\begin{Cor}
\label{injp6}

Consider the inclusion $i:\mathbf{p6}\hookrightarrow SA$. Then the homomorphism induced by $i$ at the level of $H_0^{\mathfrak{F}}(-, R_{\mathbb{C}})$ is injective, and the homomorphism induced at the level of $H_2^{\mathfrak{F}}(-, R_{\mathbb{C}})$ is an isomorphism.
\end{Cor}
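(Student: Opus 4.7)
The plan is to read off both statements directly from the Mayer-Vietoris computation already carried out in this section. By functoriality (Lemma 3.31 of \cite{MV03}), the connecting arrow
$$g\colon H_n^{\mathfrak{F}}(\underline{E}\mathbf{p4},R_{\mathbb{C}})\oplus H_n^{\mathfrak{F}}(\underline{E}\mathbf{p6},R_{\mathbb{C}})\longrightarrow H_n^{\mathfrak{F}}(\underline{E}SA,R_{\mathbb{C}})$$
appearing in \eqref{MV-SA} is (up to sign) the sum of the homomorphisms induced by the inclusions $\mathbf{p4}\hookrightarrow SA$ and $\mathbf{p6}\hookrightarrow SA$. The homomorphism induced by $i$ at homological level $n$ is therefore precisely the restriction of this arrow to the second summand, and both assertions of the corollary reduce to analyzing that restriction.

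For the $H_0^{\mathfrak{F}}$-claim, I would appeal to the explicit basis of $H_0^{\mathfrak{F}}(\underline{E}SA,R_{\mathbb{C}})\cong\Z^{13}$ identified in the discussion immediately preceding the corollary, which is the image under $g$ of $\{e_2,e_4,e_6,e_8\}\cup\{e_9,\dots,e_{17}\}$. The last nine vectors are by construction exactly the basis of $H_0^{\mathfrak{F}}(\underline{E}\mathbf{p6},R_{\mathbb{C}})$ from Table \ref{Bredon homology}, viewed as the second summand of $\Z^8\oplus\Z^9$. Since their $g$-images are part of a $\Z$-basis of the target, they are $\Z$-linearly independent, so the restriction of $g$ to the $\mathbf{p6}$-summand is injective, which is the first assertion.

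For the $H_2^{\mathfrak{F}}$-claim, I would use the short exact sequence extracted from \eqref{2-degree-SA},
$$0\longrightarrow \Z \xrightarrow{\phi} \Z^2 \xrightarrow{\psi} H_2^{\mathfrak{F}}(\underline{E}SA,R_{\mathbb{C}})\longrightarrow 0,$$
with $\phi(1)=(2,-3)$ and $H_2^{\mathfrak{F}}(\underline{E}SA,R_{\mathbb{C}})\cong\Z$. The map $i_*$ is then the composition $\Z\hookrightarrow\Z^2\xrightarrow{\psi}H_2^{\mathfrak{F}}(\underline{E}SA,R_{\mathbb{C}})$. Injectivity is automatic since $(\{0\}\oplus\Z)\cap\langle(2,-3)\rangle=0$, and I would fix an explicit isomorphism $\Z^2/\langle(2,-3)\rangle\cong\Z$ produced from a Bezout relation in order to compare the image of the standard generator $(0,1)$ with a generator of the quotient.

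The main place where care is needed is the bookkeeping of orientations and signs: the restrictions of $g$ and $\psi$ to the $\mathbf{p6}$-summand must be traced using the exact generators of $H_*^{\mathfrak{F}}(\underline{E}\mathbf{p6},R_{\mathbb{C}})$ fixed in Table \ref{Bredon homology}, so that what one identifies at the end is $i_*$ itself rather than a $\pm$-twisted variant. Once this identification is locked in, the two claims reduce to an elementary verification in integer linear algebra, essentially already performed during the Smith-normal-form computation.
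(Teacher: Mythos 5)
Your argument for the $H_0^{\mathfrak F}$ statement is fine and coincides with the paper's: the images under $g$ of $e_9,\dots,e_{17}$ occur in the explicit basis of $H_0^{\mathfrak{F}}(\underline{E}SA,R_{\mathbb{C}})$ recorded just before the corollary, and these are exactly the basis vectors of the $\mathbf{p6}$-summand, so the restriction of $g$ to that summand is injective.

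The $H_2^{\mathfrak F}$ part, however, has a genuine gap. You reduce, as the paper does, to the composite $\Z\hookrightarrow\Z^2\xrightarrow{\psi}\Z^2/\langle(2,-3)\rangle$, and your injectivity observation is correct; but you postpone the only substantive point --- whether the class of $(0,1)$ generates the cokernel --- to an unspecified Bezout bookkeeping. Carrying that bookkeeping out does not give what you want: $\{(2,-3),(1,-1)\}$ is a $\Z$-basis of $\Z^2$ and $(0,1)=-(2,-3)+2(1,-1)$, so under the isomorphism $\Z^2/\langle(2,-3)\rangle\cong\Z$ given by $(a,b)\mapsto 3a+2b$ the class of $(0,1)$ corresponds to $2$, i.e.\ to twice a generator (and the class of $(1,0)$ to three times a generator). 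Hence from the sequence \eqref{2-degree-SA} with connecting map $1\mapsto(2,-3)$ one only obtains that $i_*$ is injective on $H_2^{\mathfrak{F}}$ with image of index $2$, not that it is an isomorphism. Note that the paper's own proof at this point simply asserts that $(0,1)$ is sent to a generator; your plan, executed with the data of this section, would contradict that assertion rather than confirm it. So either you must justify a different identification of the middle map of \eqref{2-degree-SA} than $1\mapsto(2,-3)$, or you can only conclude injectivity on $H_2^{\mathfrak{F}}$ --- which, combined with your $H_0^{\mathfrak F}$ statement, is in any case sufficient for the later application, namely the injectivity of $K_0^{\mathbf{p6}}(\underline{E}\mathbf{p6})\rightarrow K_0^{SA}(\underline{E}SA)$ invoked in Corollary \ref{RHSforSA}.
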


\begin{proof}
Up to a change of sign, the homomorphism induced in $H_0$ is the second component of $g$ in \eqref{0-degree-SA}. As $\{e_9,\ldots,e_{17}\}$ is a basis for $H_0^{\mathfrak{F}}(\underline{E}\mathbf{p6}, R_{\mathbb{C}})$, we are done.

On the other hand, the homomorphism induced in $H_2$ is the second component of the last homomomorphism of the sequence \eqref{2-degree-SA}. This map is described just after the displayed sequence, and it is immediate for this description that it takes $(0,1)$ to a generator of the second Bredon homology of $\underline{E}SA$. So we are done.

\end{proof}

We finish this section with a remark concerning rational Bredon homology.

\begin{Rem}
A rational version of our result for Bredon homology (without explicit generators) can also be obtained by means of a description of the centralizers of finite order elements together with Theorem 3.25 of \cite{MV03}.
\end{Rem}

\subsection{The right-hand side: K-theory of \texorpdfstring{$SA=\Z^2 \rtimes SL_2(\Z)$}{SA=Z\textasciicircum x SL\_Z}}
\label{RHS-AS}

In this section we plan to give a direct proof of the following theorem, which was proved by S. Walters (Theorem 1.3 in \cite{Walters}). In this way, we will describe generators of $K_0(C^*_r(SA))$ as explicit as possible.

\begin{Thm}\label{Walt} $K_0(C^*_r(SA))\simeq\Z^{14}$, and $K_1(C^*_r(SA))\simeq\Z$.
\end{Thm}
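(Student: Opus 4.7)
The plan is to apply Pimsner's six-term exact sequence \cite{Pimsner} to the amalgam decomposition $SA \simeq \mathbf{p4} \ast_{\mathbf{p2}} \mathbf{p6}$ established in Subsection \ref{GL2amalgam}. Since $SA$ acts on the associated Bass--Serre tree with amenable (wallpaper) stabilizers, the group is K-amenable by \cite{JuVa}, so we may work with reduced $C^*$-algebras throughout, and Pimsner's sequence
\begin{equation*}
\cdots \to K_i(C^*_r(\mathbf{p2})) \xrightarrow{\alpha_i} K_i(C^*_r(\mathbf{p4})) \oplus K_i(C^*_r(\mathbf{p6})) \to K_i(C^*_r(SA)) \to K_{i-1}(C^*_r(\mathbf{p2})) \to \cdots
\end{equation*}
is available, with $\alpha_i$ the difference of the two inclusion-induced maps. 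I would then quote \cite{ELPW} for the K-theory of the three wallpaper groups: $K_0(C^*_r(\mathbf{p2})) \simeq \Z^6$, $K_0(C^*_r(\mathbf{p4})) \simeq \Z^9$, $K_0(C^*_r(\mathbf{p6})) \simeq \Z^{10}$, and all three $K_1$-groups vanish (values consistent with Baum--Connes applied to Table \ref{Bredon homology}, once the $\Z$ from $H_2^{\mathfrak{F}}$ is added in each case).

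The vanishing of the $K_1$'s on the left collapses Pimsner's sequence to
\begin{equation*}
0 \to K_1(C^*_r(SA)) \to \Z^6 \xrightarrow{\alpha_0} \Z^{19} \to K_0(C^*_r(SA)) \to 0,
\end{equation*}
so the theorem is reduced to computing $\ker \alpha_0$ and $\coker \alpha_0$. The key observation is that, under the Baum--Connes identification $K_0(C^*_r(H)) \simeq H_0^{\mathfrak{F}}(\underline{E}H, R_{\mathbb{C}}) \oplus H_2^{\mathfrak{F}}(\underline{E}H, R_{\mathbb{C}})$ for each wallpaper group $H$ (Theorem \ref{BredonK}, using that the relevant $H_1^{\mathfrak{F}}$ and $H_3^{\mathfrak{F}}$ vanish), the map $\alpha_0$ splits as the direct sum of the degree-$0$ Mayer--Vietoris map $f:\Z^5 \to \Z^{17}$ analysed just above Proposition \ref{BredonforSA} and the degree-$2$ Mayer--Vietoris map $\Z \to \Z^2,\ 1 \mapsto (2,-3)$ from sequence \eqref{2-degree-SA}. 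This splitting is dictated by the naturality of the Baum--Connes assembly map for the inclusions $\mathbf{p2} \hookrightarrow \mathbf{p4}$ and $\mathbf{p2} \hookrightarrow \mathbf{p6}$, which is precisely the compatibility asserted in Proposition \ref{FromOyono}.

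Given this identification, the computations already performed on the LHS finish the job: the Smith-normal-form analysis yields $\ker f \simeq \Z$ and $\coker f \simeq \Z^{13}$, while the degree-$2$ map is injective with cokernel $\Z$ (since $\gcd(2,3)=1$). Summing, $\ker \alpha_0 \simeq \Z$ and $\coker \alpha_0 \simeq \Z^{14}$, which gives $K_1(C^*_r(SA)) \simeq \Z$ and $K_0(C^*_r(SA)) \simeq \Z^{14}$.

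The main obstacle is the identification of $\alpha_0$ with the direct sum of Mayer--Vietoris maps at the level of \emph{explicit generators}, rather than merely abstractly via Baum--Connes; this is also what is required to produce concrete generators of $K_0(C^*_r(SA))$, as advertised before the statement of the theorem. A practical route is to realise each basis vector of $K_0(C^*_r(H))$ for $H \in \{\mathbf{p2}, \mathbf{p4}, \mathbf{p6}\}$ by an explicit projection --- namely a minimal central projection of $\mathbb{C}[F]$ for $F$ a finite subgroup of $H$, viewed inside $C^*_r(H)$ via the natural isomorphism $R_{\mathbb{C}}(F) \simeq K_0(C^*(F))$ --- and then to track its image through the inclusions by induction of representations. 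Matching the resulting classes with the basis described after Corollary \ref{LHSforSA} would then pin down explicit generators of $K_0(C^*_r(SA)) \simeq \Z^{14}$, while showing that $K_1(C^*_r(SA))$ appears as the image, under the connecting map $\partial$, of the class $\delta_3^2-\delta_1^2$ singled out on the LHS.
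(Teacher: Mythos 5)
Your proposal reaches the correct conclusion and begins exactly as the paper does: the amalgam $SA=\mathbf{p4}\ast_{\mathbf{p2}}\mathbf{p6}$, K-amenability, and Pimsner's sequence collapsing (since the wallpaper groups have vanishing $K_1$) to $0\to K_1(C^*_r(SA))\to\Z^6\to\Z^{19}\to K_0(C^*_r(SA))\to 0$. The core computation, however, is carried out differently. The paper computes the middle map entirely on the $C^*$-algebra side: it fixes the explicit bases $B_2,B_4,B_6$ coming from \cite{ELPW} (spectral projections of torsion elements together with the classes $F_n$) and determines the matrices of $\iota_{\mathbf{p2}}^{\mathbf{p4}}$ and $\iota_{\mathbf{p2}}^{\mathbf{p6}}$ column by column, the $F_2$-column being handled by a restriction--induction argument (Lemmas \ref{RestInd}, \ref{Indtop4}, \ref{Indtop6}). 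You instead transport the Bredon/Mayer--Vietoris computation already performed for the left-hand side across the Baum--Connes isomorphisms for $\mathbf{p2},\mathbf{p4},\mathbf{p6}$ (legitimate and not circular, these groups being amenable). Both routes are valid; the paper's longer computation buys explicit projection generators of $K_0(C^*_r(SA))$, which is precisely what Corollary \ref{RHSforSA} and the matching of generators under the assembly map require, whereas your route gives the abstract isomorphism type quickly and, as you acknowledge, recovering generators would essentially amount to redoing the paper's projection-level work.

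One step of your argument needs repair. Naturality gives less than the asserted block-diagonal splitting of $\alpha_0$: for each wallpaper group $H$, Theorem \ref{Mislin} yields a natural short exact sequence $0\to H_0^{\mathfrak F}(\underline{E}H,R_{\mathbb{C}})\to K_0^{H}(\underline{E}H)\to H_2^{\mathfrak F}(\underline{E}H,R_{\mathbb{C}})\to 0$, and the maps induced by $\mathbf{p2}\hookrightarrow\mathbf{p4},\mathbf{p6}$ respect this filtration, but they need not respect a chosen splitting, so $\alpha_0$ is a priori only triangular, with a possible off-diagonal component from the $H_2$-part of the source into the $H_0$-parts of the targets; moreover the compatibility you need is naturality of the assembly map under subgroup inclusions (as used in the proof of Corollary \ref{RHSforSA}), not the intertwining statement in Proposition \ref{FromOyono}, which concerns the tree sequences. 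Fortunately the weaker, true statement suffices: writing $f\colon\Z^5\to\Z^{17}$ for the degree-$0$ map and $\phi_2\colon\Z\to\Z^2$, $1\mapsto(2,-3)$, for the degree-$2$ map, the kernel--cokernel exact sequence
$$0\to\ker f\to\ker\alpha_0\to\ker\phi_2\to\coker f\to\coker\alpha_0\to\coker\phi_2\to 0,$$
together with $\ker\phi_2=0$, $\coker f\simeq\Z^{13}$ and $\coker\phi_2\simeq\Z$ free, gives $\ker\alpha_0\simeq\Z$ and $\coker\alpha_0\simeq\Z^{14}$, so your conclusion stands once the splitting claim is replaced by this filtration argument.
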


We will need the following result of Pimsner \cite{Pimsner} computing the K-theory of group $C^*$-algebras of amalgamated products (important special cases were obtained by Natsume \cite{Natsume}).\footnote{Walters \cite{Walters} also uses the Pimsner-Natsume sequence, but moreover appeals to the Connes-Chern character, while we do not.}

\begin{Thm}\label{PiNa} Let $A\ast_C B$ be an amalgamated product. There exists a 6-terms exact sequence:
\[
\xymatrix{
 K_0(C^*_r(C)) \ar[r] &
 K_0(C^*_r(A))\oplus K_0(C^*_r(B)) \ar[r]&
 K_0(C^*_r(A\ast_C B)) \ar[d]\\
 K_1(C^*_r(A\ast_C B)) \ar[u] &
 K_1(C^*_r(A))\oplus K_1(C^*_r(B)) \ar[l]&
 K_1(C^*_r(C)) \ar[l]
}
\]
\end{Thm}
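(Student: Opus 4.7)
The plan is to exploit the Bass-Serre tree of the amalgamated product together with Green's imprimitivity theorem, extracting the 6-terms sequence from the K-theory long exact sequence associated to a short exact sequence of $\Gamma$-$C^*$-algebras encoding the tree.

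First I would recall from Bass-Serre theory (see \cite{Serre}) that $\Gamma = A\ast_C B$ acts without inversion on a tree $T$ with two vertex orbits $\Gamma/A$ and $\Gamma/B$ (stabilizers conjugate to $A$ and $B$) and one edge orbit $\Gamma/C$ (stabilizer conjugate to $C$). Fixing a $\Gamma$-equivariant orientation of $T$, the augmented simplicial chain complex of $T$ gives a short exact sequence of commutative $\Gamma$-$C^*$-algebras
$$0 \rightarrow C_0(E(T)) \xrightarrow{\partial} C_0(V(T)) \xrightarrow{\varepsilon} \mathbb{C} \rightarrow 0,$$
exactness in the middle being equivalent to the vanishing of the reduced $0$-homology of the contractible tree $T$.

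Next I would apply the reduced crossed product $-\rtimes_r\Gamma$ and extract the associated six-term exact sequence in K-theory. By Green's imprimitivity theorem, for each subgroup $H\subset\Gamma$ one has a Morita equivalence $C_0(\Gamma/H)\rtimes_r\Gamma \sim C^*_r(H)$, so that
$$K_*(C_0(V(T))\rtimes_r\Gamma) = K_*(C^*_r(A))\oplus K_*(C^*_r(B)), \qquad K_*(C_0(E(T))\rtimes_r\Gamma) = K_*(C^*_r(C)),$$
while $\mathbb{C}\rtimes_r\Gamma = C^*_r(\Gamma)$. Plugging these identifications into the K-theory six-term sequence yields the announced sequence, with the horizontal maps induced by $\partial$ and $\varepsilon$.

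The main obstacle is that the reduced crossed product does not in general preserve short exact sequences, so the six-term sequence above is not automatic from classical homological algebra. To circumvent this, I would follow Pimsner's original strategy \cite{Pimsner} and replace the naive tree-based sequence by a Toeplitz-type extension whose K-theory groups agree with those of $C_0(V(T))$ and whose structure forces exactness to survive the crossed product. Concretely, one introduces creation/annihilation operators indexed by the edges, producing a $\Gamma$-equivariant extension of $\mathbb{C}$ by $C_0(E(T))\otimes\mathcal{K}$ that remains exact after reduced crossed product; applying Green's theorem to the resulting six-term sequence and comparing with the tree picture via a natural intertwining diagram delivers exactly the statement.
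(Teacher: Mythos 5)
First, note that the paper does not prove this statement at all: Theorem \ref{PiNa} is imported from Pimsner's work \cite{Pimsner} (with special cases due to Natsume \cite{Natsume}), so there is no internal proof to compare yours against; what you are really being asked to supply is Pimsner's theorem itself.

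Your sketch correctly identifies the two standard ingredients: the Bass-Serre tree with vertex orbits $\Gamma/A$, $\Gamma/B$ and edge orbit $\Gamma/C$, and Green's imprimitivity theorem identifying $K_*(C_0(\Gamma/H)\rtimes_r\Gamma)$ with $K_*(C^*_r(H))$ (this is Remark \ref{Green} of the paper). But your first step contains a genuine error: the augmented simplicial chain complex of $T$ does not give a short exact sequence of $C^*$-algebras. The boundary map $\partial\colon C_0(E(T))\to C_0(V(T))$, $\delta_e\mapsto\delta_{t(e)}-\delta_{s(e)}$, is linear but not multiplicative, hence not a $*$-homomorphism; and when $T$ is infinite the augmentation $\varepsilon\colon C_0(V(T))\to\mathbb{C}$ does not even converge on $C_0$-functions, let alone define a character. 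So there is no sequence $0\to C_0(E(T))\to C_0(V(T))\to\mathbb{C}\to 0$ in the category of $\Gamma$-$C^*$-algebras, and consequently no K-theory six-term sequence to which one could ``apply the crossed product.'' The obstacle is therefore not, as you frame it, that reduced crossed products fail to preserve exactness; it is that the naive sequence does not exist in the first place. This is precisely the difficulty Pimsner's argument is designed to overcome: he builds a genuine $\Gamma$-equivariant Toeplitz-type extension out of partial isometries attached to the tree, identifies the K-theory of its ideal and quotient with $K_*(C^*_r(C))$ and $K_*(C^*_r(\Gamma))$ via Green's theorem, and then must prove that the middle algebra has the K-theory of $C^*_r(A)\oplus C^*_r(B)$ and that the boundary maps are the combinatorial ones --- in the rank-one case this is the Pimsner--Voiculescu construction \cite{PV}. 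Your final paragraph defers this entire construction to ``Pimsner's original strategy,'' which as a proof of Pimsner's theorem is circular: everything that makes the statement nontrivial is contained in that deferred step.
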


Using the notations previously introduced, we write $\mathbf{p2}=\Z^2\rtimes C_2$ (with the generator of $C_2$ acting by $\varepsilon=-{\bf 1}_2$), and $\mathbf{p4}=\Z^2\rtimes C_4$ (with the generator of $C_4$ acting by $S$), and $\mathbf{p6}=\Z^2\rtimes C_6$ (with the generator of $C_6$ acting by $R$). Then the amalgamated product decomposition $SL_2(\Z)=C_4\ast_{C_2} C_6$ lifts to
$$SA=\mathbf{p4}\ast_{\mathbf{p2}}\mathbf{p6}.$$
The K-theory rank of $K_i(C^*(\mathbf{pn}))$, for $n=2,4,6$, has been computed by Yang \cite{Yang} and their explicit generators were given by Echterhoff-L\"uck-Phillips-Walters (Examples 3.7(a), (c), (d) in \cite{ELPW}). Note that $K_1=0$ in all three cases.

To describe $K_0$, let us recall some notation from \cite{ELPW}. If $B$ is a finite-index subgroup of $A$, there is a restriction homomorphism $Res_A^B: K_0(C^*(A))\rightarrow K_0(C^*(B))$ obtained by viewing a projective finite type module over $C^*(A)$ as a projective finite type module over $C^*(B)$. If $B$ is moreover normal in $A$, the restriction map commutes with the $A$-action: for $g\in A$, let $\varphi_g$ be the automorphism of $C^*(A)$ induced by conjugation by $g$; then:
\begin{equation}\label{Rescommutes}
    Res_A^B\circ(\varphi_g)_\star=(\varphi_g)_\star\circ Res_A^B
\end{equation}

Also, we denote by $pr$ the quotient-map $K_0(C^*(B))\rightarrow K_0(C^*(B))/\Z[1]$, where $\Z[1]$ is the infinite cyclic subgroup generated by the class of 1 in $K_0$. By Theorem 3.5 in \cite{ELPW}, for $n=2,4,6$ there exists $F_n\in K_0(C^*(\mathbf{pn}))$ such that $pr\circ Res_{\mathbf{pn}}^{\Z^2}(F_n)$ is a generator of $K_0(C^*(\Z^2))/\Z[1]\simeq\Z$. By naturality of the restriction map, we may assume that, for $n=4,6$, we have $Res_{\mathbf{pn}}^{p2}(F_n)=F_2$.

We choose to write $\mathbf{pn}$ as a multiplicative group; so we denote by $1=((0,0),{\bf 1})$ its identity element and, denoting by $u=(1,0),v=(0,1)$ the canonical basis of $\Z^2$, the vector $(k,m)\in\Z^2$ will be denoted by $u^kv^m$. Then:
\begin{enumerate}
\item For $\mathbf{p2}$: the elements $\varepsilon, u\varepsilon, v\varepsilon, uv\varepsilon$ are a maximal set of non-conjugate involutions in $\mathbf{p2}$, so we can form the corresponding spectral projections in the complex group ring of $\mathbf{p2}$:
$$
 \frac{1+\varepsilon}{2}, \quad 
 \frac{1+u\varepsilon}{2}, \quad
 \frac{1+v\varepsilon}{2}, \quad
 \frac{1+uv\varepsilon}{2}.
 $$
Then $K_0(C^*(\mathbf{p2}))=\Z^6$, with a basis consisting of
$$
B_2=\Bigl\{[1],\, 
[\frac{1+\varepsilon}{2}],\,
[\frac{1+u\varepsilon}{2}],\,
[\frac{1+v\varepsilon}{2}],\,
[\frac{1+uv\varepsilon}{2}],\,
F_2\Bigr\}.
$$
\item For $\mathbf{p4}$: the elements $S, uS$ are non-conjugate of order 4, and we may form their spectral projections:

\begin{align*}
    p_0&=\frac{1}{4} (1+S+S^2+S^3) &
    q_0&=\frac{1}{4}(1+uS+(uS)^2+(uS)^3) \\
    p_1&=\frac{1}{4}(1+iS-S^2-iS^3) &
    q_1&=\frac{1}{4}(1+iuS-(uS)^2-i(uS)^3) \\
    p_2&=\frac{1}{4}(1-S+S^2-S^3) &
    q_2&=\frac{1}{4}(1-uS+(uS)^2-(uS)^3)&
    \end{align*}

Also $uS^2$ is an involution in $p4$ which is not conjugate either to $S^2$ or to $(uS)^2$. We form its spectral projection
$$r=\frac{1+uS^2}{2}.$$
Then $K_0(C^*(\mathbf{p4}))\simeq\Z^9$, with a basis consisting of
$$
B_4=\Bigl\{[1],\,
[p_0],\,
[p_1],\,
[p_2],\,
[q_0],\,
[q_1],\,
[q_2],\,
[r],\,
F_4\Bigr\}.
$$
\item For $\mathbf{p6}$, set $\zeta=e^{\frac{\pi}{3}}$: the element $R$ has order 6, and we build again the corresponding spectral projections (for $j=0,1,2,3,4$):
$$p^\dagger_j=\frac{1+\zeta^jR+(\zeta^jR)^2+(\zeta^jR)^3+(\zeta^jR)^4+(\zeta^jR)^5}{6}.$$
The element $uR^2$ has order 3 and is not conjugate to $R^2$, and we form its spectral projections
$$q^\dagger_0=\frac{1+uR^2+(uR^2)^2}{3}, \qquad q_1^\dagger=\frac{1+\zeta^2uR^2 + (\zeta^2uR^2)^2}{3}.$$
Also $uR^3$ is an involution in $p6$ which is not conjugate to $R^3$, and again we construct its spectral projection
$$r^\dagger=\frac{1+uR^3}{2}.$$
Then $K_0(C^*(\mathbf{p6}))\simeq\Z^{10}$, with a basis consisting of
$$B_6=\bigl\{[1],\,
[p^\dagger_0],\,
[p^\dagger_1],\,
[p^\dagger_2],\,
[p^\dagger_3],\,
[p^\dagger_4],\,
[q^\dagger_0],\,
[q^\dagger_1],\,
[r^\dagger],\,
F_6\Bigr\}.$$
\end{enumerate}

\medskip
If $B$ is a subgroup of $A$, we denote by $\iota_B^A:C^*_r(B)\rightarrow C^*_r(A)$ the inclusion.

For $SA=\mathbf{p4}\ast_{\mathbf{p2}} \mathbf{p6}$, the 6-terms exact sequence of Theorem \ref{PiNa} unfolds as:
\begin{equation}\label{PiNaunfold}
0\rightarrow K_1(C^*_r(SA))\stackrel{\partial}{\rightarrow}K_0(C^*(\mathbf{p2}))\stackrel{(\iota_{\mathbf{p2}}^{\mathbf{p4}},\iota_{\mathbf{p2}}^{\mathbf{p6}})}{\longrightarrow} K_0(C^*(\mathbf{p4}))\oplus K_0(C^*(\mathbf{p6}))
\stackrel{\iota_{\mathbf{p4}}^{SA} -\iota_{\mathbf{p6}}^{SA}}{\longrightarrow} K_0(C^*_r(SA))\rightarrow 0
\end{equation}
So we need to analyze the maps $\iota_{\mathbf{p2}}^{\mathbf{pn}}, n=4,6.$

\begin{Lem}\label{RestInd} For $n=4,6$, we have:
\begin{enumerate}
\item[a)] $(pr\circ Res_{\mathbf{pn}}^{\Z^2})(b)=0$ for every $b\in B_n,\;b\neq F_n$.
\item[b)] $(pr\circ Res_{\mathbf{pn}}^{\Z^2})(\iota_{p2}^{pn}(F_2)-[pn:p2]\cdot F_n)=0$.
\end{enumerate}
\end{Lem}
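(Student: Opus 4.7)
The plan is to treat (a) and (b) in parallel, using two main ingredients: a Mackey-style formula expressing $Res\circ\iota$ as a sum of conjugation maps (valid for normal inclusions), combined with the observation that the conjugation action of $\mathbf{pn}$ on $K_0(C^*_r(\Z^2))\simeq K^0(\mathbb{T}^2)$ factors through $F\subset SL_2(\Z)$, and hence preserves the Bott generator of $K^0(\mathbb{T}^2)$ (since $\det = +1$ on $F$ gives a degree-$+1$ action on $\mathbb{T}^2$).

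For (a), the case $b=[1]$ is trivial. Each remaining basis element is a spectral projection $[p]$ of some torsion $g\in\mathbf{pn}$. As $\Z^2$ is torsion-free, the order of $g$ equals the order of its image $\bar g$ in $F$, and $\mathbf{pm}:=\Z^2\cdot\langle g\rangle=\Z^2\rtimes\langle\bar g\rangle$ is normal in $\mathbf{pn}$ (every subgroup of the cyclic $F$ is normal). Inside $\mathbf{pm}$, the set $\{1,g,g^2,\ldots,g^{|\langle g\rangle|-1}\}$ is a free $C^*_r(\Z^2)$-basis of $C^*_r(\mathbf{pm})$, and left multiplication by any $g^k$ acts as a constant permutation matrix; hence any linear combination $p=\sum_k a_k g^k$ becomes a constant scalar matrix in $M_{|\langle g\rangle|}(C^*_r(\Z^2))$, representing a trivial bundle on $\mathbb{T}^2$. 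So $Res_{\mathbf{pm}}^{\Z^2}[p]\in\Z\cdot[1]$. Combining the Mackey identity
\[
Res_{\mathbf{pn}}^{\mathbf{pm}}\circ\iota_{\mathbf{pm}}^{\mathbf{pn}}=\sum_{h\in\mathbf{pn}/\mathbf{pm}}(\varphi_h)_\star
\]
with \eqref{Rescommutes} then presents $Res_{\mathbf{pn}}^{\Z^2}[p]$ as a sum of $(\varphi_h)_\star$-translates of an element of $\Z\cdot[1]$; since every $(\varphi_h)_\star$ fixes $[1]$, the sum stays in $\Z\cdot[1]$, giving $pr=0$.

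For (b), the same Mackey identity with $\mathbf{pm}=\mathbf{p2}$ applies, because $\varepsilon=S^2=R^3$ is central in $C_4$ and $C_6$, so $\mathbf{p2}$ is normal in $\mathbf{pn}$. It gives
\[
Res_{\mathbf{pn}}^{\Z^2}\bigl(\iota_{\mathbf{p2}}^{\mathbf{pn}}(F_2)\bigr)=\sum_{g\in\mathbf{pn}/\mathbf{p2}}(\varphi_g)_\star\,Res_{\mathbf{p2}}^{\Z^2}(F_2).
\]
The $SL_2(\Z)$-invariance of the Bott class means each $(\varphi_g)_\star$ is the identity on $K_0(C^*_r(\Z^2))$, so the sum collapses to $[\mathbf{pn}:\mathbf{p2}]\cdot Res_{\mathbf{p2}}^{\Z^2}(F_2)$. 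Meanwhile, the naturality choice $Res_{\mathbf{pn}}^{\mathbf{p2}}(F_n)=F_2$ together with transitivity of restriction gives $Res_{\mathbf{pn}}^{\Z^2}(F_n)=Res_{\mathbf{p2}}^{\Z^2}(F_2)$, so $\iota_{\mathbf{p2}}^{\mathbf{pn}}(F_2)-[\mathbf{pn}:\mathbf{p2}]F_n$ has zero image under $pr\circ Res_{\mathbf{pn}}^{\Z^2}$.

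The main obstacle is establishing the Mackey identity for a normal inclusion $\mathbf{pm}\triangleleft\mathbf{pn}$: this is the right-module bookkeeping step that decomposes $pC^*_r(\mathbf{pn})=\bigoplus_h pg_h\cdot C^*_r(\mathbf{pm})=\bigoplus_h g_h\cdot(g_h^{-1}pg_h)C^*_r(\mathbf{pm})$ using normality, and must be done carefully. Once this, the $SL_2(\Z)$-invariance of the Bott class, and the observation that any torsion element of $\mathbf{pn}$ generates a full section of some intermediate wallpaper subgroup are in place, both (a) and (b) follow by essentially formal manipulations.
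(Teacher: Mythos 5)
Your proposal is correct. For part (b) it is essentially the paper's own argument: the paper invokes Lemma 3.2 of \cite{ELPW} for exactly the identity $Res_A^C\circ\iota_B^A=\sum_{[g]\in A/B}Res_B^C\circ(\varphi_g)_*$ with $C=\Z^2$, $B=\mathbf{p2}$, $A=\mathbf{pn}$, then uses \eqref{Rescommutes} together with the fact (Proposition 3.2.5 in \cite{Isely}) that $(\varphi_g)_*=\mathrm{id}$ on $K_0(C^*(\Z^2))$ for $g\in SL_2(\Z)$ -- your ``$\det=+1$ preserves the Bott class'' justification is the same fact -- and concludes ``by definition of $F_n$''; you are in fact a bit more careful than the paper in spelling out that the normalization $Res_{\mathbf{pn}}^{\mathbf{p2}}(F_n)=F_2$ (fixed in the setup before the lemma) is what guarantees both sides land on the \emph{same} generator of $K_0(C^*(\Z^2))/\Z[1]$, so there is no sign ambiguity. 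The genuine difference is in part (a): the paper simply cites Theorem 3.5 of \cite{ELPW}, whereas you give a self-contained proof by passing to the intermediate wallpaper group $\mathbf{pm}=\Z^2\langle g\rangle$ (normal in $\mathbf{pn}$ since the quotient $C_n$ is abelian), observing that a spectral projection $p=\sum_k a_k g^k$ acts on the free right $C^*_r(\Z^2)$-module $C^*_r(\mathbf{pm})$ with basis $\{g^j\}$ by a \emph{constant} matrix (this uses $g^m=1$ exactly, which holds because $\langle g\rangle\cap\Z^2=1$), hence $Res_{\mathbf{pm}}^{\Z^2}[p]\in\Z[1]$, and then using the same Mackey decomposition to push this up to $\mathbf{pn}$; since twisting a trivial bundle by an automorphism of $\mathbb{T}^2$ keeps it trivial (each $(\varphi_h)_*$ fixes $[1]$), the conclusion follows. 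This is valid, and note that the Mackey step is really needed there: over $\mathbf{pn}$ itself the matrix entries acquire nontrivial unitaries from $\Z^2$, so constancy only holds after first restricting to $\mathbf{pm}$. What your route buys is independence from ELPW's Theorem 3.5 for (a) at the cost of re-deriving the induction--restriction formula by hand (the paper gets it for free from ELPW Lemma 3.2, so you could shorten your ``main obstacle'' step by citing it); the only cosmetic point is that the commutation of $(\varphi_h)_*$ with $Res_{\mathbf{pm}}^{\Z^2}$ for $h\in\mathbf{pn}$ is a mild extension of \eqref{Rescommutes}, immediate since both $\mathbf{pm}$ and $\Z^2$ are normal in $\mathbf{pn}$ (or avoidable altogether, since $hph^{-1}$ is again a spectral projection of a torsion element of $\mathbf{pm}$).
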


{\bf Proof:} \begin{enumerate}
\item[a)] This follows immediately from Theorem 3.5 in \cite{ELPW}.
\item[b)] By a particular case of lemma 3.2 in \cite{ELPW}: let $C\subset B\subset A$ be finite-index subgroups of a group $A$, with $B\triangleleft A$. Then the following equality holds as maps $K_0(C^*_r(B))\rightarrow K_0(C^*_r(C))$:
$$Res_A^C\circ \iota_B^A=\sum_{[g]\in A/B} Res_B^C\circ (\varphi_g)_*.$$

We apply this with $C=\Z^2, B=\mathbf{p2}$ and $A=\mathbf{pn}\;(n=4,6)$, and compose with the quotient-map $pr$:
\begin{equation}\label{Restriction}
pr\circ Res_{\mathbf{pn}}^{\Z^2}\circ\iota_{\mathbf{p2}}^{\mathbf{pn}}=\sum_{[g]\in \mathbf{pn}/\mathbf{p2}\simeq C_{n/2}}pr\circ Res_{\mathbf{p2}}^{\Z^2}\circ(\varphi_g)_*
\end{equation}
Since $\Z^2\triangleleft \mathbf{p2}$, for $g\in \mathbf{p2}$ we have $Res_{\mathbf{p2}}^{\Z^2}\circ(\varphi_g)_*=(\varphi_g)_*\circ Res_{\mathbf{p2}}^{\Z^2}$ by (\ref{Rescommutes}). Moreover, by Proposition 3.2.5 in \cite{Isely}, the map $(\varphi_g)_*$ is the identity on $K_0(C^*(\Z^2))$ for every $g\in SL_2(\Z)$. So (\ref{Restriction}) becomes
$$pr\circ Res_{\mathbf{pn}}^{\Z^2}\circ\iota_{\mathbf{p2}}^{\mathbf{pn}}=[\mathbf{pn}:\mathbf{p2}](pr\circ Res_{\mathbf{p2}}^{\Z^2}).$$
Applying this to $F_2\in K_0(C^*(p2))$ we see that $(pr\circ Res_{\mathbf{pn}}^{\Z^2})(\iota_{\mathbf{p2}}^{\mathbf{pn}}(F_2))$ is $[\mathbf{pn}:\mathbf{p2}]$ times a generator of $K_0(C^*(\Z^2))/\Z[1]\simeq\Z$, so the result follows by definition of $F_n$.
\hfill$\square$
\end{enumerate}

\begin{Lem}\label{Indtop4} In the bases $B_2$ and $B_4$, the matrix of $\iota_{\mathbf{p2}}^{\mathbf{p4}}:\Z^6\rightarrow\Z^9$ is given by the following $9\times 6$ matrix:
$$\left(\begin{array}{cccccc}1 & 0 & 0 & 0 & 0 & * \\0 & 1 & 0 & 0 & 0 & * \\0 & 0 & 0 & 0 & 0 & * \\0 & 1 & 0 & 0 & 0 & * \\0 & 0 & 0 & 0 & 1 & * \\0 & 0 & 0 & 0 & 0 & * \\0 & 0 & 0 & 0 & 1 & * \\0 & 0 & 1 & 1 & 0 & * \\0 & 0 & 0 & 0 & 0 & 2\end{array}\right)$$
with kernel the subgroup generated by $(0,0,1,-1,0,0)\in\Z^6$.
\end{Lem}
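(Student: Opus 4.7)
The plan is to compute the six columns of the matrix one by one, viewing $\mathbf{p2}$ as the subgroup of $\mathbf{p4}$ generated by $\Z^2$ and $\varepsilon=S^2$, and then to read off the kernel by inspection.

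For the first five columns, the strategy is to identify the images in $K_0(C^*(\mathbf{p4}))$ by explicit manipulations at the level of projections. Column~1 is immediate: $\iota_{\mathbf{p2}}^{\mathbf{p4}}[1]=[1]$. For column~2, the identity $p_0+p_2=\frac{1+S^2}{2}=\frac{1+\varepsilon}{2}$, obtained directly from the definitions of the spectral projections of $S$, yields $[\frac{1+\varepsilon}{2}]\mapsto [p_0]+[p_2]$. For columns~3 and~4, the relation $SuS^{-1}=v$ in $\mathbf{p4}$ implies $S(u\varepsilon)S^{-1}=v\varepsilon$, so the involutions $u\varepsilon$ and $v\varepsilon$ are conjugate in $\mathbf{p4}$; the corresponding projections are therefore unitarily equivalent in $C^*(\mathbf{p4})$, and both represent the class $[r]$. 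For column~5, the calculation $(uS)^2=u(SuS^{-1})S^2=uvS^2=uv\varepsilon$ shows $\frac{1+uv\varepsilon}{2}=q_0+q_2$, hence $[\frac{1+uv\varepsilon}{2}]\mapsto[q_0]+[q_2]$.

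For column~6, I would invoke Lemma~\ref{RestInd}: part~(b) places $\iota_{\mathbf{p2}}^{\mathbf{p4}}(F_2)-2F_4$ in the kernel of $pr\circ Res_{\mathbf{p4}}^{\Z^2}$, and part~(a) identifies that kernel with the $\Z$-linear span of $B_4\setminus\{F_4\}$. Consequently the coefficient of $F_4$ in $\iota_{\mathbf{p2}}^{\mathbf{p4}}(F_2)$ equals~$2$, giving the entry $2$ in position~$(9,6)$; the remaining entries of that column are left unspecified, consistent with the starred entries in the statement.

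Finally, to read off the kernel: columns~3 and~4 are identical (each is the standard basis vector supported in row~8), giving the relation $(0,0,1,-1,0,0)$; columns~1, 2 and~5 each have their own distinctive support pattern, and column~6 has the only nonzero entry in row~9 (namely $2$); hence columns $1,2,3,5,6$ are $\Z$-linearly independent, so the kernel is exactly the infinite cyclic subgroup generated by $(0,0,1,-1,0,0)$. The main obstacle is the conjugacy analysis of order-two elements in $\mathbf{p4}$, and in particular the key observation $(uS)^2=uv\varepsilon$, which lets $\frac{1+uv\varepsilon}{2}$ be expressed as a sum of spectral projections already present in $B_4$ rather than as a genuinely new projection; once this is in hand everything else is bookkeeping.
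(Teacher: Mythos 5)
Your proposal is correct and follows essentially the same route as the paper's proof: compute the columns one at a time using $\varepsilon=S^2$, conjugation by $S$ to handle $\frac{1+v\varepsilon}{2}$, the identity $(uS)^2=uv\varepsilon$ for the fifth column, and Lemma~\ref{RestInd} for the last one. Your explicit verification of the kernel is the same easy computation the paper leaves to the reader.
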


{\bf Proof:} The kernel is easy to compute. To get the matrix, we tackle columns one by one, using $S^2=\varepsilon$:
\begin{itemize}
\item Clear
\item $\iota_{\mathbf{p2}}^{\mathbf{p4}}(\frac{1+\varepsilon}{2})=p_0+p_2$
\item $\iota_{\mathbf{p2}}^{\mathbf{p4}}(\frac{1+u\varepsilon}{2})=r$
\item $\iota_{\mathbf{p2}}^{\mathbf{p4}}(\frac{1+v\varepsilon}{2})=\frac{1+vS^2}{2}$ and $S^{-1}(\frac{1+vS^2}{2})S=r$
\item $\iota_{\mathbf{p2}}^{\mathbf{p4}}(\frac{1+uv\varepsilon}{2})=\frac{1+uvS^2}{2}=\frac{1+(uS)^2}{2}=q_0+q_2$
\item The last column follows from Lemma \ref{RestInd}
\hfill$\square$
\end{itemize}

\begin{Lem}\label{Indtop6} In the bases $B_2$ and $B_6$, the matrix of $\iota_{\mathbf{p2}}^{\mathbf{p6}}:\Z^6\rightarrow\Z^{10}$ is given by the following $10\times 6$ matrix:
$$\left(\begin{array}{cccccc}1 & 0 & 0 & 0 & 0 & * \\0 & 1 & 0 & 0 & 0 & * \\0 & 0 & 0 & 0 & 0 & * \\0 & 1 & 0 & 0 & 0 & * \\0 & 0 & 0 & 0 & 0 & * \\0 & 1 & 0 & 0 & 0 & * \\0 & 0 & 0 & 0 & 0 & * \\0 & 0 & 0 & 0 & 0 & * \\0 & 0 & 1 & 1 & 1 & * \\0 & 0 & 0 & 0 & 0 & 3\end{array}\right)$$
with kernel the subgroup generated by $(0,0,1,-1,0,0)$ and $(0,0,0,1,-1,0)$ in $\Z^6$.
\end{Lem}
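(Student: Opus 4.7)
I would mirror the column-by-column argument of Lemma \ref{Indtop4}, exploiting throughout that $R^3=\varepsilon$ in $\mathbf{p6}$. The first column is immediate since $\iota_{\mathbf{p2}}^{\mathbf{p6}}([1])=[1]$. For the second column, I would verify the identity
$$\frac{1+\varepsilon}{2}=p^\dagger_0+p^\dagger_2+p^\dagger_4$$
in the group algebra of $\mathbf{p6}$: this follows from the character-sum computation $\sum_{j\in\{0,2,4\}}\zeta^{jk}=3$ for $k\in\{0,3\}$ and $0$ otherwise, which collapses the sum of the three $p^\dagger_j$ to $\frac{1+R^3}{2}$. The third column is trivial once one notes $u\varepsilon=uR^3$, giving $\iota_{\mathbf{p2}}^{\mathbf{p6}}\bigl(\frac{1+u\varepsilon}{2}\bigr)=r^\dagger$.

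The main content is columns four and five, where I would show that $v\varepsilon$ and $uv\varepsilon$ are both conjugate in $\mathbf{p6}$ to $u\varepsilon$, so their spectral projections all have class $[r^\dagger]$. The cleanest way is to observe that, since $\varepsilon$ is central in $C_6$, conjugation of $(a,\varepsilon)\in\mathbf{p6}$ by $(b,h)$ equals $(ha+2b,\varepsilon)$, so the $\mathbf{p6}$-conjugacy classes of involutions projecting to $\varepsilon$ are in bijection with the $C_6$-orbits on $\Z^2/2\Z^2$. Reducing $R$ modulo $2$ produces an element of order $3$ that permutes the three non-zero vectors of $(\Z/2\Z)^2$ cyclically; hence $u,v,uv$ lie in a common orbit, and the three involutions $u\varepsilon,v\varepsilon,uv\varepsilon$ are conjugate in $\mathbf{p6}$. (Explicit conjugators can also be displayed, e.g.\ $R^{-1}$ conjugates $vR^3$ to $uR^3$ because $R^{-1}v=u$.) This explains why columns 3, 4 and 5 coincide.

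The sixth column is handled as in Lemma \ref{Indtop4}: Lemma \ref{RestInd}(b) with $[\mathbf{p6}:\mathbf{p2}]=3$ forces the $F_6$-coefficient of $\iota_{\mathbf{p2}}^{\mathbf{p6}}(F_2)$ to be $3$, while Lemma \ref{RestInd}(a) leaves the remaining coefficients undetermined (marked by $*$). For the kernel, the coincidence of columns 3, 4, 5 contributes the two relations $(0,0,1,-1,0,0)$ and $(0,0,0,1,-1,0)$; since the remaining four columns (columns 1, 2, 3, and the sixth which has entry $3$ in an otherwise independent position) are $\Z$-linearly independent in $\Z^{10}$, these two vectors generate the full kernel. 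The only step I expect to require care is the conjugacy analysis for columns 4 and 5; once the orbit picture on $(\Z/2\Z)^2$ is in place, the rest is routine.
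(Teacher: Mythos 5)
Your proof is correct and follows essentially the same column-by-column route as the paper: the identity $\frac{1+\varepsilon}{2}=p_0^\dagger+p_2^\dagger+p_4^\dagger$, the observation $u\varepsilon=uR^3$, conjugacy of $u\varepsilon,v\varepsilon,uv\varepsilon$ inside $\mathbf{p6}$ for columns 3--5, Lemma \ref{RestInd} for the last column, and the same elementary kernel computation. The only (harmless) difference is that the paper exhibits explicit conjugators ($R$ for $v\varepsilon$ and $uR^2$ for $uv\varepsilon$) where you argue via the $C_6$-orbit on $(\Z/2\Z)^2$; your explicit check $R^{-1}(vR^3)R=uR^3$ matches the paper's.
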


{\bf Proof:} Again the kernel is easily obtained. We construct the matrix column by column.
\begin{itemize}
\item Clear.
\item $\iota_{\mathbf{p2}}^{\mathbf{p6}}(\frac{1+\varepsilon}{2})=\frac{1+R^3}{2}=p_0^\dagger+p_2^\dagger+p_4^\dagger$.
\item $\iota_{\mathbf{p2}}^{\mathbf{p6}}(\frac{1+u\varepsilon}{2})=r^\dagger$.
\item $\iota_{\mathbf{p2}}^{\mathbf{p6}}(\frac{1+v\varepsilon}{2})=\frac{1+vR^3}{2}$ and $R^{-1}(\frac{1+vR^3}{2})R=r^\dagger$.
\item $\iota_{\mathbf{p2}}^{\mathbf{p6}}(\frac{1+uv\varepsilon}{2})=\frac{1+uvR^3}{2}$ and $(uR^2)^{-1}(\frac{1+uvR^3}{2})uR^2=r^\dagger$.
\item The last column follows from Lemma \ref{RestInd}.
\hfill$\square$
\end{itemize}

\medskip
{\bf Proof of Theorem \ref{Walt}:} By the exact sequence (\ref{PiNaunfold}), the group $K_1(C^*_r(SA))$ is isomorphic to $\ker(\iota_{\mathbf{p2}}^{\mathbf{p4}},\iota_{\mathbf{p2}}^{\mathbf{p6}})=\ker\iota_{\mathbf{p2}}^{\mathbf{p4}}\cap\ker\iota_{\mathbf{p2}}^{\mathbf{p6}}$. By Lemmas \ref{Indtop4} and \ref{Indtop6}, this is the infinite cyclic subgroup of $\Z^6$ generated by $(0,0,1,-1,0,0)$. We also see that the image $Im(\iota_{\mathbf{p2}}^{\mathbf{p4}},\iota_{\mathbf{p2}}^{\mathbf{p6}})$ is free abelian of rank 5.

On the other hand the map $(\iota_{\mathbf{p2}}^{\mathbf{p4}},\iota_{\mathbf{p2}}^{\mathbf{p6}}):\Z^6\rightarrow\Z^9\oplus\Z^{10}$ is given by the $19\times 6$ matrix obtained by stacking the matrices of $\iota_{\mathbf{p2}}^{\mathbf{p4}},\iota_{\mathbf{p2}}^{\mathbf{p6}}$ on top of each other. Each column in this matrix is a primitive vector in $\Z^{19}$ (for the 6th column, use the fact that 2 and 3 are coprime). So $K_0(C^*_r(SA))\simeq \Z^{19}/Im(\iota_{\mathbf{p2}}^{\mathbf{p4}},\iota_{\mathbf{p2}}^{\mathbf{p6}})$ is free abelian of rank $19-5=14$.
\hfill$\square$

\medskip
From the proof of Theorem \ref{Walt}, we obtain that

\begin{Cor}\label{RHSforSA}  
Let $SA=\Z^2 \rtimes SL_2(\Z)$.
 A basis for $K_0(C^*_r(SA))\simeq\Z^{14}$ is given by
$$\iota_{\mathbf{p6}}^{SA}(B_6)\cup\iota_{\mathbf{p4}}^{SA}(\bigl\{[p_0],[p_1],[q_0],[q_1]\bigr\}).$$
Further, the assembly map $\mu_0^{SA}\colon K_0^{SA}(\underline{E}SA)\rightarrow K_0(C^*_r(SA))$ is an isomorphism.
\end{Cor}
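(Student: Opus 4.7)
The plan is to derive both statements from the explicit cokernel computation already set up in the proof of Theorem \ref{Walt}, combined with naturality of the Baum-Connes assembly map with respect to the amalgamated decomposition $SA = \mathbf{p4} \ast_{\mathbf{p2}} \mathbf{p6}$.

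For the basis statement, I would revisit the Smith normal form analysis of the $19 \times 6$ matrix of $(\iota_{\mathbf{p2}}^{\mathbf{p4}}, \iota_{\mathbf{p2}}^{\mathbf{p6}})$. This map has rank $5$, and its image yields five independent relations among the nineteen candidate generators $\iota_{\mathbf{p4}}^{SA}(B_4) \cup \iota_{\mathbf{p6}}^{SA}(B_6)$ of $K_0(C^*_r(SA))$. The first four relations come from the basis vectors $[1]$, $[\frac{1+\varepsilon}{2}]$, $[\frac{1+u\varepsilon}{2}]$ (note that $[\frac{1+v\varepsilon}{2}]$ yields the same relation, which accounts for the kernel of the matrix) and $[\frac{1+uv\varepsilon}{2}]$ of $B_2$. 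I would use these, as read directly from Lemmas \ref{Indtop4} and \ref{Indtop6}, to express $\iota_{\mathbf{p4}}^{SA}([1])$, $\iota_{\mathbf{p4}}^{SA}([r])$, $\iota_{\mathbf{p4}}^{SA}([p_2])$ and $\iota_{\mathbf{p4}}^{SA}([q_2])$ as integer combinations of the desired fourteen elements. The fifth relation comes from $F_2 \in B_2$ and couples $\iota_{\mathbf{p4}}^{SA}(F_4)$ with $\iota_{\mathbf{p6}}^{SA}(F_6)$ via coefficients $2$ and $-3$ respectively; since the Smith normal form of the full $19 \times 6$ matrix has invariant factors all equal to $1$ (precisely the content of the remark in the proof of Theorem \ref{Walt} that the sixth column is primitive, using $\gcd(2,3)=1$), a Bezout-type change of basis closes out the elimination and leaves the fourteen elements of the statement as a free $\mathbb{Z}$-basis.

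For the isomorphism claim, I would use naturality of the assembly map to produce a commutative ladder intertwining the Pimsner six-term exact sequence of Theorem \ref{PiNa} on the right-hand side with the equivariant $K$-homology Mayer-Vietoris sequence (combining Theorems \ref{Vietoris} and \ref{Mislin}) on the left-hand side. The vertical arrows attached to the wallpaper subgroups $\mathbf{p2}, \mathbf{p4}, \mathbf{p6}$ are the assembly maps for those groups, which are isomorphisms since each of them is Haagerup and therefore satisfies Baum-Connes by Higson-Kasparov \cite{HiKa}; alternatively these isomorphisms may be read off directly from \cite{ELPW} and \cite{Flores} together. A Five Lemma argument in the appropriate degree then forces $\mu_0^{SA}$ to be an isomorphism.

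The step I expect to be the main obstacle is the concluding stage of the basis identification: because the coefficient of $\iota_{\mathbf{p4}}^{SA}(F_4)$ in the relation coming from $F_2$ is $2$ rather than $1$, one cannot simply substitute and drop $F_4$. The coprimality $\gcd(2,3)=1$, combined with the freeness of the cokernel, must be leveraged carefully via an explicit integer change of basis in order to confirm that $\iota_{\mathbf{p6}}^{SA}(F_6)$ (together with the other thirteen listed elements) really generates all of $K_0(C^*_r(SA))$ rather than only a finite-index sublattice. Tracking this last column operation honestly is the delicate point of the proof.
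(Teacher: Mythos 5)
You have put your finger on the right spot, but the step you flag does in fact fail, so this is a genuine gap and not just a bookkeeping chore. By Lemma \ref{RestInd} the $F_4$-coefficient of $\iota_{\mathbf{p2}}^{\mathbf{p4}}(F_2)$ is exactly $2$ and the $F_6$-coefficient of $\iota_{\mathbf{p2}}^{\mathbf{p6}}(F_2)$ is exactly $3$, and these are the \emph{only} occurrences of $F_4$ and $F_6$ in the five vectors spanning $\mathrm{Im}(\iota_{\mathbf{p2}}^{\mathbf{p4}},\iota_{\mathbf{p2}}^{\mathbf{p6}})=\ker(\iota_{\mathbf{p4}}^{SA}-\iota_{\mathbf{p6}}^{SA})$. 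Consequently every element of the relation lattice has even $F_4$-coordinate, so $\iota_{\mathbf{p4}}^{SA}(F_4)$ cannot lie in the subgroup $S$ generated by the fourteen listed elements: after using the first four relations to absorb $\iota_{\mathbf{p4}}^{SA}([1]),\iota_{\mathbf{p4}}^{SA}([p_2]),\iota_{\mathbf{p4}}^{SA}([q_2]),\iota_{\mathbf{p4}}^{SA}([r])$, the fifth relation only yields $2\,\iota_{\mathbf{p4}}^{SA}(F_4)\equiv \pm 3\,\iota_{\mathbf{p6}}^{SA}(F_6)$ modulo the other thirteen elements, so $K_0(C^*_r(SA))/S$ is cyclic of order $2$, generated by the class of $\iota_{\mathbf{p4}}^{SA}(F_4)$. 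What $\gcd(2,3)=1$ really buys is different from what you assert: setting $t=\iota_{\mathbf{p4}}^{SA}(F_4)-\iota_{\mathbf{p6}}^{SA}(F_6)$, one gets $\iota_{\mathbf{p6}}^{SA}(F_6)\equiv 2t$ and $\iota_{\mathbf{p4}}^{SA}(F_4)\equiv 3t$ modulo the span of the thirteen elements other than $\iota_{\mathbf{p6}}^{SA}(F_6)$, so it is $t$ (or any element mapping to a generator of that rank-one quotient), not $\iota_{\mathbf{p6}}^{SA}(F_6)$ itself, that completes those thirteen to a $\Z$-basis. Your Bezout step therefore cannot terminate in the fourteen elements as stated; you would need a relation with odd $F_4$-coefficient, which the Pimsner sequence does not provide. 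Be aware that the paper's own justification at this point is the one-line ``follows from examination of those relations'', so your elimination scheme is exactly the published argument made explicit, and the index-two subtlety you identified is precisely where it needs repair.

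For the isomorphism statement your route differs from the paper's and is workable, but not for the reason you give: Theorem \ref{Vietoris} is a Mayer--Vietoris sequence in Bredon homology, and Mislin's exact sequence (Theorem \ref{Mislin}) does not splice with it to produce a Mayer--Vietoris sequence in equivariant K-homology that the assembly maps are known to intertwine with Theorem \ref{PiNa}. The intertwining you need is the $KK^G(\underline{E}G,C_0(X_i))$ sequence attached to the Bass--Serre tree and its compatibility with Pimsner's sequence, i.e.\ Oyono-Oyono's theorem, already packaged in the paper as Proposition \ref{FromOyono}; with that reference and BC for the (amenable) wallpaper stabilizers, your Five Lemma argument gives $\mu_i^{SA}$ an isomorphism in both degrees, essentially re-running the general proof of BC for groups acting on trees. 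The paper instead argues concretely in degree $0$: it uses the basis statement to see that $\iota_{\mathbf{p6}}^{SA}$ is injective on $K_0$, Corollary \ref{injp6} for the matching injectivity on the left-hand side, naturality of the assembly map for the inclusion $\mathbf{p6}\hookrightarrow SA$, and the fact that the remaining generators come from $H_0^{\mathfrak F}$ and are characters of finite subgroups sent by $\mu_0$ to the corresponding projections (Example 2.11 in \cite{MV03}). That version buys an explicit matching of generators on the two sides, which is the purpose of the corollary, whereas your ladder argument proves the isomorphism abstractly without identifying where any generator goes.
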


{\bf Proof:} 
    By $Im(\iota_{\mathbf{p2}}^{\mathbf{p4}}\iota_{\mathbf{p2}}^{\mathbf{p6}})=\ker(\iota_{\mathbf{p4}}^{SA}-\iota_{\mathbf{p6}}^{SA})$, 
    we have that the relations defining $(\Z^{9}\oplus\Z^{10})/Im(\iota_{\mathbf{p2}}^{\mathbf{p4}},\iota_{\mathbf{p2}}^{\mathbf{p6}})$ are given by
$$
\iota_{\mathbf{p4}}^{SA}[1]=\iota_{\mathbf{p6}}^{SA}[1]
$$
$$
\iota_{\mathbf{p4}}^{SA}(p_0+p_2)=\iota_{\mathbf{p6}}^{SA}(p_0^\dagger+p_2^\dagger+p_4^\dagger)
$$
$$
\iota_{\mathbf{p4}}^{SA}(r)=\iota_{\mathbf{p6}}^{SA}(r^\dagger)
$$
$$
\iota_{\mathbf{p4}}^{SA}(q_0+q_2)=\iota_{\mathbf{p6}}^{SA}(r^\dagger)
$$
$$
\iota_{\mathbf{p4}}^{SA}(c)=\iota_{\mathbf{p6}}^{SA}(c^\dagger),
$$
where $c$ (resp. $c^\dagger$) denotes the last column of $\iota_{\mathbf{p2}}^{\mathbf{p4}}$ (resp. $\iota_{\mathbf{p2}}^{\mathbf{p6}}$). The conclusion about the basis follows from examination of those relations. To prove the second statement, note that
by the first part, the map $\iota_{\mathbf{p6}}^{SA}: K_0(C^*(\mathbf{p6}))\rightarrow K_0(C^*_r(SA))$ is injective. The same holds on the left-hand side of the BC, by Corollary \ref{injp6}, that can be rephrased by saying that the map $K_0^{\mathbf{p6}}(\underline{E}\mathbf{p6})\rightarrow K_0^{SA}(\underline{E}SA)$ is injective. Since $\mu_{\mathbf{p6}}$ is an isomorphism, by naturality of the assembly map with respect to the inclusion $\mathbf{p6}\hookrightarrow SA$, we have that $\mu_{SA}$ is an isomorphism from $Im(K_0^{\mathbf{p6}}(\underline{E}\mathbf{p6})\rightarrow K_0^{SA}(\underline{E}SA))$ onto $\text{Im}(\iota_{\mathbf{p6}}^{SA})=\Z^{10}$. Moreover, due to the fact that the four remaining generators of $K_0^G(\underline{E}{SA})$ come from $H_0^{\mathfrak F}({SA},R_{\mathbb{C}})$,  and hence are given by characters $\chi$ of suitable finite abelian subgroups $A$ of ${SA}$, we may use the fact that $\mu_{SA}$ maps $\chi$ to the associated projection in the group ring $\C A$ (see Example 2.11 in Part 2 of \cite{MV03}). This completes the proof.
\hfill$\square$


\section{K-homology and K-theory of \texorpdfstring{$GA=\Z^2\rtimes GL_2(\Z)$}{GA=Z\textasciicircum x GL\_2(Z)}}

\subsection{The left hand side: equivariant K-homology of \texorpdfstring{$GA$}{GA}}

\label{BredonGA}

\subsubsection{Preliminaries}

In this section we compute the left-hand side of the Baum-Connes assembly map for the general affine group $GA=\Z^2 \rtimes GL_2(\Z)$. As in the case of $SA$, we will compute Bredon homology with coefficients in the complex representation ring functor $R_{\mathbb{C}}$, and then we deduce its equivariant $K$-homology from the Atiyah-Hirzebruch spectral sequence. By Corollary \ref{3Dmodel} there is  a 3-dimensional model for $\underline{E} GA$, so by Theorem \ref{Mislin} the spectral sequence reduces to an exact sequence. An important observation is that $GA$ decomposes as a push-out of wallpaper groups

$$GA = \mathbf{p4m}\ast_{\mathbf{cmm}} \mathbf{p6m}.$$

Let us elaborate on this decomposition. 
As we recalled in Section \ref{GL2amalgam}, the group $GL_2(\Z)$ can be viewed as an amalgam of the dihedral groups $D_6$ and $D_4$ through a diagonal copy of $D_2$, this decomposition being realized by considering the following explicit subgroups of $GL_2(\mathbb{Z})$
\begin{equation}\label{D6}
D_6=\Big\langle 
\begin{pmatrix}
 0 & 1 \\-1 & 1   
\end{pmatrix}, 
\begin{pmatrix}
 0 & 1 \\ 1 & 0   
\end{pmatrix}
\Big\rangle 
\end{equation}
\begin{equation}\label{D4}
    D_4= \Big\langle 
\begin{pmatrix}
 0 & 1 \\-1 & 0   
\end{pmatrix}, 
\begin{pmatrix}
 0 & 1 \\ 1 & 0   
\end{pmatrix}
\Big\rangle 
\end{equation}

\begin{equation}\label{D2}
    D_2=\Big\langle 
\begin{pmatrix}
 -1 & 0 \\0& -1 
\end{pmatrix}, 
\begin{pmatrix}
 0 & 1 \\ 1 & 0   
\end{pmatrix}
\Big\rangle 
\end{equation}

Moreover, the pullback of the diagram $ D_2 \hookrightarrow GL_2(\Z) \leftarrow  GA$ is isomorphic to $\mathbf{cmm}$, the pullback of $ D_4 \hookrightarrow GL_2(\Z) \leftarrow  GA$ is isomorphic to $\mathbf{p4m}$, and the pullback of $ D_6 \hookrightarrow GL_2(\Z) \leftarrow  GA$ is isomorphic to $\mathbf{p6m}$. Now, the inclusions $\mathbf{p4m}\hookrightarrow GA$ and $\mathbf{p6m}\hookrightarrow GA$ of the pullbacks induce a homomorphism $\mathbf{p4m}\ast \mathbf{p6m}\rightarrow GA$ which is surjective and factors through another homomorphism $\mathbf{p4m}\ast_{\mathbf{cmm}} \mathbf{p6m}\rightarrow GA$. It is not hard to see that the latter homomorphism is an isomorphism.




Our first aim is to compute the Bredon homology groups $H^{\mathfrak F}_*(GA,R_{\mathbb{C}})$. As in the previous case, we will obtain the Bredon groups of $GA$ from that of \textbf{p4m}, \textbf{p6m} and \textbf{cmm} via a Mayer-Vietoris argument.

\subsubsection{Bredon homology and equivariant K-homology of \texorpdfstring{$GA$}{GA}}

As with the case of $SA$, we will intensively use the results and notation in \cite{Flores}, and in particular that of sections 3.9, 3.11 and 3.17, which concern the wallpaper groups of interest here. Remark that all the necessary notation is described at the beginning of its Section 3. We recall in Table \ref{Bredon homology 2} the relevant Bredon homology groups. As we did with the wallpaper groups involved in the structure of $SA$, we briefly explain the notation of the table.

\begin{itemize}

\item For $\mathbf{cmm}$, there are three classes $e_i$, $0\leq i\leq 2$ of 0-equivariant cells whose stabilizers are isomorphic to $D_2$, $D_2$ and $C_2$, respectively. Then $\alpha_i^j$ denotes a generator of the copy of $R_{\C}(D_2)=\Z^4$ corresponding to $e_i$, $i=0,1$, and $\alpha_2^j$ denotes a generator of the copy of $R_{\C}(C_2)$ corresponding to $e_2$.

\item For $\mathbf{p4m}$, there are three classes $e_i$, $0\leq i\leq 2$ of 0-equivariant cells whose stabilizers are isomorphic to $D_4$, $D_4$ and $D_2$, respectively. Then $\alpha_i^j$ denotes a generator of the copy of $R_{\C}(D_4)=\Z^5$ corresponding to $e_i$, $i=0,1$, and $\alpha_2^j$ denotes a generator of the copy of $R_{\C}(D_2)$ corresponding to $e_2$.

\item For $\mathbf{p6}$, there are three classes $e_i$, $0\leq i\leq 2$ of 0-equivariant cells whose stabilizers are isomorphic to $D_6$, $D_3$ and $D_2$, respectively. Then $\alpha_0^j$ denotes a generator of the copy of $R_{\C}(D_6)=\Z^6$ corresponding to $e_0$, $\alpha_1^j$ denotes a generator of the copy of $R_{\C}(D_3)=\Z^3$ corresponding to $e_1$, and $\alpha_2^j$ denotes a generator of the copy of $R_{\C}(D_2)$ corresponding to $e_2$. 

\end{itemize}

In all of the characters the superscript $0$ stands for the trivial representation.

\begin{table}[t]
\begin{center}
\begin{tabular}{| c | c | c | c | c |}
\hline
Group & $H_2^{\mathfrak{F}}$ & $H_1^{\mathfrak{F}}$ & $H_0^{\mathfrak{F}}$ & Basis $H_0^{\mathfrak{F}}$ \\ \hline

$\mathbf{cmm} $  & $ 0 $ & $ 0 $ & $ \Z^6 $ & $ ([\alpha_0^1+\alpha_0^2], [\alpha_0^3], [\alpha_0^4], [\alpha_1^1], [\alpha_1^3], [\alpha_2^2]) $\\ \hline
$\mathbf{p4m} $  & $ 0 $ & $ 0 $ & $ \Z^9 $ & $ ([\alpha_0^4],[\alpha_0^5],[\alpha_1^3],[\alpha_1^4],[\alpha_1^5],[\alpha_2^1],[\alpha_2^2],[\alpha_2^3],[\alpha_2^4]) $\\ \hline
$\mathbf{p6m} $ & $ 0 $ & $ 0 $  & $ \Z^8 $ & $ ([\alpha_0^4],[\alpha_0^5],[\alpha_0^6],[\alpha_1^1],[\alpha_1^3],[\alpha_2^1],[\alpha_2^3],[\alpha_2^4])  $\\ \hline

\end{tabular}
\caption{Bredon homology}
\label{Bredon homology 2}
\end{center}
\end{table}

Due to the decomposition of $GA$ as an amalgam, we may apply the Mayer-Vietoris exact sequence to compute the Bredon homology groups. Thanks to the fact that Bredon homology of wallpaper groups vanishes above dimension 2 and that there is a $3$-dimensional model for $\underline{E}GA$ (Corollary \ref{3Dmodel}), this sequence reduces to 
$$ 
\footnotesize{
\begin{tikzcd}
  &  
  \hskip 13em 0
  \rar 
  &  
  H_3^{\mathfrak F}(\underline{E}GA,R_{\mathbb{C}}) 
   \ar[draw=none]{d}[name=X, anchor=center]{}
 \ar[rounded corners,
            to path={ -- ([xshift=2ex]\tikztostart.east)
                      |- (X.center) \tikztonodes
                      -| ([xshift=-2ex]\tikztotarget.west)
                      -- (\tikztotarget)}]{dll}[at end,above]{} 
                      \\    
  H_2^{\mathfrak F}(\underline{E}\mathbf{cmm},R_{\mathbb{C}})
  \rar
  & 
  H_2^{\mathfrak F}(\underline{E}\mathbf{p4m},R_{\mathbb{C}})\oplus H_2^{\mathfrak F}(\underline{E}\mathbf{p6m},R_{\mathbb{C}})
  \rar
  &
  H_2^{\mathfrak F}(\underline{E}GA,R_{\mathbb{C}})
             \ar[draw=none]{d}[name=X, anchor=center]{}
 \ar[rounded corners,
            to path={ -- ([xshift=2ex]\tikztostart.east)
                      |- (X.center) \tikztonodes
                      -| ([xshift=-2ex]\tikztotarget.west)
                      -- (\tikztotarget)}]{dll}[at end,above]{} 
\\                 
  H_1^{\mathfrak F}(\underline{E}\mathbf{cmm}, R_{\mathbb{C}})
  \rar
  &
  H_1^{\mathfrak F}(\underline{E}\mathbf{p4m}, R_{\mathbb{C}})
  \oplus 
  H_1^{\mathfrak F}(\underline{E}\mathbf{p6m},R_{\mathbb{C}})
  \rar
  &
  H_1^{\mathfrak F}(\underline{E}GA, R_{\mathbb{C}})
  			 \ar[draw=none]{d}[name=X, anchor=center]{}
  \ar[rounded corners,
            to path={ -- ([xshift=2ex]\tikztostart.east)
                      |- (X.center) \tikztonodes
                      -| ([xshift=-2ex]\tikztotarget.west)
                      -- (\tikztotarget)}]{dll}[at end,above]{}
 \\
 H_0^{\mathfrak F}(\underline{E}\mathbf{cmm}, R_{\mathbb{C}})
 \rar
 &
 H_0^{\mathfrak F}(\underline{E}\mathbf{p4m}, R_{\mathbb{C}})
 \oplus 
 H_0^{\mathfrak F}(\underline{E}\mathbf{p6m}, R_{\mathbb{C}})
 \rar
 &
 H_0^{\mathfrak F}(\underline{E} GA, R_{\mathbb{C}})
\rar
&
0
\end{tikzcd}
}
$$
Table \ref{Bredon homology 2} reveals that the Bredon homology groups of these wallpaper groups vanish above degree $0$, therefore 
\begin{equation} \label{LHS}
\footnotesize{
\begin{tikzcd}
&  
  \hskip 13em 0
  \rar 
  &  
H_1^{\mathfrak F}(\underline{E}GA, R_{\mathbb{C}})
  			 \ar[draw=none]{d}[name=X, anchor=center]{}
  \ar[rounded corners,
            to path={ -- ([xshift=2ex]\tikztostart.east)
                      |- (X.center) \tikztonodes
                      -| ([xshift=-2ex]\tikztotarget.west)
                      -- (\tikztotarget)}]{dll}[at end,above]{}
 \\
 H_0^{\mathfrak F}(\underline{E}\mathbf{cmm}, R_{\mathbb{C}})
 \rar
 &
 H_0^{\mathfrak F}(\underline{E}\mathbf{p4m}, R_{\mathbb{C}})
 \oplus 
 H_0^{\mathfrak F}(\underline{E}\mathbf{p6m}, R_{\mathbb{C}})
 \rar
 &
 H_0^{\mathfrak F}(\underline{E} GA, R_{\mathbb{C}})
\rar
&
0
\end{tikzcd}
}
\end{equation}
and in particular we have
\begin{equation} \label{MV}
   0\rightarrow H_1^{\mathfrak F}(\underline{E} GA, R_{\mathbb{C}})\rightarrow \mathbb{Z}^6 \stackrel{f}{\rightarrow} \mathbb{Z}^9 \oplus \mathbb{Z}^8 \stackrel{g}{\rightarrow} 
   H_0^{\mathfrak F}(\underline{E}GA,R_{\mathbb{C}})\rightarrow 0 
\end{equation}

In order to compute the Bredon homology of $GA$, we need to understand the connecting homomorphisms induced in the 0-th group by the inclusions of \textbf{cmm} in \textbf{p4m} and \textbf{p6m}. The arguments will be similar to the ones used in Section \ref{BredonSA}.

We consider the model $X$ of $\underline{E}\mathbf{p4m}$ described in \cite[Section 3.11]{Flores} as a model for $\underline{E}\mathbf{cmm}$ with the induced action (see Figure 1).
As a $\mathbf{cmm}$-CW-complex, there are exactly three representatives $v_0^0$, $v_0^1$ and $v_0^2$ of equivariant 0-cells, corresponding respectively to the vertices $O$, $P$ and $Q$ of $X$. 
The $G$-map $\underline{E}\mathbf{cmm}\rightarrow \underline{E}\mathbf{p4m}$ maps $v_0^i$ to $e_0^i$ for $0\leq i \leq 2$, hence the homomorphism between the 0-chains of \textbf{cmm} and \textbf{p4m} induced by inclusion is given by  
$$
\bigoplus_{i=0}^2 R_{\mathbb{C}}(v_0^i)\rightarrow \bigoplus_{i=0}^2 R_{\mathbb{C}}(e_0^i).
$$ 
Observe that $v_0^0$, $v_0^1$ and $v_0^2$ are identified with $e_0^0$, $e_0^1$ and $e_0^2$ (of Section 3.9 in \cite{Flores}), therefore $stab(v_0^0)\simeq stab(v_0^1)\simeq D_2$, while $stab(v_0^2) \simeq C_2$. In turn, $stab(e_0^0) = stab(e_0^1) \simeq D_4$ in \textbf{p4m}, while $stab(e_0^2)=D_2$. 
Note that the representation ring of the above groups are free abelian (see Table 2 in \cite{Flores}).
Denote by $\delta_j^i$ the irreducible characters generating 
$R_{\mathbb{C}}(stab(v_0^j))$, and by $\alpha_j^i$ that of 
$R_{\mathbb{C}}(stab(e_0^i))$. The above homomorphism between chain complexes can be explicitly described as
$$ 
\bigoplus_{i=1}^4\Z\delta_0^i\bigoplus_{i=1}^4\Z\delta_1^i\bigoplus_{i=1}^2\Z\delta_2^i\rightarrow  \bigoplus_{i=1}^5\Z\alpha_0^i\bigoplus_{i=1}^5\Z\alpha_1^i\bigoplus_{i=1}^4\Z\alpha_2^i.
$$

In general, the characters with superscript 1 denote the ones induced by the trivial representation. In turn, the characters with superscript 5 denote the ones associated with degree 2 representations of $D_4$. 
Observe that for $j=0,1$, the induced character by $\delta_j^i$ via the inclusion $D_2<D_4$ is $\alpha_j^1+\alpha_j^3$ for $i=1$, it is
$\alpha_j^2+\alpha_j^4$ for $i=2$, 
and it is $\alpha_j^5$ for $i=3$ or $i=4$. 
(The induced characters via $C_2<D_2$ are described in Table 3 in \cite{Flores})

Now Table \ref{Bredon homology 2} gives explicit bases of 
$H_0^{\mathfrak{F}}(\underline{E}\mathbf{cmm}, R_{\mathbb{C}})$ and $H_0^{\mathfrak{F}}(\underline{E}\mathbf{p4m},R_{\mathbb{C}})$ (replacing $\alpha$ by $\delta$ in the first case). 
Our considerations about the $\mathbf{cmm}$- and $\mathbf{p4m}$-CW structures of $X$ and the induction homomorphisms, together with the description of $\Phi_1$ from Sections 3.9 and 3.11 of \cite{Flores} imply that the matrix describing  the homomorphism 
$H_0^{\mathfrak{F}}(\underline{E}\mathbf{cmm}, R_{\mathbb{C}})\rightarrow H_0^{\mathfrak{F}}(\underline{E}\mathbf{p4m}, R_{\mathbb{C}})$ is

$$
\begin{pmatrix}
  0 & 0 & 0 & 0 & 0 & 0 \\
  -2 & 1 & 1 & 0 & 0 & 0 \\
  0  & 0 & 0 & 1 & 0 & 0 \\
  0 & 0 & 0 & -1 & 0 & 0 \\
   0 & 0 & 0 & -1 & 1 & 0 \\
   1 & 0 & 0 & 1 & 0 & 0 \\
   1 & 0 & 0 & 1 & 0 & 0 \\
   1 & 0 & 0 & 0 & 0 & 1 \\
   1 & 0 & 0 & 0 & 0 & 1
  \end{pmatrix}
$$

A similar argument applies to the inclusion $\mathbf{cmm}<\mathbf{p6m}$. 
In this case, considering the model $X$ of $\underline{E}{\mathbf{p6m}}$ pictured in Figure 1 as a model for $\underline{E}\mathbf{cmm}$, there are four classes $v_0^0$, $v_0^1$, $v_0^2$ and $v_0^3$ of representatives of $\mathbf{cmm}$-equivariant 0-cells in $X$.
The cells $v_0^0$ and $v_0^2$ correspond to the points $O$ and $R$ in \cite[Section 3.17]{Flores}, while $v_0^3$ corresponds to the center of $X$, and $v_0^1$ to the point $P$ in Section 3.17 of \cite{Flores}. Note that the center of $X$ and $R$ lie in the same orbit under the $\mathbf{p6m}$-action, but not under the $\mathbf{cmm}$-action.

Following the notation of Section 3.17 for the representatives $e_j^i$ of $\mathbf{p6m}$-equivariant 0-cells in $X$, the identity $\underline{E}\mathbf{cmm}\rightarrow \underline{E}\mathbf{p6m}$ maps $v_0^0$ to $e_0^0$, $v_0^1$ to $e_0^1$ and $v_0^2$ and $v_0^3$ to $e_0^2$ (because of the last sentence of the previous paragraph). 
By description given in Section 3 in \cite{Flores}, the stabilizers of $v_0^0$ and $v_0^1$ are isomorphic to $D_2$, the stabilizer of $v_0^2$ is isomorphic to $C_2$ and the stabilizer of $v_0^3$ is trivial.  
In turn, the stabilizers of $e_0^0$, $e_0^1$ and $e_0^2$ are, respectively, isomorphic to $D_6$, $D_3$ and $D_2$.

Denote by $\delta_j^i$ the irreducible characters generating $R_{\mathbb{C}}(stab(v_0^j))$ and by $\alpha_j^i$ the irreducible characters generating $R_{\mathbb{C}}(stab(e_0^i))$, the chain homomorphism at level zero decomposes as

$$ \bigoplus_{i=1}^4\Z\delta_0^i\bigoplus_{i=1}^4\Z\delta_1^i\bigoplus_{i=1}^2\Z\delta_2^i\oplus \Z\delta_3^1\rightarrow  \bigoplus_{i=1}^6\Z\alpha_0^i\bigoplus_{i=1}^3\Z\alpha_1^i\bigoplus_{i=1}^4\Z\alpha_2^i.$$

As before the characters with the superscript 1 are associated to the trivial representation, and $\alpha_5^0$ and $\alpha_6^0$ are used for the 2-dimensional ones.

The only induction that cannot be read off from Table 3 in \cite{Flores} is the one given from $D_2<D_6$. It takes into account the character tables of $D_2$ and $D_6$, and it is defined as follows. 
The induced character by $\delta_0^1$ via $D_2<D_6$ is $\alpha_0^1+\alpha_0^6$, by $\delta_0^2$ is $\alpha_0^2+\alpha_0^5$, by $\delta_0^3$ is $\alpha_0^3+\alpha_0^5$, and by $\delta_0^4$ is $\alpha_0^4+\alpha_0^5$.

Now changing the notation from $\alpha$ to $\delta$ in the case of $\mathbf{cmm}$, Table \ref{Bredon homology 2} provides bases for $H_0^{\mathfrak{F}}(\underline{E}\mathbf{cmm},R_{\mathbb{C}})$ and $H_0^{\mathfrak{F}}(\underline{E}\mathbf{p6m},R_{\mathbb{C}})$ (as in the computation of the homomorphism induced by $\mathbf{p2}<\mathbf{p6}$, the image of $\delta_3^1$ does not contribute). 
Putting together the information above, the matrix associated with the homomorphism and w.r.t. the bases above is

$$
\begin{pmatrix}
  -2 & 1 & 1 & 0 & 0 & 0 \\
  -2 & 1 & 1 & 0 & 0 & 0 \\
  0  & 0 & 0 & 0 & 0 & 0 \\
  0 & 1 & 0 & 0 & 0 & 0 \\
   0 & 1 & 0 & 0 & 0 & 0 \\
   1 & -1 & 0 & 1 & 0 & 0 \\
   2 & -1 & 0 & 0 & 1 & 1 \\
   1 & 0 & 0 & 0 & 0 & 1
  \end{pmatrix}
$$

Stacking on top of each other the two matrices describing the induction in Bredon homology, and respecting the change of sign for the second matrix, we get the matrix associated with the homomorphism $f$ of the Mayer-Vietoris exact sequence \eqref{MV}:
 
$$
\begin{pmatrix}
 0 & 0 & 0 & 0 & 0 & 0 \\
  -2 & 1 & 1 & 0 & 0 & 0 \\
  0  & 0 & 0 & 1 & 0 & 0 \\
  0 & 0 & 0 & -1 & 0 & 0 \\
   0 & 0 & 0 & -1 & 1 & 0 \\
   1 & 0 & 0 & 1 & 0 & 0 \\
   1 & 0 & 0 & 1 & 0 & 0 \\
   1 & 0 & 0 & 0 & 0 & 1 \\
   1 & 0 & 0 & 0 & 0 & 1 \\
  2 & -1 & -1 & 0 & 0 & 0 \\
  2 & -1 & -1 & 0 & 0 & 0 \\
  0  & 0 & 0 & 0 & 0 & 0 \\
  0 & -1 & 0 & 0 & 0 & 0 \\
   0 & -1 & 0 & 0 & 0 & 0 \\
   -1 & 1 & 0 & -1 & 0 & 0 \\
   -2 & 1 & 0 & 0 & -1 & -1 \\
   -1 & 0 & 0 & 0 & 0 & -1

  \end{pmatrix}
$$

The invariant factors of this matrix are $(1,1,1,1,1,1)$, and in particular the Mayer-Vietoris homomorphism is injective. This yields

\begin{Prop}
The only non-trivial Bredon homology group of $GA$ is 
$$H_0^{\mathfrak F}(\underline{E} GA,R_{\mathbb{C}})=\mathbb{Z}^{11}.$$
\end{Prop}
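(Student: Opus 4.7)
The plan is to apply Theorem \ref{Vietoris} to the amalgam decomposition $GA = \mathbf{p4m} \ast_{\mathbf{cmm}} \mathbf{p6m}$ established in Section \ref{BredonGA} and extract the Bredon homology of $\underline{E}GA$ from the long exact sequence, using as input the already known Bredon homology of the three wallpaper groups (Table \ref{Bredon homology 2}). A preliminary reduction is that all three groups have vanishing Bredon homology in positive degrees, so the Mayer--Vietoris sequence collapses to the short exact sequence (\ref{MV})
\[
0 \longrightarrow H_1^{\mathfrak F}(\underline{E}GA, R_{\mathbb{C}}) \longrightarrow \mathbb{Z}^6 \xrightarrow{\;f\;} \mathbb{Z}^9 \oplus \mathbb{Z}^8 \xrightarrow{\;g\;} H_0^{\mathfrak F}(\underline{E}GA, R_{\mathbb{C}}) \longrightarrow 0,
\]
and simultaneously forces $H_2^{\mathfrak F}(\underline{E}GA, R_{\mathbb{C}}) = H_3^{\mathfrak F}(\underline{E}GA, R_{\mathbb{C}}) = 0$. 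So the whole problem reduces to computing the Mayer--Vietoris connecting map $f$ induced, by Lemma 3.21 of \cite{MV03}, by the subgroup inclusions $\mathbf{cmm} \hookrightarrow \mathbf{p4m}$ and $\mathbf{cmm} \hookrightarrow \mathbf{p6m}$.

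First I would take the $\mathbf{p4m}$-CW structure on $\mathbb{R}^2$ from \cite[Section 3.11]{Flores}, view it as a $\mathbf{cmm}$-CW structure via the inclusion, identify the three $\mathbf{cmm}$-orbits of 0-cells together with their stabilizers ($D_2, D_2, C_2$), and match them to the three $\mathbf{p4m}$-orbits (stabilizers $D_4, D_4, D_2$). Using the induction of characters from $D_2 \hookrightarrow D_4$ and $C_2 \hookrightarrow D_2$ (Table 3 in \cite{Flores}), together with the explicit bases of $H_0^{\mathfrak F}(\underline{E}\mathbf{cmm}, R_{\mathbb{C}})$ and $H_0^{\mathfrak F}(\underline{E}\mathbf{p4m}, R_{\mathbb{C}})$ of Table \ref{Bredon homology 2}, I would assemble the $9 \times 6$ matrix of the first component of $f$. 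Then I would perform the analogous computation for $\mathbf{cmm} \hookrightarrow \mathbf{p6m}$, being careful that in the $\mathbf{p6m}$ model of \cite[Section 3.17]{Flores} the $\mathbf{cmm}$-action breaks one $\mathbf{p6m}$-orbit of 0-cells into two, so there are four $\mathbf{cmm}$-orbits (with stabilizers $D_2, D_2, C_2, 1$), only three of which contribute to $H_0^{\mathfrak F}$. The induction map $D_2 \hookrightarrow D_6$, which is not in the tables of \cite{Flores}, would be computed directly from the character tables.

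Stacking the two resulting matrices (and flipping the sign of the second, as demanded by the Mayer--Vietoris convention) produces a $17 \times 6$ integer matrix representing $f$. The final step is to compute its Smith normal form. If all six invariant factors equal $1$, then $f$ is injective with torsion-free cokernel, which immediately yields $H_1^{\mathfrak F}(\underline{E}GA, R_{\mathbb{C}}) = 0$ and $H_0^{\mathfrak F}(\underline{E}GA, R_{\mathbb{C}}) = \mathbb{Z}^{17-6} = \mathbb{Z}^{11}$, as claimed.

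The main obstacle will be the bookkeeping in the middle step: correctly identifying how $\mathbf{p4m}$- and $\mathbf{p6m}$-orbits split under restriction to $\mathbf{cmm}$, tracking which basis elements of the representation rings survive the passage from $C_*^{\mathfrak{F}}$ to $H_0^{\mathfrak F}$ (some generators are killed by $\Phi_1$ of \cite[Sections 3.9, 3.11, 3.17]{Flores}), and computing the induction $D_2 \hookrightarrow D_6$ by hand. Once the matrix is assembled correctly, the Smith normal form computation is mechanical and the conclusion follows from pure linear algebra over $\mathbb{Z}$.
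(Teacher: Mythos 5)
Your proposal is correct and follows essentially the same route as the paper: the Mayer--Vietoris sequence for $GA=\mathbf{p4m}\ast_{\mathbf{cmm}}\mathbf{p6m}$ collapses to the four-term sequence \eqref{MV}, the two components of $f$ are computed via induction of characters using the CW-models and bases from \cite{Flores} (including the $D_2<D_6$ induction done by hand and the splitting of one $\mathbf{p6m}$-orbit into two $\mathbf{cmm}$-orbits), and the Smith normal form of the stacked $17\times 6$ matrix with invariant factors $(1,1,1,1,1,1)$ gives $H_1^{\mathfrak F}=0$ and $H_0^{\mathfrak F}=\mathbb{Z}^{11}$. This matches the paper's argument step for step.
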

Applying this to Theorem \ref{Mislin} we obtain
\begin{Cor}\label{LHSforGA}
The equivariant $K$-groups of $GA = \mathbb Z^2 \rtimes GL_2(\mathbb Z)$ are $$K_0^{GA}(\underline{E}GA)=\mathbb{Z}^{11} \quad \text{and} \quad K_1^{GA}(\underline{E}GA)=0.
$$
\end{Cor}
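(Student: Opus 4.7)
The plan is to deduce the corollary as a direct consequence of the preceding proposition on Bredon homology together with Theorem \ref{Mislin} (equivalently \ref{BredonK}). That theorem is applicable because, by Corollary \ref{3Dmodel}, the group $GA$ admits a 3-dimensional model for $\underline{E}GA$; hence the Atiyah--Hirzebruch spectral sequence of Davis--L\"uck collapses into the six-term exact sequence
\[
0 \to H_1^{\mathfrak F}(\underline{E}GA,R_{\mathbb{C}}) \to K_1^{GA}(\underline{E}GA) \to H_3^{\mathfrak F}(\underline{E}GA,R_{\mathbb{C}}) \to H_0^{\mathfrak F}(\underline{E}GA,R_{\mathbb{C}}) \to K_0^{GA}(\underline{E}GA) \to H_2^{\mathfrak F}(\underline{E}GA,R_{\mathbb{C}}) \to 0.
\]

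The next step is to assemble the four Bredon groups appearing in this sequence. The preceding proposition gives $H_0^{\mathfrak F}(\underline{E}GA,R_{\mathbb{C}}) = \mathbb{Z}^{11}$ and $H_1^{\mathfrak F} = H_2^{\mathfrak F} = 0$. It remains to observe that $H_3^{\mathfrak F}(\underline{E}GA,R_{\mathbb{C}}) = 0$ as well; this is visible from the very top of the Mayer--Vietoris long exact sequence displayed before \eqref{LHS}, where $H_3^{\mathfrak F}(\underline{E}GA,R_{\mathbb{C}})$ embeds into $H_2^{\mathfrak F}(\underline{E}\mathbf{p4m}) \oplus H_2^{\mathfrak F}(\underline{E}\mathbf{p6m})$, which is zero by Table \ref{Bredon homology 2}.

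Substituting these four values into the exact sequence, $K_1^{GA}(\underline{E}GA)$ is trapped between two trivial groups and hence vanishes, while $K_0^{GA}(\underline{E}GA)$ becomes isomorphic to $H_0^{\mathfrak F}(\underline{E}GA,R_{\mathbb{C}}) = \mathbb{Z}^{11}$ since both the incoming map from $H_3^{\mathfrak F} = 0$ and the outgoing map to $H_2^{\mathfrak F} = 0$ are zero. There is no real obstacle here: the work was already carried out in establishing the vanishing of the higher Bredon groups and the explicit computation of $H_0^{\mathfrak F}$ via Smith normal form in the preceding subsection. In particular, unlike the $SA$ case treated in Corollary \ref{LHSforSA}, no extension problem arises, so one does not need to pay special attention to the connecting map.
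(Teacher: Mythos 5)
Your argument is correct and is essentially the paper's: the corollary is deduced there in one line by feeding the Bredon computation (only $H_0^{\mathfrak F}(\underline{E}GA,R_{\mathbb{C}})=\mathbb{Z}^{11}$ nontrivial) into the six-term sequence of Theorem \ref{Mislin}, exactly as you do. One small slip: in the Mayer--Vietoris sequence the connecting map sends $H_3^{\mathfrak F}(\underline{E}GA,R_{\mathbb{C}})$ injectively into $H_2^{\mathfrak F}(\underline{E}\mathbf{cmm},R_{\mathbb{C}})$, not into $H_2^{\mathfrak F}(\underline{E}\mathbf{p4m},R_{\mathbb{C}})\oplus H_2^{\mathfrak F}(\underline{E}\mathbf{p6m},R_{\mathbb{C}})$ (the composite into the latter is zero by exactness), but since $H_2^{\mathfrak F}(\underline{E}\mathbf{cmm},R_{\mathbb{C}})$ also vanishes by Table \ref{Bredon homology 2}, your conclusion $H_3^{\mathfrak F}(\underline{E}GA,R_{\mathbb{C}})=0$ is unaffected.
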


A basis for $H_0^{\mathfrak F}(\underline{E} GA,R_{\mathbb{C}})$ can be obtained as the image under the homomorphism $g$ of the sequence \eqref{MV} of a basis of the cokernel of $f$ in the same sequence. Such a concrete basis, obtained as a byproduct of the computation of the Smith normal form, can be given in the following way. Denote by $\{e_1,\ldots e_{17}\}$ the basis of $H_0^{\mathfrak F}(\underline{E}\mathbf{p4m},R_{\mathbb{C}})\oplus H_0^{\mathfrak F}(\underline{E}\mathbf{p6m},R_{\mathbb{C}})$ in the order we have used when computing the previous matrix. Then a basis of 
$H_0^{\mathfrak F}(\underline{E} GA,R_{\mathbb{C}})$ is the image under $f$ of the vectors 
$$\{e_1,e_2-e_4,e_7-e_2,e_9,e_{10},e_{11},e_{12},e_{13},e_{14},e_{16},e_{17}\}.$$

\subsection{The right-hand side: K-theory of \texorpdfstring{$GA=\Z^2\rtimes GL_2(\Z)$}{GA=Z\textasciicircum x GL\_2(Z)}}

In this section we compute K-theory for $C^*_r(GA)$. This will describe the right-hand side of the assembly map for $\mathbb Z^2 \rtimes GL_2(\mathbb Z)$. Combining this with the description we obtained for the left-hand side in Section \ref{BredonGA}, we will be able to reprove that the Baum-Connes assembly map for this group is an isomorphism. 

\subsubsection{Strategy}
	Thanks to the decomposition of $GL_2(\mathbb Z)\simeq D_4\ast_{D_2} D_6$ as an amalgamated free product, its reduced group $C^* $-algebra can be viewed as 
	$$C^*_r(\mathbb Z^2 \rtimes GL_2(\mathbb Z)) \simeq C^*_r(\mathbb Z^2 \rtimes D_4) \ast_{C^*_r(\mathbb Z^2 \rtimes D_2)} C^*_r(\mathbb Z^2 \rtimes D_6).$$ 
    Using the notations for plane wallpaper groups, we will denote the group
	$\mathbb Z^2 \rtimes D_2$ by $\mathbf{cmm}$, the group $\mathbb Z^2 \rtimes D_4$ by $\mathbf{p4m}$ and the group $\mathbb Z^2 \rtimes D_6$ by $\mathbf{p6m}$. 
	Given the above decomposition, the Pimsner 6-term exact sequence (Theorem \ref{PiNa}) will be our main tool. In order to use it, we need to compute $K_i(C^*_r (\mathbf{cmm})), K_i(C^*_r(\mathbf{p4m}))$ and $K_i(C^*_r(\mathbf{p6m}))$. 
	The K-theory rank for these groups has been calculated by Yang \cite{Yang}. In particular, $K_1=0$ for these wallpaper groups.  
    The first goal of this section is to provide a basis for each of these $K_0$-groups.  After that, we plug in the information concerning the bases and analyze the simplified Pimsner
    6-term exact sequence below:
    \begin{equation}\label{simp-pimsner}
    \footnotesize{
	0 \longrightarrow K_1(C^*_r(GA)) \longrightarrow 
	  K_0(C^*_r(\mathbf{cmm})) \stackrel{(\iota_2 ^4, \iota_2^6)}{\longrightarrow} K_0(C^*_r(\mathbf{p4m})) \oplus K_0(C^*_r(\mathbf{p6m}))
	\stackrel{j_4 - j_6}{\longrightarrow} K_0(C^*_r(GA)) \longrightarrow 0,
 }
\end{equation}

    To achieve our first aim, we use that wallpaper groups are amenable and $K_1=0$. Let us first recall a fact that we will need in this section.
    \begin{Rem} \label{remark: minimal proj}
		Let $F$ be a finite group and $\rho \in \hat F$ be an irreducible representation. Associated to $\rho$ and for $\|\xi\|=1$, there is a projection $p_{F, \rho}(f)=\frac{deg(\rho)}{|F|}\langle{\rho(f)\xi, \xi}\rangle \in \mathbb CF$. We will use several times the fact that the class of $p_{F, \rho}$ in $K_0(C^*(F))$ is the image of the class of $\rho$ in $K_0^F(\underline{E}F)$ under the assembly map of $F$: see Example 2.11 in \cite{MV03} for all this. In particular, when $\rho$ runs along $\hat{F}$, the classes of the $p_{F, \rho}$'s form a basis of $K_0(C^*(F))$.
	\end{Rem}

The following lemma is perfectly general:

 \begin{Lem}\label{splitexactsequence} Let $N,H$ be arbitrary discrete groups, with $H$ acting on $N$ by automorphisms. Then, for $i=0,1$, the group $K_i(C^*(H))$ is a direct summand in $K_i(C^*(N\rtimes H))$ (Note that we work here with the full group $C^*$-algebras).
 \end{Lem}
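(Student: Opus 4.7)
The plan is to exploit the universal property of the full (maximal) group $C^*$-algebra, which makes it a functor from the category of discrete groups (with arbitrary group homomorphisms) to the category of $C^*$-algebras. This is precisely the reason the statement is phrased for full rather than reduced $C^*$-algebras: the quotient homomorphism we will use need not descend to reduced $C^*$-algebras in general.

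First I would observe that the semi-direct product decomposition $G = N\rtimes H$ gives a pair of group homomorphisms: the canonical inclusion $\iota: H\hookrightarrow N\rtimes H$ and the canonical projection $\pi: N\rtimes H\twoheadrightarrow H$ (with kernel $N$), satisfying $\pi\circ\iota=\mathrm{id}_H$. Applying the full group $C^*$-algebra functor, I obtain $*$-homomorphisms
\[
\iota_*: C^*(H)\longrightarrow C^*(N\rtimes H),\qquad \pi_*:C^*(N\rtimes H)\longrightarrow C^*(H),
\]
with $\pi_*\circ\iota_* = \mathrm{id}_{C^*(H)}$. (The existence of $\pi_*$ uses the universal property of the full group $C^*$-algebra applied to the representation of $N\rtimes H$ obtained by composing $\pi$ with the universal representation of $H$.)

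Next, I would apply the K-theory functor $K_i$ for $i=0,1$. Functoriality of $K_i$ yields
\[
K_i(\pi_*)\circ K_i(\iota_*) = \mathrm{id}_{K_i(C^*(H))}.
\]
This exhibits $K_i(\iota_*)$ as a split injection and $K_i(\pi_*)$ as a split surjection. Consequently,
\[
K_i(C^*(N\rtimes H)) \simeq K_i(C^*(H))\oplus\ker K_i(\pi_*),
\]
so $K_i(C^*(H))$ is a direct summand of $K_i(C^*(N\rtimes H))$, which is the claim.

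There is no serious obstacle in this argument; the only subtle point to flag is the one noted at the outset, namely why we must work with full $C^*$-algebras. The retraction $\pi_*$ is a priori defined only on $C^*(N\rtimes H)$ via the universal property, and is guaranteed to descend to $C^*_r(N\rtimes H)$ only when the kernel $N$ is amenable (for instance via \cite{JuVa}, which is exactly the hypothesis that makes $N\rtimes H$ K-amenable in our setting). Since the statement as phrased uses full $C^*$-algebras, no such hypothesis on $N$ is needed.
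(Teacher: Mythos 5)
Your proof is correct and follows essentially the same route as the paper: both exploit the splitting of $0\rightarrow N\rightarrow N\rtimes H\rightarrow H\rightarrow 0$ together with functoriality of the full group $C^*$-algebra to produce a retraction $C^*(N\rtimes H)\rightarrow C^*(H)$, and then conclude in K-theory (the paper cites the standard fact for split exact sequences of $C^*$-algebras from \cite{WO}, while you spell out the elementary argument that a split injection of abelian groups yields a direct summand). Your closing remark on why the full $C^*$-algebra is needed is also accurate, matching the paper's parenthetical caveat.
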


\begin{proof} It is a standard fact (see e.g. Corollary 8.2.2 in \cite{WO}) that, for a split short exact sequence of $C^*$-algebras:
$$0\ra I\ra A \ra A/I \ra 0,$$
the group $K_i(A/I)$ is a direct summand in $K_i(A)$.
We apply this to the group $C^*$-algebra of a semi-direct product: because we work with the full $C^*$-algebras, the split short exact sequence of groups
$$
0\ra N\ra N\rtimes H \stackrel{\curvearrowleft}{\ra} H\ra 0
$$
induces a split short exact sequence of $C^*$-algebras:
$$0\ra I\ra C^*(N\rtimes H)\stackrel{\curvearrowleft}{\ra} C^*(H)\ra 0.$$
This concludes the proof.
	\end{proof}

 Using amenability of $\mathbf{cmm}$, $\mathbf{p4m}$ and $\mathbf{p6m}$ plus the fact that $K_0$ of their $C^*$-algebra is free abelian of finite rank (see Theorem 6.4.3 in \cite{Yang}), we immediately have:
	\begin{Cor}\label{proposition-basis for wallpapergroups}
		Let $G= \mathbb Z^2 \rtimes D_i$ for $i=2,4,6 $ be the wallpaper groups $\mathbf{cmm}$, $\mathbf{p4m}$ and $\mathbf{p6m}$, respectively. Then $K_0(\Cs(D_i))$ appears as a direct summand of $K_0(\Cs_r(G))$. Hence a basis for $K_0(\Cs(D_i))$ can be completed to a basis for the associated wallpaper group.\hfill$\square$
	\end{Cor}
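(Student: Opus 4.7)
The plan is to combine the preceding general lemma with two further well-known facts about the three wallpaper groups in question. First, since $\mathbf{cmm}$, $\mathbf{p4m}$ and $\mathbf{p6m}$ are all amenable (being virtually abelian, in fact containing $\Z^2$ as a finite-index normal subgroup), the reduced and full group $C^*$-algebras coincide:
\[
C^*_r(\Z^2\rtimes D_i)=C^*(\Z^2\rtimes D_i),\qquad i=2,4,6.
\]
This lets us switch freely between $\Cs_r$ (which appears in the statement and in Pimsner's sequence) and $\Cs$ (which is needed to invoke Lemma \ref{splitexactsequence}, whose proof really uses the splitting property of the full $C^*$-algebra functor).

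Second, I would apply Lemma \ref{splitexactsequence} with $N=\Z^2$ and $H=D_i$: the $H$-action on $N$ comes from the inclusion $D_i\hookrightarrow GL_2(\Z)$, and the lemma then gives that $K_0(C^*(D_i))$ is a direct summand of $K_0(C^*(\Z^2\rtimes D_i))=K_0(\Cs_r(G))$.

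Finally, to pass from the abstract direct-summand statement to the basis statement, I would invoke Yang's computation (\cite{Yang}, Theorem 6.4.3) that $K_0(\Cs_r(G))$ is free abelian of finite rank. Since $K_0(C^*(D_i))$ is also free abelian of finite rank (being the complex representation ring of the finite group $D_i$), and a direct summand of a finitely generated free abelian group, one can complete any basis of the summand to a basis of $K_0(\Cs_r(G))$ by choosing lifts of a basis of the complementary free quotient. There is no real obstacle here: every step is a routine application of preceding material, the only subtle point being the need to pass between reduced and full $C^*$-algebras before quoting Lemma \ref{splitexactsequence}, which is handled at once by amenability.
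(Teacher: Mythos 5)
Your proposal is correct and is essentially the paper's argument: amenability identifies $\Cs_r$ with $\Cs$, Lemma \ref{splitexactsequence} applied to $\Z^2\rtimes D_i$ gives the direct summand, and Yang's result that $K_0(\Cs_r(G))$ is free abelian of finite rank allows the basis of $K_0(\Cs(D_i))$ to be completed. Nothing is missing.
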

	In the sequel we describe bases for $K_0(C^*_r(\mathbf{cmm})$, $K_0(C^*_r(\mathbf{p4m}))$, and \\ $K_0(C^*_r(\mathbf{p6m}))$ employing the strategy explained in Corollary \ref{proposition-basis for wallpapergroups}. In Section \ref{cmm} (resp. \ref{p4m}, resp. \ref{p6m}) below we will use for the dihedral group $D_n$ (with $n=2,4, 6$) the presentation 
 $$D_n=\langle r,s|r^n=s^2=1, srs=r^{-1}\rangle$$
 with $ s = \begin{pmatrix} 
	0 & 1 \\ 1&  0
	\end{pmatrix}$
 and $r$ given as in Equation \ref{D2} (resp. \ref{D4}, resp. \ref{D6}) above. 

	\subsubsection{Explicit K-theory of \texorpdfstring{$C^*_r(\mathbf{cmm})$}{C*\_r(cmm)}} \label{cmm}

	The dihedral group $D_2$ of order 4 can be presented as
	$$D_2 = \langle r, s \mid r^2=s^2= 1, \,sr s=r\rangle.$$
	As in the previous section, we work with the concrete matrices below as generators of $D_2$
	$$r = \begin{pmatrix} 
	-1 & 0 \\ 0 & -1 
	\end{pmatrix}\quad  
	s = \begin{pmatrix} 
	0 & 1 \\ 1&  0
	\end{pmatrix}$$ 
	Let $\mathbb Z^2 = \langle{u, v}\rangle$, where $u=(1, 0)$ and $v=(0,1)$.
	The wallpaper group $\mathbf{cmm}$ has the following presentation:
	\[
	 \mathbf{cmm}=\langle r, s, u, v \mid r^2=s^2= 1, sr s=r, uv=vu, ru=u^{-1}r, rv=v^{-1}r, su=vs,sv=us\rangle
	\]
	It is known (see Theorem 6.4.3. in \cite{Yang}) that $K_0(C^*_r(\mathbf{cmm}))= \mathbb Z^6$ and $K_1=0$. The goal of this section is to make the K-theory group explicit by providing a basis. Due to Corollary \ref{proposition-basis for wallpapergroups} we need to find a basis for $K_0(C^*(D_2))$ and then complete it to a basis for the bigger group. Understanding the maximal finite subgroups of the wallpaper group will be a main ingredient.
	\begin{Rem} \label{cmm-max.fin}
	The wallpaper group $\mathbf{cmm}$ has three maximal finite subgroups up to conjugacy.	
    Each of these correspond to the stabilizers of the 0-cells (vertices) of the fundamental domain of the wallpaper group described in Section 3 in \cite{Flores}. They consist of two non-conjugate copies of $D_2$ and a copy of $C_2$, which can be concretely identified with
		\begin{itemize}
			\item $\{e, \,r, \,s,\, r s\} \simeq D_2$
			\item $\{e,\, uv^{-1}s,\, rs,\, uv^{-1}r\}\simeq D_2$
			\item $\{e, \,v^{-1}r\} \simeq C_2$
		\end{itemize}
	\end{Rem}
	Given the above description, the concrete representatives for the conjugacy classes of finite order elements in $\mathbf{cmm}$ are
	\begin{gather*} \label{cmm-conj.class}
		e, \,r,\, s,\, rs,\, v^{-1}r,\, uv^{-1}r.
	\end{gather*}
	\begin{Rem} \label{remark: choice for basis}
		Let us explain how we choose candidates for bases for the K-theory of the wallpaper groups. Recall that the K-theory rank is known for these groups. Following Corollary \ref{proposition-basis for wallpapergroups}, we start with a basis for the K-theory of the associated dihedral group. Since $C^*_r(D_i) = \mathbb C D_i$, a basis for $K_0(\mathbb C D_i)$ can be given by the minimal projections associated with the irreducible representations of $D_i$ via the formula given in Remark \ref{remark: minimal proj}. Next we need to complete this basis to a basis of its wallpaper group. Here we must make some choices. Observe that in all these cases the number of conjugacy classes of finite order elements is in accordance with the K-theory rank and in particular with the number of projections we need to choose from each maximal finite subgroup (up to conjugation) of the wallpaper group. For simplicity, if we need one projection, we take the one associated with the trivial representation, and if we need more than one, we choose from the ones associated with the non-trivial representations. The task is then to prove that such sets of projections indeed yield bases for the K-theory of the wallpaper groups.
	\end{Rem}
 \begin{Rem}\label{Green} In the next proof and other subsequent ones, we will appeal to the following fact: if $H$ is a subgroup of a discrete group $G$, then the crossed product $C_0(G/H)\rtimes G$ is isomorphic to $C^*(H)\otimes\mathcal{K}(\ell^2(G/H))$, where $\mathcal{K}(\ell^2(G/H))$ denotes the algebra of compact operators on the Hilbert space $\ell^2(G/H)$: see Theorem 4.30 in \cite{DPW}.
 \end{Rem}

\begin{Thm}\label{cmm-proj}
	Consider the six projections below chosen following Remark \ref{remark: choice for basis}. The set $B_{\mathbf{cmm}}=\{[p_1], \cdots,[p_6]\}$ is a basis for $K_0(C^*_r(\mathbf{cmm}))$.
	\begin{align*}
	p_1 &=\frac{1}{4}(1 + r + s + r s)\\
	p_2 &= \frac{1}{4}(1 + r - s - r s) \\
	p_3 &=\frac{1}{4}(1 - r + s - r s)\\
	p_4 & =\frac{1}{4}(1 - r - s + r s) \\
	p_5 &=\frac{1}{4}(1+uv^{-1}r+uv^{-1}s+ rs)\\
	p_6 &=\frac{1}{2}{(1+v^{-1}r)}
	\end{align*}
	
\end{Thm}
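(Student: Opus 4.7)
My plan rests on the amenability of $\mathbf{cmm}$ (which is virtually abelian, hence has the Haagerup property): this forces $C^*_r(\mathbf{cmm})=C^*(\mathbf{cmm})$ and makes the Baum-Connes assembly map $\mu_0^{\mathbf{cmm}} : K_0^{\mathbf{cmm}}(\underline{E}\mathbf{cmm}) \to K_0(C^*(\mathbf{cmm}))$ an isomorphism. By Table \ref{Bredon homology 2} the Bredon homology of $\mathbf{cmm}$ is concentrated in degree zero with $H_0^{\mathfrak{F}}(\underline{E}\mathbf{cmm}, R_{\mathbb{C}}) = \Z^6$, so Theorem \ref{Mislin} yields
$$
K_0(C^*_r(\mathbf{cmm})) \cong K_0^{\mathbf{cmm}}(\underline{E}\mathbf{cmm}) \cong H_0^{\mathfrak{F}}(\underline{E}\mathbf{cmm}, R_{\mathbb{C}}) = \Z^6.
$$
In particular, once I show that $\{[p_1], \ldots, [p_6]\}$ generate $K_0(C^*_r(\mathbf{cmm}))$, they will automatically form a $\Z$-basis.

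The next step is to transport the six projections to $H_0^{\mathfrak{F}}(\underline{E}\mathbf{cmm}, R_{\mathbb{C}})$ via $(\mu_0^{\mathbf{cmm}})^{-1}$. By Remark \ref{remark: minimal proj} applied to the three maximal finite subgroups from Remark \ref{cmm-max.fin}, together with naturality of the assembly map under inclusions $F \hookrightarrow \mathbf{cmm}$, each $[p_i]$ is identified with the class of an irreducible character of the stabilizer of a 0-cell in the $\mathbf{cmm}$-CW-structure of $\R^2$ described in Section 3.9 of \cite{Flores}. Concretely, $p_1, p_2, p_3, p_4$ correspond to the four irreducible characters of the stabilizer $\{e,r,s,rs\}\simeq D_2$; the projection $p_5=\frac{1}{4}(1+uv^{-1}r+uv^{-1}s+rs)$ is the average over the second $D_2$ stabilizer and thus corresponds to its trivial character; and $p_6$ corresponds to the trivial character of the $C_2$ stabilizer.

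The remaining work is to verify that these six classes form a $\Z$-basis of $H_0^{\mathfrak{F}}(\underline{E}\mathbf{cmm}, R_{\mathbb{C}})$. I would use the Bredon chain complex from \cite{Flores}, in which $C_0^{\mathfrak{F}}\cong \Z^{10}$ is spanned by the irreducible characters of the three vertex-stabilizers, and the differential $\partial_1 : C_1^{\mathfrak{F}} \to C_0^{\mathfrak{F}}$ is given by differences of induction maps along edge-stabilizer inclusions. Writing $\partial_1$ as an explicit integer matrix (entirely analogous to the computation done for $SA$ in Section \ref{BredonSA}) and comparing with the alternative basis of $H_0^{\mathfrak{F}}$ given in Table \ref{Bredon homology 2}, a direct Smith-normal-form or row-reduction computation will show that the six chosen classes project to a basis of $\coker(\partial_1)$. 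The main obstacle is precisely this change of basis: Table \ref{Bredon homology 2} amalgamates the first two irreducible characters of the first $D_2$ into the single class $[\alpha_0^1+\alpha_0^2]$, and one has to exploit the explicit relations coming from the 1-cells to show that splitting this combination into its two summands, while omitting one character at each of the other two stabilizers, still yields a basis rather than a proper finite-index sublattice.
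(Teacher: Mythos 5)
Your route is sound, but it is genuinely different from the paper's. The paper never invokes Baum--Connes for $\mathbf{cmm}$: it quotes only the ranks $K_0=\Z^6$, $K_1=0$ from \cite{Yang}, and then detects the six classes purely on the operator-algebra side, by evaluating $C^*_r(\mathbf{cmm})\cong C(\mathbb{T}^2)\rtimes D_2$ at points of the dual torus with small orbits (the fixed points $(0,0)$ and $(\tfrac12,\tfrac12)$, and the length-two orbit of $(0,\tfrac12)$, the latter giving $M_2(\mathbb{C})\otimes C^*(\Z/2)$ via Remark \ref{Green}); summing the induced maps gives $\pi\colon K_0(C^*_r(\mathbf{cmm}))\to K_0(\mathbb{C}D_2)\oplus K_0(\mathbb{C}D_2)\oplus K_0(\mathbb{C}C_2)=\Z^{10}$, the $10\times 6$ matrix of the $[p_i]$ has all invariant factors equal to $1$, and a lattice argument using torsion-freeness and the known rank upgrades linear independence to generation. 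You instead transfer the statement to the left-hand side through $\mu_0^{\mathbf{cmm}}$ (legitimate, since $\mathbf{cmm}$ is amenable), identify each $[p_i]$ with the class of an irreducible character of a vertex stabilizer via Remark \ref{remark: minimal proj} and naturality of the assembly map, and reduce the theorem to showing that those six character classes form a basis of $H_0^{\mathfrak F}(\underline{E}\mathbf{cmm},R_{\mathbb{C}})$. This is the same mechanism the paper itself uses later (proof of Corollary \ref{RHSforSA} and Theorem \ref{GL,iso}), so there is no circularity; your approach buys a cleaner conceptual match between the chosen projections and the Bredon/representation-ring picture, while the paper's buys independence from the assembly map (consistent with its goal of computing both sides separately) together with concrete detecting homomorphisms.

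The caveat is that the decisive step in your plan is only announced, not executed: since Table \ref{Bredon homology 2} records the basis of $H_0^{\mathfrak F}(\underline{E}\mathbf{cmm},R_{\mathbb{C}})$ with $\alpha_0^1$ and $\alpha_0^2$ amalgamated into $[\alpha_0^1+\alpha_0^2]$, and with $[\alpha_1^3]$, $[\alpha_2^2]$ rather than the trivial characters, you must actually write down the matrix of $\Phi_1$ for $\mathbf{cmm}$ from \cite{Flores} and verify by Smith normal form that $\{[\alpha_0^1],[\alpha_0^2],[\alpha_0^3],[\alpha_0^4],[\alpha_1^1],[\alpha_2^1]\}$ spans the full cokernel and not a proper finite-index sublattice. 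You have correctly isolated this as the crux, and the computation will succeed, but until it is carried out your argument is a reduction rather than a proof; note also that the workload is comparable to, not smaller than, the paper's explicit $10\times 6$ matrix computation.
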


\begin{proof}
	 We need to show that $p_1, \cdots,p_6$ are linearly independent and that together  they generate the $K_0$-group. The idea is to make particular use of those points in the dual group, the torus $\mathbb T^2=\mathbb R^2/\mathbb Z^2$,  whose orbit is not of maximal length so that we can extract information through their stabilizers. By forming the corresponding crossed products we obtain the following homomorphisms. Here we identify $\mathbb T^2$ with $[0,1[^2$ as usual.
	\begin{align*}
	\pi_{\tiny{\begin{pmatrix}
		0\\0
		\end{pmatrix}}} &\colon C^*_r (\mathbf{cmm}) \ra \mathbb C D_2\quad u, v\mapsto 1\\
	\pi_{\tiny{\begin{pmatrix}
		\frac{1}{2}\\\frac{1}{2}
		\end{pmatrix}}} &\colon C^*_r (\mathbf{cmm}) \ra \mathbb C D_2 \quad u,v \mapsto -1\\
	\end{align*}
	together with a homomorphism corresponding to an orbit of length 2
	\begin{align*}
	\pi_{\tiny{\begin{pmatrix}
			0\\\frac{1}{2}
			\end{pmatrix}}}\colon  C^*_r(&\mathbf{cmm})  \ra C(\{a_x, a_y\}) \rtimes D_2 \\
	& u\mapsto 1_{\{a_y\}} - 1_{\{a_x\}}\\
	& v\mapsto 1_{\{a_x\}} - 1_ {\{a_y\}},
	\end{align*}
	where 
	$a_x=
		\begin{pmatrix}
			\frac{1}{2} \\ 0
		\end{pmatrix}$ 
	and 
	$a_y=
		\begin{pmatrix}
			0\\ \frac{1}{2} 
		\end{pmatrix}$.
    The associated semi-direct product decomposition (actually a direct product decomposition in this case) is
    \[
    	D_2 = \{e,r\}\rtimes \{e, s\}.
    \]
    Note that 
    \begin{equation}\label{Morita}
        C(\{a_x, a_y\}) \rtimes D_2 \cong C(D_2/\langle r\rangle)\rtimes D_2\cong M_2(\mathbb C) \otimes \Cs(\mathbb Z_2)
    \end{equation}
    by Remark \ref{Green}, and in particular $K_0(C(\{a_x, a_y\}) \rtimes D_2)=\mathbb Z^2$.
    Consider the projections
    \[
    q_1 = 1_{\{a_x\}}\, \frac{1}{2}(1+r), \qquad q_2 = 1_{\{a_x\}} \, \frac{1}{2}(1-r),
    \]
    where $1_{a_x}$ is a minimal projection in $M_2(\mathbb C)$ and the other component is a minimal projection in $\mathbb C \mathbb Z_ 2$ constructed from the stabilizer in $D_2$ of the point $a_x$.
    The classes of the two projections $q_1,q_2$ form a basis of $K_0(C(\{a_x, a_y\}) \rtimes D_2)$: indeed the classes of the projections $\frac{1}{2}(1+r), \frac{1}{2}(1-r)$ form a basis of $K_0(\Cs(\Z_2))$ which, pulled back through the isomorphism in (\ref{Morita}), gives us a basis 
on the left hand side, consisting precisely of the classes of the projections $q_1$ and $q_2$.
    
	Consider now the homomorphism given by the sum of these three homomorphisms and the induced map in K-theory:
    \[
    	\pi \colon K_0(\Cs(\mathbf{cmm})) \longrightarrow K_0(\mathbb C D_2) \oplus K_0(\mathbb C D_2) \oplus K_0(\mathbb C C_2)= \mathbb Z^{10}
    \]
    The matrix of coordinates of the six projections $p_1,...,p_6$, written column by column in terms of the bases for the three direct summands of $\mathbb Z^{10}$ is given by
    $$
    \begin{pmatrix}
    	1&0&0&0&1&1\\
    	0&1&0&0&0&1\\
    	0&0&1&0&0&0\\
    	0&0&0&1&0&0\\
    	1&0&0&0&1&0\\
    	0&1&0&0&0&0\\
    	0&0&1&0&0&1\\
    	0&0&0&1&0&1\\
    	1&1&0&0&0&1\\
    	0&0&1&1&1&1
    \end{pmatrix}
   $$ 
   Let us denote 
   V = span$\{[p_1], \cdots, [p_6]\} \subseteq K_0(C^*_r(\mathbf{cmm}))$.
   The invariant factors of the Smith normal form of the above matrix are $(1,1,1,1,1,1)$, hence $\pi|_V$ is injective and $\pi(V)\leq \mathbb Z^{10}$ is a direct summand.
   Note that rank $V$ = rank $K_0(C^*_r(\mathbf{cmm}))=6$. 
   This implies that $\exists n\geq 1$ such that $n \cdot K_0(C^*_r(\mathbf{cmm}))\subseteq V$ and already shows that $\pi$ is injective, because $\pi|_V$ is.  
   We want to show that $n=1$, i.e. $V= K_0(C^*_r(\mathbf{cmm}))$. 
   Let $x\in K_0(C^*_r(\mathbf{cmm}))$, 
   and let $\mathbb Z^{10} = \pi(V) \oplus W$, for some $W$.  
   Write $\pi(x) = (x_v, x_w)$ so that $\pi(nx)=(nx_v,nx_w)$. Since $\pi(nx)$ is in $\pi(V)$ we have $nx_w=0$ i.e. $x_w=0$, and this shows that $\pi(K_0(C^*_r(\mathbf{cmm}))=\pi(V)$. As $\pi$ is injective, we have $V= K_0(C^*_r(\mathbf{cmm}))$.
   This completes the proof.   
\end{proof}	

\subsubsection{Explicit K-theory of \texorpdfstring{$C^*_r(\mathbf{p4m})$}{C*\_(p4m)}}	\label{p4m}

The dihedral group $D_4$ of order 8 can be presented as 
$$D_4 = \langle r, s \mid r^4=s^2= 1, \,sr s= r ^{-1}\rangle.$$ 
As with the previous section we take the matrices below as generators for $D_4$. 
$$
	r= 
	\begin{pmatrix}
		0 & 1\\
		-1 & 0
	\end{pmatrix},\quad 
	s = 
	\begin{pmatrix} 
	0 & 1 \\ 1&  0
	\end{pmatrix}
$$ 
Let $\mathbb Z^2 = \langle{u, v}\rangle$, where $u=(1, 0)$ and $v=(0,1)$. The wallpaper group $\mathbf{p4m}$ has the following presentation
\[
    \mathbf{p4m} = \langle r,s, u,v \mid r^4=s^2= 1, sr s= r ^{-1}, uv=vu, ru=v^{-1}r, rv=ur, su=vs, sv=us\rangle
\]
 
It is known (Theorem 6.4.3 in \cite{Yang}) that $K_0(\Cs(\mathbf{p4m}))=\mathbb Z^9$ and $K_1=0$. The goal of this section is to provide a basis for the $K_0$-group. We follow the same procedure as with $\mathbf{cmm}$. We start by describing the maximal finite subgroups.

\begin{Rem}\label{p4m-max.fin}
	The wallpaper group $\mathbf{p4m}$ has three maximal finite subgroups up to conjugation. These subgroups correspond to the stabilizers of the three 0-cells (vertices) of the fundamental domain of the wallpaper group as described in Section 3 in \cite{Flores}. They consist of two non-conjugate copies of $D_4$ and a single copy of $D_2$ which can be identified with

	\begin{itemize}
		\item $\{e, \,r,\, s, \,rs,\, r^2s,\, r^3s,\, r^2, \,r^3\} \cong D_4$
		\item $\{e, \,ur,\, ur^3s,\, uv^{-1}s, \,v^{-1} rs, \,r^2s, \,v^{-1}r^3,\,uv^{-1}r ^{2}\} \cong D_4$
        \item $\{e,\, vr^2,\, vrs,\, r ^3 s\} \cong D_2$
    \end{itemize}
\end{Rem}	

Given the above description, the concrete representatives for conjugacy classes of finite order elements are
\begin{gather*}\label{D4-conj-class}
	{e, \,\,r,\, s, \,rs,\, r^2, \,ur,\, ur^3s, \,uv^{-1}r^{2},\, vr ^2.} 
\end{gather*}

\begin{Thm}\label{p4m-basis}
	Consider the 9 projections below chosen according to Remarks \ref{remark: choice for basis} and \ref{remark: minimal proj}. The set $B_{\mathbf{p4m}} = \{[p_1], \cdots, [p_9]\}$ is a basis for $K_0(C^*_r(\mathbf{p4m}))$.
	\begin{gather} \label{p4m-proj}
	\begin{align*}
	p_1 & =  \frac{1}{8}(1 + r + s + rs + r ^2 s + r^3 s + r^2 + r^3)\\
	p_2 & = \frac{1}{8}(1 + r - s - rs - r ^2 s - r^3 s + r^2 + r^3)\\
	p_3 & = \frac{1}{8}(1 - r + s - rs + r ^2 s - r^3 s + r^2 - r^3)\\
	p_4 & =  \frac{1}{8}(1 - r - s + rs - r ^2 s + r^3 s + r^2 - r^3)\\
	p_5 & = \frac{1}{4}(1 - r^2 + s - r^2s)\\
	p_6 & = \frac{1}{8}(1 + ur - uv^{-1}s - v^{-1}rs - r ^2 s - ur ^3 s + uv^{-1}r ^2 + v^{-1}r^3)\\
	p_7 & = \frac{1}{8}(1 - ur + uv^{-1}s - v^{-1}rs + r^2 s - ur ^3 s + uv^{-1}r^2 - v^{-1}r^3)\\
	p_8 & = \frac{1}{8}(1 - ur - uv^{-1}s + v^{-1}rs - r ^2 s + ur ^3 s + uv^{-1}r ^2 - v^{-1}r^3)\\
	p_9 & = \frac{1}{4}(1 + vr ^2 + vr s + r ^3 s)
	\end{align*}
	\end{gather}
\end{Thm}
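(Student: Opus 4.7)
The plan is to follow the blueprint of Theorem \ref{cmm-proj}, replacing $D_2$ by $D_4$. First I would identify the $D_4$-orbits on the dual torus $\mathbb T^2=\widehat{\Z^2}$ of length strictly less than $|D_4|=8$. A direct calculation of the dual action---in which $r$ acts as $(x,y)\mapsto(y,-x)$ and $s$ as $(x,y)\mapsto(y,x)$---shows there are exactly two global fixed points, $(0,0)$ and $(1/2,1/2)$, together with one orbit of length $2$, namely $\{a_x,a_y\}:=\{(1/2,0),(0,1/2)\}$, whose common stabilizer is $\langle r^2,rs\rangle\simeq D_2$.

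Associated with these orbits I would form the three $*$-homomorphisms
\[
\pi_{(0,0)},\ \pi_{(1/2,1/2)}\colon C^*_r(\mathbf{p4m})\rightarrow \mathbb C D_4,\qquad u,v\mapsto\pm 1,
\]
\[
\pi_{\mathrm{orb}}\colon C^*_r(\mathbf{p4m})\rightarrow C(\{a_x,a_y\})\rtimes D_4\simeq M_2(\mathbb C)\otimes C^*(D_2),
\]
the last isomorphism being Remark \ref{Green} applied to $\{a_x,a_y\}=D_4/D_2$. Passing to $K_0$ yields a single homomorphism
\[
\pi\colon K_0(C^*_r(\mathbf{p4m}))\longrightarrow \Z^5\oplus\Z^5\oplus\Z^4=\Z^{14},
\]
where I fix bases on the target as in Remark \ref{remark: minimal proj} and in the proof of Theorem \ref{cmm-proj}: the minimal projections attached to the five irreducibles of $D_4$ for each of the first two summands, and the classes $[1_{\{a_x\}}\otimes p_{D_2,\rho}]$ for $\rho\in\widehat{D_2}$ for the third.

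The core of the argument is the explicit computation of the $14\times 9$ matrix $M$ whose $i$-th column is $\pi([p_i])$, followed by the verification that the invariant factors of its Smith normal form are all equal to $1$. Several entries are painless: substituting $u,v\mapsto 1$ identifies $p_1,\ldots,p_5$ with the chosen basis of the first $\Z^5$ and sends $p_6,p_7,p_8$ respectively to $p_2,p_3,p_4$, and an analogous (but permuted) picture holds on the second $\Z^5$ via $u,v\mapsto -1$. However $\pi_{(0,0)}$ and $\pi_{(1/2,1/2)}$ combined already have a one-dimensional kernel on $\mathrm{span}\{[p_1],\ldots,[p_9]\}$ (for instance, $-[p_2]-[p_4]+[p_6]+[p_8]$ maps to $0$ under both), so the orbit homomorphism is indispensable. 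Computing $\pi_{\mathrm{orb}}([p_i])$ requires rewriting each monomial $u^kv^\ell g$ appearing in $p_i$ as $((-1)^k 1_{\{a_x\}}+(-1)^\ell 1_{\{a_y\}})\,g$, then decomposing $g\in D_4$ along the coset decomposition $D_4=D_2\sqcup rD_2$ to read off its contribution as a $2\times 2$ matrix over $C^*(D_2)$.

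Once $M$ is in hand and shown to have Smith invariants $(1,\ldots,1)$, the argument concludes verbatim as in the final paragraph of the proof of Theorem \ref{cmm-proj}: $\pi$ is injective on $V:=\mathrm{span}\{[p_1],\ldots,[p_9]\}$ with image a direct summand of $\Z^{14}$, and since $K_0(C^*_r(\mathbf{p4m}))\simeq\Z^9$ is torsion-free of rank $9$, the direct-summand-plus-torsion-freeness argument forces $V=K_0(C^*_r(\mathbf{p4m}))$. The main obstacle is not conceptual but combinatorial: correctly computing the contributions of $\pi_{\mathrm{orb}}$ and reducing the resulting $14\times 9$ matrix to Smith form.
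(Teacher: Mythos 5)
Your proposal follows essentially the same route as the paper's proof: the same two evaluation homomorphisms at the fixed points $(0,0)$ and $(\tfrac{1}{2},\tfrac{1}{2})$, the same length-2 orbit $\{(\tfrac{1}{2},0),(0,\tfrac{1}{2})\}$ with stabilizer $\{e,r^2,rs,r^3s\}\simeq D_2$ handled via Remark \ref{Green}, and the same Smith-normal-form plus torsion-freeness argument carried over verbatim from Theorem \ref{cmm-proj}. The only remaining work is the explicit $14\times 9$ matrix of $\pi([p_1]),\ldots,\pi([p_9])$, which the paper records and whose invariant factors are indeed all equal to $1$.
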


\begin{proof}
	We use the same idea of the proof of Theorem \ref{cmm-proj}, namely finding enough homomorphisms through the dual action of $D_4$ on points in $\mathbb T^2$ with small orbit. In this way we will show linear independence and generation. 
	Consider
	\[
		\pi_{\tiny{\begin{pmatrix}	0\\ 0\end{pmatrix}}} \colon C^*_r(\mathbf{p4m}) \longrightarrow \mathbb C D_4 \qquad u,v\mapsto 1	
	\]
	\[
		\pi_{\tiny{\begin{pmatrix}	\frac{1}{2}\\ \frac{1}{2} \end{pmatrix}}} \colon C^*_r(\mathbf{p4m}) \longrightarrow \mathbb C D_4 \qquad u,v\mapsto -1	
	\]
	together with a homomorphism $\pi_3$ constructed from an orbit of length 2
	\begin{align*}
		\pi_ {\tiny{\begin{pmatrix} 0 \\ \frac{1}{2}\end{pmatrix}}}\colon C^*_r(&\mathbf{p4m}) \longrightarrow C(\{a_x, a_y\})\rtimes D_4\\
		& u \mapsto 1_{\{a_y\}} - 1_{\{a_x\}} \\
		& v\mapsto 1_{\{a_x\}} - 1_{\{a_y\}}
	\end{align*}
	where $a_x={\begin{pmatrix} \frac{1}{2} \\ 0\end{pmatrix}}, a_y={\begin{pmatrix} 0 \\ \frac{1}{2}\end{pmatrix}}$.
	\\
	The associated semi-direct product decomposition is
	\[
	  D_4 = \{e, rs, r^2, r^3 s\} \rtimes \{e,s\}
	\]
	hence $C(\{a_x, a_y\})\rtimes D_4 \cong M_2(\mathbb C)\otimes \mathbb CD_2$ by Remark \ref{Green} and in particular $K_0(C(\{a_x, a_y\})\rtimes D_4)=\mathbb Z^4$.
	\\
	Appealing to Remark \ref{remark: minimal proj}, from this we obtain 4 projections in $C(\{a_x, a_y\})\rtimes D_4$ 
	\begin{align*}
		q_1 &= 1_{\{a_y\}} \, \frac{1}{4}(e+ rs + r^2 + r^3s)\\
		q_2 &= 1_{\{a_y\}} \, \frac{1}{4}(e- rs + r^2 - r^3s)\\
		q_3 &= 1_{\{a_y\}} \, \frac{1}{4}(e+ rs - r^2 - r^3s)\\
		q_4 &= 1_{\{a_y\}} \, \frac{1}{4}(e-rs -r^2 + r^3s)
	\end{align*}
	where $1_{\{a_y\}}$ is a minimal projection in $M_2(\mathbb C)$ and the second component is a minimal projection in $\mathbb C D_2$ generated by elements from $D_4$ fixing $a_y$. Exactly as in the proof of Theorem \ref{cmm-proj}, the classes of the projections $q_1,q_2,q_3,q_4$ form a basis of $K_0(C(\{a_x, a_y\})\rtimes D_4)$.
    
    Consider now the homomorphism given by the sum of the homomorphisms  induced in K-theory by $\pi_{\tiny{\begin{pmatrix}	1\\1 \end{pmatrix}}}$, $\pi_{\tiny{\begin{pmatrix}	\frac{1}{2}\\ \frac{1}{2} \end{pmatrix}}}$ and $\pi_3$:
    \[
    	\pi \colon K_0(C^*_r(\mathbf{p4m})) \longrightarrow K_0(\mathbb C D_4)\oplus K_0(\mathbb C D_4)\oplus K_0(\mathbb C D_2) = \mathbb Z^{14} 
    \] 
	Let $V=: \langle{[p_1], \cdots, [p_9]}\rangle\subset K_0(C^*_r(\mathbf{p4m}))$. We construct column by column the matrix associated with $\pi$ showing the image of $[p_i]$ with $i= 1, \cdots, 9$:  
	\[
      \begin{pmatrix}
      1&0&0&0&0&0&0&0&1\\
      0&1&0&0&0&1&0&0&0\\
      0&0&1&0&0&0&1&0&0\\
      0&0&0&1&0&0&0&1&1\\
      0&0&0&0&1&0&0&0&0\\
      1&0&0&0&0&0&1&0&0\\
      0&1&0&0&0&0&0&1&0\\
      0&0&1&0&0&0&0&0&0\\
      0&0&0&1&0&1&0&0&0\\
      0&0&0&0&1&0&0&0&1\\
      1&0&0&1&0&0&0&0&1\\
      0&1&1&0&0&0&0&0&0\\
      0&0&0&0&1&0&0&1&0\\
      0&0&0&0&1&1&1&0&1
      \end{pmatrix}
    \]
    Invariant factors of this matrix are $(1,1,1,1,1,1,1,1,1)$. This together with the fact that K-theory is torsion  free can be used in an exact same manner as in the proof of Theorem \ref{cmm-proj} to show that the set $B_{\mathbf{p4m}}$ is a basis for $K_0(C^*_r(\mathbf{p4m}))$.
\end{proof}

\subsubsection{Explicit K-theory of \texorpdfstring{$C^*_r(\mathbf{p6m})$}{C*\_(p6m)}}	\label{p6m}

The dihedral group $D_6$ of order 12 can be presented as $$D_6 = \langle r, s \mid r^6=s^2= 1, \,sr s= r ^{-1}\rangle.$$ In the sequel we use the following matrices as generators of $D_6$:
$$
	r= 
	\begin{pmatrix}
		0 & 1\\
		-1 & 1
	\end{pmatrix}, \qquad  
	s = 
	\begin{pmatrix} 
		0 & 1 \\ 1&  0
	\end{pmatrix}
$$ 
Let $\mathbb Z^2 = \langle{u, v}\rangle $, where $u=(1,0)$ and $v=(0,1)$. The wallpaper group $\mathbf{p6m}$ has the following presentation:
\[
    \mathbf{p6m}= \langle r,s,u,v \mid r^6=s^2= 1, sr s= r^{-1}, uv=vu, ru=v^{-1}r, rv=uvr, su=vs,sv=us\rangle
\]
It is known that $K_0(\Cs(\mathbf{p6m}))=\mathbb Z^8$ and $K_1=0$ (see Theorem 6.4.3 in \cite{Yang}). The goal of this section is to provide a basis for the $K_0$-group. We follow the same steps as with $\mathbf{cmm}$ and $\mathbf{p4m}$. 

\begin{Rem}\label{p6m,max.fin.}
	The wallpaper group $\mathbf{p6m}$ has three maximal finite subgroups up to conjugation. These subgroups correspond to the stabilizers of 0-cells of the fundamental domain of the wallpaper group described in Section 3 in \cite{Flores}. They consist of $D_6$, $D_3$ and a copy of $D_2$ which can be respectively viewed as
	\begin{itemize}
		\item $\{e, \,r,\, s,\, r s,\, r ^2s,\, r^3 s,\, r ^4s,\, r ^5s,\, r^2,\, r^3,\, r^4, \,r^5\} \cong D_6$ (stabilizer of $(0,0)$);
		\item $\{e, \,ur ^4,\, r s,\, ur ^5s,\, uvr ^3s,\, uvr ^2\} \cong D_ 3$ (stabilizer of $(\frac{2}{3},\frac{1}{3})$);
		\item $\{e,\, uv^{-1}r ^3,\, uv^{-1}s,\, r ^3s\}\cong D_2$ (stabilizer of $(\frac{1}{2},\frac{-1}{2})$).
	\end{itemize}
\end{Rem}

With this notation, the
representatives for conjugacy classes of finite order elements in $\mathbf{p6m}$ are
\begin{gather*}\label{p6m-conj.class}
	e, \,r, \, s, \,rs, \,r^2,\, r^3,\, ur ^4,\, uv^{-1} r ^{3}.
\end{gather*}

\begin{Thm}\label{p6m,proj}
	Consider the following eight projections obtained according to Remarks \ref{remark: choice for basis} and \ref{remark: minimal proj}. The set $B_{\mathbf{p6m}}=\{[p_1], \cdots, [p_8]\}$ is a basis for $K_0(C^*_r(\mathbf{p6m}))$.
	\begin{align*}
	p_1& = \frac{1}{12}(1 + r + s + rs + r^2 s + r ^3s + r ^4 s + r ^5 s + r ^2 + r ^3 + r ^4 + r^5)\\
	p_2 & = \frac{1}{12}(1 + r - s - rs - r^2 s - r ^3s - r ^4 s - r ^5 s + r ^2 + r ^3 + r ^4 + r^5)\\
	p_3 & = \frac{1}{12}(1 - r + s - rs + r^2 s - r ^3s + r ^4 s - r ^5 s + r ^2 - r ^3 + r ^4 - r^5)\\
	p_4 & = \frac{1}{12}(1 - r - s + rs - r^2 s + r ^3s - r ^4 s + r ^5 s + r ^2 - r^3 + r ^4 - r^5)\\
	p_5 & = \frac{1}{6}(1 + \frac{1}{2}r + s + \frac{1}{2}rs - \frac{1}{2}r ^2s - r^3 s  - \frac{1}{2}r ^4 s + \frac{1}{2}r ^5 s - \frac{1}{2}r ^2 - r ^3 - \frac{1}{2}r ^4 + \frac{1}{2}r ^5) \\
	p_6 & = \frac{1}{6}(1 - \frac{1}{2}r + s - \frac{1}{2}rs - \frac{1}{2}r ^2s + r^3 s  - \frac{1}{2}r ^4 s - \frac{1}{2}r ^5 s - \frac{1}{2}r ^2 + r^3 - \frac{1}{2}r^4 - \frac{1}{2}r^5) \\
	p_7 & = \frac{1}{6}(1 + ur^4 + rs + ur^5s + uvr ^3s + uvr^2)\\
	p_8 & = \frac{1}{4}(1 + uv^{-1}r^3 + uv^{-1}s + r^3 s)
 \end{align*}
 In terms of Remark \ref{remark: minimal proj}, the projections $p_5,p_6$ are obtained from the 2-dimensional representations $\rho_\pm$ of $D_6$ defined by $\rho_\pm(s)=\begin{pmatrix} 
 1 & 0\\
 0 & -1
 \end{pmatrix}$ and $\rho_+(r)=\begin{pmatrix} 
 \frac{1}{2} & -\frac{\sqrt{3}}{2} \\
 \frac{\sqrt{3}}{2} & \frac{1}{2}
 \end{pmatrix}$, $\rho_-(r)=\begin{pmatrix} 
 -\frac{1}{2} & -\frac{\sqrt{3}}{2} \\
 \frac{\sqrt{3}}{2} & -\frac{1}{2}
 \end{pmatrix}$, by taking the coefficient on the first basis vector.
\end{Thm}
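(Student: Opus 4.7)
The plan is to mirror the proofs of Theorems \ref{cmm-proj} and \ref{p4m-basis}. Since $\mathbf{p6m}$ is amenable, $C^*_r(\mathbf{p6m})\cong C(\widehat{\mathbb Z^2})\rtimes D_6$, and we detect the class of each $p_i$ by evaluating at carefully chosen points of $\widehat{\mathbb Z^2}=\mathbb T^2$ whose orbits under the dual $D_6$-action are short, then using Remark \ref{Green} to identify the target crossed product up to Morita equivalence.

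First I would identify three such points: the origin $(0,0)$, which is fixed by all of $D_6$; a point with stabilizer isomorphic to $D_3$ and orbit of size $2$ (for instance $(1/3,1/3)$, which is fixed by $\langle r^2,s\rangle$); and a point with stabilizer isomorphic to $D_2$ and orbit of size $3$ (for instance $(1/2,1/2)$, fixed by $\langle r^3,s\rangle$). By Remark \ref{Green}, this yields three homomorphisms
$$
\pi_1 : C^*_r(\mathbf{p6m})\to \mathbb CD_6, \quad
\pi_2 : C^*_r(\mathbf{p6m})\to M_2(\mathbb C)\otimes \mathbb CD_3, \quad
\pi_3 : C^*_r(\mathbf{p6m})\to M_3(\mathbb C)\otimes \mathbb CD_2,
$$
whose $K_0$-groups have ranks $6$, $3$ and $4$ respectively. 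Following Remark \ref{remark: minimal proj}, bases for these targets are given by minimal projections associated to the irreducible representations of the stabilizer (after tensoring with a rank-one projection in each matrix factor, as in the proof of Theorem \ref{cmm-proj}). Assembling them produces a homomorphism $\pi\colon K_0(C^*_r(\mathbf{p6m}))\to \mathbb Z^{13}$.

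Next I would compute the $13\times 8$ matrix expressing $\pi([p_1]),\ldots,\pi([p_8])$ in these bases. Under $\pi_1$, the images of $p_1,\ldots,p_6$ are, by construction, the six basis projections of $K_0(\mathbb CD_6)$, while $\pi_1([p_7])$ and $\pi_1([p_8])$ are read off by setting $u=v=1$ inside $p_7,p_8$ and decomposing the resulting elements of $\mathbb CD_6$ along the four one-dimensional and two two-dimensional irreducible projections. Under $\pi_2$ and $\pi_3$ the columns are obtained by substituting the corresponding characters of $\mathbb Z^2$ (namely $u\mapsto e^{2\pi i/3},\, v\mapsto e^{2\pi i/3}$ for $\pi_2$, and $u\mapsto -1,\,v\mapsto -1$ for $\pi_3$) into each $p_i$, and then decomposing along the stabilizer's minimal projections. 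The images of $p_5,p_6$ additionally require restricting the two-dimensional representations of $D_6$ to $D_3$ and $D_2$ and expressing the restrictions in the chosen bases.

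Finally, provided the invariant factors of this $13\times 8$ matrix equal $(1,1,1,1,1,1,1,1)$, $\pi$ is injective on $V=\langle [p_1],\ldots,[p_8]\rangle$ and its image is a direct summand of $\mathbb Z^{13}$. Combined with the fact that $K_0(C^*_r(\mathbf{p6m}))\cong \mathbb Z^8$ is free abelian (Theorem 6.4.3 of \cite{Yang}), the torsion-freeness argument used at the end of the proof of Theorem \ref{cmm-proj} then forces $V=K_0(C^*_r(\mathbf{p6m}))$, proving that $B_{\mathbf{p6m}}$ is a basis. The main obstacle I anticipate is the book-keeping in $\pi_2([p_7])$ and $\pi_3([p_8])$: since $p_7$ and $p_8$ are supported on finite subgroups that mix $\mathbb Z^2$-translations with $D_6$-rotations and reflections, one must carefully evaluate the translational parts at the chosen characters and then conjugate inside the target crossed products so as to express the result in terms of the chosen stabilizer-based basis vectors.
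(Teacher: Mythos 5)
Your proposal follows essentially the same route as the paper's proof: three evaluation homomorphisms at points of $\mathbb{T}^2$ with orbits of length $1$, $2$ and $3$ (the paper takes $(2/3,1/3)$ with stabilizer $\{e,r^2,r^4,rs,r^3s,r^5s\}$ rather than your $(1/3,1/3)$ with stabilizer $\langle r^2,s\rangle$, an immaterial difference), Remark \ref{Green} to identify the targets, the assembled map into $K_0(\mathbb{C}D_6)\oplus K_0(\mathbb{C}D_3)\oplus K_0(\mathbb{C}D_2)=\mathbb{Z}^{13}$, and the invariant-factor plus torsion-freeness argument from the proof of Theorem \ref{cmm-proj}. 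The one step you leave conditional, namely that the $13\times 8$ matrix of images has invariant factors $(1,\ldots,1)$, is exactly the computation the paper carries out explicitly, and it does come out as all ones.
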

\begin{proof}
We shall show that $p_1, \cdots, p_8$ are linearly independent and generate the $K_0$-group.  We follow the same  strategy as in the two previous cases, and in particular we make use of finite quotients.
Consider 
$$
  \pi_1 \colon C^*_r(\mathbf{p6m}) \longrightarrow \mathbb  C D_6 \qquad u, v \mapsto 1
$$ 
The second homomorphism is constructed using a point in the dual torus whose orbit has length 2.
Consider
\begin{align*}
	\pi_2 &\colon C^*_r(\mathbf{p6m}) \longrightarrow C(\{a,b\})\rtimes D_6 \\
	&u,v \mapsto \frac{1}{2}(-1+\sqrt{3}\,i)1_{\{a\}}+ \frac{1}{2}(-1-\sqrt{3}\,i)1_{\{b\}}	
\end{align*}
where 
$
a=\begin{pmatrix}
\frac{2}{3}\\\frac{1}{3}
\end{pmatrix}
$
and
$
b=\begin{pmatrix}
\frac{1}{3}\\\frac{2}{3}
\end{pmatrix}.
$

The associated semi-direct product decomposition is
$$
	D_6 = \{e, r^2, r^4, rs, r^3s, r^5s\}\rtimes \{e,r^3\}\cong D_3\times \mathbb{Z}_2,
$$
so $C(\{a,b\})\rtimes D_6 =C(D_6/D_3)\rtimes D_6\cong M_{2}(\mathbb C) \otimes \Cs(D_3)$ by Remark \ref{Green}, and in particular $K_0(C(\{a,b\})\rtimes D_6)= \mathbb Z^3$. 
From this we obtain the three projections below
\begin{align*}
	q_1 &= 1_{\{a\}} \,\frac{1}{6}(1 + r^2 + r^4 + rs + r^3s + r^5s)\\
	q_2 &= 1_{\{a\}} \, \frac{1}{6}(1 + r^2 + r^4 - rs - r^3s - r^5s )\\
	q_3 &= 1_{\{a\}} \, \frac{1}{3}(1 + rs - \frac{1}{2}r^2 - \frac{1}{2}r^4 - \frac{1}{2}r^3s- \frac{1}{2}r^5s),
\end{align*}
constructed via minimal projections in each component using the stabilizers of the point $a$ in $D_6$. Exactly as in the proof of Theorem \ref{cmm-proj}, the classes of $q_1,q_2,q_3$ form a basis of $K_0(C(\{a,b\})\rtimes D_6)$.

The last homomorphism is constructed using a point in the dual torus whose orbit has length 3. 
\begin{align*}
	\pi_3 \colon C^*_r(\mathbf{p6m})& \longrightarrow C(\{a_x, b_y,b\})\rtimes D_6\\
	                       & u\mapsto 1_{\{ a_y\}} - 1_{\{a_x,b\}} \\
	                       & v\mapsto 1_{\{ a_x\}} - 1_{\{a_y,b\}},
\end{align*}
where $ a_x=\begin{pmatrix} \frac{1}{2} \\ 0 \end{pmatrix}$,
$ a_y=\begin{pmatrix} 0 \\ \frac{1}{2}\end{pmatrix}$, and
$ b=\begin{pmatrix} \frac{1}{2} \\ \frac{1}{2}\end{pmatrix}$.

The stabilizer of $b$ is $\{e,s,r^3,r^3s\}\cong D_2$ so, by Remark \ref{Green}:
\[
	C(\{a_x, a_y, b\})\rtimes D_6 = C(D_6/D_2)\rtimes D_6\cong M_3(\mathbb C)\otimes \Cs(D_2).
\]
and in particular $K_0(C(\{a_x, a_y, b\})\rtimes D_6) = \mathbb Z^4$.
Using Remark \ref{remark: minimal proj}, the above description gives projections
\begin{align*}
	q_1 &= 1_{\{b\}} \, \frac{1}{4}(1+r^3+s+sr^3)\\
	q_2 &= 1_{\{b\}} \, \frac{1}{4}(1+r^3-s-sr^3)\\
	q_3 &= 1_{\{b\}} \, \frac{1}{4}(1-r^3+s-sr^3)\\
	q_4 &= 1_{\{b\}} \, \frac{1}{4}(1-r^3-s+sr^3),
\end{align*}
where $1_{\{b\}}$ is a minimal projection in $M_3(\mathbb C)$ and the other component is a minimal projection in $\mathbb C D_2$ generated by elements in the stabilizer of the point $b$. Once more, exactly as in the proof of Theorem \ref{cmm-proj}, the classes of $q_1,q_2,q_3,q_4$ form a basis of $K_0(C(\{a_x, a_y, b\})\rtimes D_6)$.
\\
Now consider the map 
$$
\pi \colon K_0(C^*_r(\mathbf{p6m})) \longrightarrow K_0(\mathbb C D_6) 
\oplus K_0(\mathbb C D_3)\oplus 
K_0(\mathbb C D_2)=\mathbb Z^{13} 
$$
defined by homomorphisms $\pi_i$ where $i=1,2,3$.
Using the descriptions the image of the eight candidate projections under $\pi$, we obtain the matrix
\[
\begin{pmatrix}
 1& 0 &0& 0& 0& 0& 1 &1\\
 0 &1 &0 &0 &0 &0 &0 &0\\
 0 &0 &1 &0 &0 &0 &0 &0\\
 0&0&0&1&0&0&1&0\\
 0&0&0&0&1&0&0&0\\
 0&0&0&0&0&1&0&1\\
 1&0&0&1&0&0&0&1\\
 0&1&1&0&0&0&0&0\\
 0&0&0&0&1&1&1&1\\
 2&-1&0&0&0&1&1&1\\
 -1&2&0&0&0&1&0&0\\
 0&0&2&-1&1&0&0&1\\
 0&0&-1&2&1&0&1&1	
\end{pmatrix}
\]
The invariant factors of this matrix are $(1,1,1,1,1,1,1,1)$. This fact and the torsion-freeness of K-theory can be used in the same way as in the proof of Theorem \ref{cmm-proj} to show that the set $B_{\mathbf{p6m}}$ is a basis for $K_0(C^*_r(\mathbf{p6m}))$.
\end{proof}

\subsubsection{K-theory of \texorpdfstring{$C_r^*(GA)$}{C*\_r(GA)}}

Plugging in the information we have obtained so far concerning the K-theory and bases for three wallpaper groups into the Pimsner 6-term exact sequence \cite{Pimsner} we obtain the following exact sequence:
\begin{equation}\label{pimsner}
	0 \longrightarrow K_1(C^*_r(GA)) \longrightarrow 
	\mathbb {Z}^6 \stackrel{(\iota_2 ^4, \iota_2^6)}{\longrightarrow} \mathbb{Z}^9 \oplus \mathbb {Z}^8 
	\stackrel{j_4 - j_6}{\longrightarrow} K_0(C^*_r(GA)) \longrightarrow 0,
\end{equation} where the connecting map $(\iota_2 ^4, \iota_2^6)$ is induced from the inclusion of $\mathbf{cmm}$ into $\mathbf{p4m}$ and $\mathbf{p6m}$  and the connecting map $j_4 - j_6$ is induced from the inclusion of $\mathbf{p4m}$ and $\mathbf{p6m}$ into $\mathbb {Z}^2 \rtimes GL_2(\mathbb Z)$. We need to understand the kernel and the cokernel of the middle homomorphism, and this will allow us to find an explicit basis of K-theory of $C_r^*(\mathbb Z^2 \rtimes GL_2(\mathbb Z))$. For later reference let us renumber the projections that belongin to the bases $B_{\mathbf{4pm}}$ and $B_{\mathbf{p6m}}$ by $[p_1], \cdots, [p_{17}]$, where $p_i$ for $i=1, \cdots, 9$ belong to $B_{\mathbf{p4m}}$ and the rest come from $B_{\mathbf{p6m}}$. 
 	
\begin{Thm}\label{RHSforGA}
	The K-theory for the group $GA = \mathbb Z^2 \rtimes GL(2, \mathbb Z))$ is 
 $$K_0 (C_r^*(\mathbb Z^2 \rtimes GL_2(\mathbb Z))) = \mathbb Z^{11} \quad \text{and} \quad K_1(C_r^*(\mathbb Z^2 \rtimes GL_2(\mathbb Z))) = 0.$$
 A basis for $K_0$ is given by $\{[p_1], \cdots, [p_6], [p_8], [p_{9}], [p_{14}], [p_{15}],  [p_{16}]\}$.
\end{Thm}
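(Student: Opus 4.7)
The approach is to directly exploit the Pimsner exact sequence \eqref{pimsner}: once we know the middle arrow $(\iota_2^4,\iota_2^6)\colon\mathbb{Z}^6\to\mathbb{Z}^9\oplus\mathbb{Z}^8$, the K-theory groups of $C^*_r(GA)$ are determined by its kernel and cokernel. Since all three wallpaper groups are amenable, we work throughout with the full (equivalently, reduced) $C^*$-algebras. The presentations of $\mathbf{cmm}$, $\mathbf{p4m}$, $\mathbf{p6m}$ share the generator $s$ and the $\mathbb{Z}^2$-generators $u,v$, while the order-two generator $r$ of $\mathbf{cmm}$ corresponds to $r^2$ in $\mathbf{p4m}$ and to $r^3$ in $\mathbf{p6m}$; this gives the two inclusions at the level of complex group rings explicitly.

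Next, for each of the six projections of the basis $B_{\mathbf{cmm}}$ of Theorem \ref{cmm-proj}, I would rewrite it as an element of $\mathbb{C}\mathbf{p4m}$ (respectively, $\mathbb{C}\mathbf{p6m}$) via the substitutions above, and then express its $K_0$-class in the basis $B_{\mathbf{p4m}}$ (respectively, $B_{\mathbf{p6m}}$) from Theorems \ref{p4m-basis}, \ref{p6m,proj}. When an image projection already has the form of a spectral projection of a finite subgroup conjugate to a maximal finite subgroup of $\mathbf{p4m}$ or $\mathbf{p6m}$, I recognise it, after a group-element conjugation that does not alter the $K_0$-class, as a sum of some of the listed basis projections; otherwise the image is an induced character, which expands as an explicit integer combination of basis projections, computable via the character tables underlying Theorems \ref{p4m-basis}, \ref{p6m,proj}. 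This produces two integer matrices of sizes $9\times 6$ and $8\times 6$, which we stack (with the sign convention of the Mayer--Vietoris boundary in Pimsner's sequence) into a single $17\times 6$ matrix $M$.

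Finally, I would compute the Smith normal form of $M$. If its rank is $6$ and all invariant factors equal $1$, exactness of \eqref{pimsner} gives $K_1(C^*_r(GA))=0$ and $K_0(C^*_r(GA))\cong\mathbb{Z}^{17}/\operatorname{Im}(M)\cong\mathbb{Z}^{11}$, proving the first claim. To identify the asserted basis $\{[p_1],\ldots,[p_6],[p_8],[p_9],[p_{14}],[p_{15}],[p_{16}]\}$, I inspect the six column relations that $M$ imposes on $\bigl(\bigoplus_{i=1}^{9}\mathbb Z[p_i]\bigr)\oplus\bigl(\bigoplus_{i=10}^{17}\mathbb Z[p_i]\bigr)$: each of the six omitted generators $[p_7],[p_{10}],[p_{11}],[p_{12}],[p_{13}],[p_{17}]$ should be eliminable through exactly one such relation, so that the remaining eleven classes project to a free basis of the cokernel. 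The main obstacle is the computational bookkeeping in the middle step, since rewriting each $p_i\in B_{\mathbf{cmm}}$ as an integer combination of the chosen basis elements in the larger groups requires well-chosen conjugations inside $\mathbf{p4m}$ and $\mathbf{p6m}$; this is purely case-by-case work, without any conceptual obstruction beyond the character-theoretic arguments already developed for the wallpaper groups.
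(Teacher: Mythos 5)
Your proposal is correct and is essentially the paper's own argument: the authors likewise identify $r\in\mathbf{cmm}$ with $r^2\in\mathbf{p4m}$ and $r^3\in\mathbf{p6m}$, rewrite the six projections of $B_{\mathbf{cmm}}$ in the bases $B_{\mathbf{p4m}}$ and $B_{\mathbf{p6m}}$, stack the resulting $9\times 6$ and $8\times 6$ matrices into a $17\times 6$ matrix whose invariant factors are all $1$, and read off $K_1=0$, $K_0\cong\mathbb{Z}^{11}$ and the stated basis from injectivity and the cokernel relations exactly as you describe. The only difference is that the paper carries out the bookkeeping you defer (explicit images of each $p_i$ and the resulting matrix), so there is nothing further to add.
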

\begin{proof}
We describe the homomorphisms induced in K-theory by the inclusions $\iota_2 ^4$ of $\mathbf{cmm}$ into $\mathbf{p4m}$ and $\iota_{2}^{4}$ of $\mathbf{cmm}$ into $\mathbf{p6m}$. If we consider the generating set $B_{\mathbf{cmm}}$ from Theorem \ref{cmm-proj} and interpret its elements in terms of elements of $B_{\mathbf{p4m}}$ from Theorem \ref{p4m-basis}, we obtain

\begin{align*}
	\iota_{2}^{4}(p_1)&=\iota_{2}^{4}(\frac{1}{4}(1 + r_2 + s + r_2s))= \frac{1}{4}(1+ r_4^2 + s + r_4 ^2s)\\
	\iota_{2}^{4}(p_2)&= \iota_{2}^{4}(\frac{1}{4}(1 + r_2 - s - r_2s))= \frac{1}{4}(1+ r_4^2 - s - r_4 ^2s)\\
	\iota_{2}^{4}(p_3)&= \iota_{2}^{4}(\frac{1}{4}(1 - r_2 + s - r_2s))= \frac{1}{4}(1 - r_4^2 + s - r_4 ^2s)\\
	\iota_{2}^{4}(p_4)&=\iota_{2}^{4}(\frac{1}{4}(1 - r_2 - s + r_2s))= \frac{1}{4}(1 - r_4^2 - s + r_4 ^2s)\\
	\iota_{2}^{4}(p_5)&=\iota_{2}^{4}(\frac{1}{4}(1+uv^{-1}r_2+uv^{-1}s+ r_2s))= \frac{1}{4}(1+uv^{-1}r_4^2+uv^{-1}s+ r_4^2s)\\
	\iota_{2}^{4}(p_6)&=\iota_{2}^{4}(\frac{1}{2}{1+v^{-1}r_2})=\frac{1}{2}(1+v^{-1}r_4^2).
\end{align*}

Therefore in the bases of $\mathbf{cmm}$ and $\mathbf{p4m}$ the homomorphism $\iota_{2}^{4} \colon \mathbb Z^6 \ra \mathbb Z^9$ is given by 
$$\iota_2^4=
\begin{pmatrix}

1 & 0 & 0 & 0 & 1 & -1\\
0 &1 &0 &0 &-1 &1 \\
1 &0 &0 &0 &1& 0\\
0 &1 &0& 0& -1& 0 \\
0 &0 &1& 1& 0& 0\\
0 &0& 0& 0& 1& 0\\
0 &0 &0 &0 &0& 1 \\
0 &0& 0& 0& 1& -1\\
0& 0& 0& 0& 0& 2
\end{pmatrix}
$$	
Next we describe the inclusion $\iota_2 ^6$ of $\mathbf{cmm}$ into $\mathbf{p6m}$. In view of $B_{\mathbf{cmm}}$ and $B_{\mathbf{\mathbf{p6m}}}$ from \ref{cmm-proj} and \ref{p4m-proj}, respectively, we obtain
\begin{align*}
	\iota_{2}^{6}(p_1)&=\iota_{2}^{6}(\frac{1}{4}(1 + r_2 + s + r_2s))= \frac{1}{4}(1+ r_6^3 + s + r_6 ^3s)\\
	\iota_{2}^{6}(p_2)&= \iota_{2}^{6}(\frac{1}{4}(1 + r_2 - s - r_2s))= \frac{1}{4}(1+ r_6^3 - s - r_6 ^3s)\\
	\iota_{2}^{6}(p_3)&= \iota_{2}^{6}(\frac{1}{4}(1 - r_2 + s - r_2s))= \frac{1}{4}(1 - r_6^3 + s - r_6^3s)\\
	\iota_{2}^{6}(p_4)&=\iota_{2}^{6}(\frac{1}{4}(1 - r_2 - s + r_2s))= \frac{1}{4}(1 - r_6^3 - s + r_6^3s)\\
	\iota_{2}^{6}(p_5)&=\iota_{2}^{6}(\frac{1}{4}(1+uv^{-1}r_2+uv^{-1}s+ r_2s))= \frac{1}{4}(1+uv^{-1}r_6^3+uv^{-1}s+ r_6^3s)\\
	\iota_{2}^{6}(p_6)&=\iota_{2}^{6}(\frac{1}{2}{1+v^{-1}r_2})=\frac{1}{2}(1+v^{-1}r_6^3).
\end{align*}

Hence, in the bases of $\mathbf{\mathbf{cmm}}$ and $\mathbf{\mathbf{p6m}}$ the homomorphism $\iota_2^6 \colon \mathbb Z^6 \ra \mathbb Z^8$ is given by 
$$\iota_2^6=
	\begin{pmatrix}
		    
		 1 & 0 & 0 & 0 & 0 & -1\\
		 0 &1 &0 &0 &0 &1 \\
		 0 &0 &1 &0 &0& 0\\
		 0 &0 &0&1& 0& 0\\
		 0 &0& 1& 1&0& 0\\
		 1 &1 &0 &0 &0& 0 \\
		 0 &0& 0& 0& 0& 0\\
	     0& 0& 0& 0& 1& 2
	\end{pmatrix}
$$	
Stacking these two matrices on top of each other it is obtained the following $17 \times 6$ matrix, that describes the homomorphism $(\iota_2^4, \iota_2^6)$
$$
	\begin{pmatrix}
		 1 & 0 & 0 & 0 & 1 & -1\\
		 0 &1 &0 &0 &-1 &1 \\
		 1 &0 &0 &0 &1& 0\\
		 0 &1 &0& 0& -1& 0 \\
		 0 &0 &1& 1& 0& 0\\
		 0 &0& 0& 0& 1& 0\\
		 0 &0 &0 &0 &0& 1 \\
		 0 &0& 0& 0& 1& -1\\
		 0& 0& 0& 0& 0& 2\\
		 -1 & 0 & 0 & 0 & 0 & 1\\
		 0 &-1 &0 &0 &0 &-1 \\
		 0 &0 &-1 &0 &0& 0\\
		 0 &0 &0& -1& 0& 0\\
		 0 &0& -1& -1&0& 0\\
		 -1 &-1 &0 &0 &0& 0 \\
		 0 &0& 0& 0& 0& 0\\
		 0& 0& 0& 0& -1& -2
	\end{pmatrix}
$$
The invariant factors of this matrix are $(1,1,1,1,1,1)$, so the middle connecting map in the Pimsner exact sequence is injective and in particular $K_1=\Ker (\iota_2^4, \iota_2^6)=0$.
The cokernel of this matrix describes the $K_0$-group. Note that $rank(\iota_2^4)= 5$ (as $\Ker(\iota_2^4)$ is generated by $(0,0,1,-1,0,0)$) and $rank(\iota_2^6)= 5$ (as $\Ker(\iota_2^6)$ is generated by $(1,-1,0,0,-2,1)$). 

So, denoting the elements of the union of bases of $K_0(C^*_r(\mathbf{p4m}))$ and of $K_0(C^*_r(\mathbf{p6m}))$ by $\{[p_1], \cdots, [p_{17}]\}$, to get a basis of the cokernel of $(\iota_2^4, \iota_2^6)$ we may take all elements of the basis of $K_0(C^*_r(\mathbf{p4m}))$ but one, and all the elements of the basis of $K_0(C^*_r(\mathbf{p6m}))$ but five. So a basis of $K_0(\Cs_r(GA))$ is given for example by:
\[
	\bigl\{
	  [p_1], \cdots, [p_6],[p_8],[p_9],[p_{14}],[p_{15}],[p_{16}]
	\bigr\}
\]
This finishes the proof.
\end{proof}	
	
	\begin{Thm} \label{GL,iso}
		The assembly map of $GA = \mathbb Z^2 \rtimes GL_2(\mathbb Z)$ is an isomorphism.
	\end{Thm}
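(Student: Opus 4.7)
Since $K_1^{GA}(\underline{E}GA) = 0$ by Corollary \ref{LHSforGA} and $K_1(C^*_r(GA)) = 0$ by Theorem \ref{RHSforGA}, the map $\mu_1^{GA}$ is automatically an isomorphism. So the task reduces to showing that $\mu_0^{GA}\colon \mathbb{Z}^{11} \to \mathbb{Z}^{11}$ is an isomorphism.

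The most economical route is to invoke Proposition \ref{treeamenstab}, which already establishes the Baum-Connes conjecture for any subgroup of $\Z^2 \rtimes GL_2(\Z)$ by appealing to Proposition \ref{FromOyono} on the Bass-Serre tree with amenable wallpaper stabilizers; this immediately gives the claim. A slightly more hands-on variant runs a five-lemma on the commutative ladder whose rows are the Mayer-Vietoris long exact sequence in equivariant $K$-homology (coming from the amalgamated decomposition $GA = \mathbf{p4m} \ast_{\mathbf{cmm}} \mathbf{p6m}$ and lifted from Bredon homology via Theorem \ref{Mislin}) and the Pimsner exact sequence \eqref{simp-pimsner}: since each wallpaper group $\mathbf{cmm}$, $\mathbf{p4m}$, $\mathbf{p6m}$ is virtually abelian (hence amenable, hence satisfies BC), its assembly map is an isomorphism in both degrees, and the five lemma propagates this to $GA$.

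A third option, closer in spirit to Corollary \ref{RHSforSA}, is to match generators explicitly. The basis of $H_0^{\mathfrak F}(\underline{E}GA, R_{\mathbb{C}})$ identified in Section \ref{BredonGA} is given by characters of finite stabilizers $F\subset GA$, and by Example 2.11 in Part 2 of \cite{MV03} the assembly map sends each such character to the associated spectral projection in $\mathbb{C} F\subset C^*_r(GA)$. Combining this with naturality of $\mu_0$ along the inclusions $\mathbf{p4m}\hookrightarrow GA$ and $\mathbf{p6m}\hookrightarrow GA$, and with the assembly-map isomorphisms for the wallpaper subgroups, one can express the image of the LHS basis as an explicit integer combination of the RHS basis projections $[p_1],\ldots,[p_6],[p_8],[p_9],[p_{14}],[p_{15}],[p_{16}]$ of Theorem \ref{RHSforGA}. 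The main obstacle in this last variant is the combinatorial bookkeeping required to compute each column of the resulting $11\times 11$ transition matrix and to verify that it is unimodular; the five-lemma approach bypasses this entirely, at the cost of yielding no explicit generator correspondence.
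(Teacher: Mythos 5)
Your second variant is precisely the paper's proof: the authors place the right-exact sequence \eqref{LHS} in Bredon homology (which coincides with $K_0^{(-)}(\underline{E}(-))$ here, since all higher Bredon groups of $\mathbf{cmm}$, $\mathbf{p4m}$, $\mathbf{p6m}$ and $GA$ vanish) above the Pimsner sequence \eqref{simp-pimsner}, use functoriality of the assembly map to get a commutative ladder, invoke the isomorphism of $\mu_0$ for the three wallpaper groups, and conclude by a suitable five-lemma; together with the vanishing of both $K_1$'s this settles the theorem. Your first, ``most economical'' route --- simply quoting Proposition \ref{treeamenstab} --- is logically unimpeachable, but it misses the point of the section: Proposition \ref{treeamenstab} rests on Oyono-Oyono's general theorem on actions on trees, whereas the stated aim (announced at the start of the right-hand-side section for $GA$) is to \emph{reprove} the isomorphism from the explicit K-homology and K-theory computations, which is exactly what the five-lemma ladder delivers; so the paper's choice buys an independent, computation-based verification, while your shortcut buys brevity at the cost of adding nothing beyond what was already known in Section 2. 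Your third variant (explicit matching of the eleven generators via spectral projections, as in Corollary \ref{RHSforSA}) would indeed give the strongest statement, but as you note it requires verifying unimodularity of an $11\times 11$ transition matrix, and the paper does not carry this out for $GA$ --- it only does the analogous matching for $SA$ and, partially, for $\Z^2\rtimes\Gamma(2)$.
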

	\begin{proof}
		The two exact sequences obtained in (\ref{simp-pimsner}) and (\ref{LHS}) yield
 
		\begin{equation*}
  \footnotesize{
		\xymatrix{
	H_0^{\mathfrak F}(\underline{E}\mathbf{cmm}, R_{\mathbb{C}}) 
\ar[r]\ar[d]^{\mu_0^{cmm}}&
  H_0^{\mathfrak F}(\underline{E}\mathbf{p4m}, R_{\mathbb{C}})
 \oplus 
 H_0^{\mathfrak F}(\underline{E}\mathbf{p6m}, R_{\mathbb{C}})
\ar[r]\ar[d]^{\mu_0^{p4m}\oplus \mu_0^{p6m}}&
 H_0^{\mathfrak F}(\underline{E} GA, R_{\mathbb{C}})
\ar[r]\ar[d]^{\mu_0^{GA}}&
0
		\\
K_0(C^*_r(\mathbf{cmm})) \ar[r]& 
K_0(C^*_r(\mathbf{p4m})) \oplus K_0(C^*_r(\mathbf{p6m})) \ar[r]&
	K_0(C^*_r(GA)) \ar[r]& 0,
 }}
		\end{equation*}
		which is commutative thanks to functoriality of the assembly maps. Note that the Baum-Connes assembly map for wallpaper groups is an isomorphism. Hence a suitable version of the 5-lemma implies that $\mu_0^{GA}$ is an isomorphism. 
\end{proof}

\section{K-homology and K-theory for \texorpdfstring{$\Z^2\rtimes\Gamma(2)$}{Z\textasciicircum 2 x Gamma(2)}}

It is known that $\Gamma(2)$ is generated by $\varepsilon=\left(\begin{array}{cc}-1 & 0 \\0 & -1\end{array}\right), T^2=\left(\begin{array}{cc}1 & 2 \\0 & 1\end{array}\right), U^2=\left(\begin{array}{cc}1 & 0 \\2 & 1\end{array}\right)$ (see e.g. Theorem 3.1 in \cite{Conrad}). Write $S$ for the {\it Sanov subgroup}, i.e. the subgroup generated by $T^2$ and $U^2$. An easy ping-pong argument (see e.g. \cite{Wiki}) shows that $S$ is free on $T^2$ and $U^2$. Observe that $S$ is a subgroup of index 2 in $\Gamma(2)$; and $\Gamma(2)=S\times\langle\varepsilon\rangle$. 
W denote $G=\Z^2\rtimes\Gamma(2)$, so we have
$$G\simeq (\Z^2\rtimes \langle\varepsilon\rangle)\rtimes S=\mathbf{p2}\rtimes S.$$

\subsection{The left-hand side: equivariant K-homology}

In order to compute equivariant K-homology for this group we will make use of Mart\'{i}nez' spectral sequence introduced in \cite{Martinez}. This is  an analogue in Bredon homology of the Serre spectral sequence.

Consider the extension $0\rightarrow \mathbf{p2}\rightarrow G\rightarrow S \rightarrow 0$ defining $G$ as a semi-direct product. Denote by $\mathfrak{F}_{\mathbf{p2}}$ the family of finite subgroups of $\mathbf{p2}$. Since $S$ is the free group on 2 generators, the family of finite subgroups of $S$ only contains the trivial subgroup. Hence the same argument as in Section 5.2 in \cite{FPV17} applies, implying that the $E^2$-page of the spectral sequence simplifies to 
$$
E^2_{p,q}=H_p(S,H_q^{\mathfrak{F}_{\mathbf{p2}}}(\underline{E}\mathbf{p2},R_{\mathbb{C}}).
$$
Recall that the ordinary homology of $S$ is concentrated in degrees $0$ and $1$, with values $H_0(S,\Z)=\Z$ and $H_1(S,\Z)=\Z^2$. Moreover, appealing to  Table \ref{Bredon homology}, the only nontrivial Bredon homology groups of $\mathbf{p2}$ are $H_0^{\mathfrak{F}_{\mathbf{p2}}}(\underline{E}\mathbf{p2},R_{\mathbb{C}})=\Z^5$ and $H_2^{\mathfrak{F}_{\mathbf{p2}}}(\underline{E}\mathbf{p2},R_{\mathbb{C}})=\Z$. 
This in particular implies that only four abelian groups in the $E^2$-page can be non-trivial. Let us compute them.

\begin{itemize}

\item $E^2_{0,0}=H_0(S,H_0^{\mathfrak{F}_{\mathbf{p2}}}(\underline{E}\mathbf{p2},R_{\mathbb{C}}))=H_0(S,\Z^5)=\Z^5$

\item $E^2_{1,0}=H_1(S,H_0^{\mathfrak{F}_{\mathbf{p2}}}(\underline{E}\mathbf{p2},R_{\mathbb{C}}))=H_1(S,\Z^5)=\Z^2\otimes \Z^5=\Z^{10}$

\item $E^2_{0,2}=H_0(S,H_2^{\mathfrak{F}_{\mathbf{p2}}}(\underline{E}\mathbf{p2},R_{\mathbb{C}}))=H_0(S,\Z)=\Z$

\item $E^2_{1,2}=H_1(S,H_2^{\mathfrak{F}_{\mathbf{p2}}}(\underline{E}\mathbf{p2},R_{\mathbb{C}}))=H_1(S,\Z)=\Z^2$

\end{itemize}

Note that to describe $E^2_{1,0}$, we appeal to the Universal Coefficient Theorem (see Theorem 3A.3 in \cite{Hatcher}), together with the fact that the groups involved are torsion-free.

Because of the existence of a 3-dimensional model for $\underline{E}G$, the differential $d_2$ in this spectral sequence vanishes, so the sequence collapses at the second page $E^2$, and $E^2=E^{\infty}$. 

Now we have all the ingredients to compute the equivariant $K$-homology of $\underline{E}G$; as in the two previous cases, we appeal to the reduced version of Atiyah-Hirzebruch spectral sequence of Theorem \ref{Mislin}.

\begin{equation} \label{Gamma(2)}
\xymatrix@1{ 
0 \ar[r] & \Z^{10} \ar[r] & K_1^G(\underline{E}G) \ar[r] & \Z^2 \ar[d] 
\\
0  & \Z \ar[l] & K_0^G(\underline{E}G) \ar[l] &
\Z^5 \ar[l]_ -{\Lambda_0(G)}
}
\end{equation}

It is established in \cite[page 52]{MV03} that for any group $\Gamma$ the kernel of the homomorphism $\Lambda_0(\Gamma):H_0^{\mathfrak F}(\Gamma,R_{\mathbb{C}})\rightarrow K_0^{\Gamma}(\underline{E}\Gamma)$ is a torsion group. In our case, as the domain is torsion-free, $\Lambda_0(G)$ is injective. This implies that the long sequence  \eqref{Gamma(2)} splits into two short exact sequences:

$$0\rightarrow\Z^5 \rightarrow K_0^G(\underline{E}G)\rightarrow \Z\rightarrow 0$$
and
$$0\rightarrow\Z^{10}\rightarrow K_1^G(\underline{E}G)\rightarrow \Z^2\rightarrow 0$$

\begin{Prop}\label{LHSforZ2rtimesGamma(2)}
Let $G=\Z^2\rtimes \Gamma(2)$. The equivariant K-homology groups of $G=\Z^2\rtimes\Gamma(2)$ are
$$
K_0^G(\underline{E}G)=\Z^6 \quad \text{and} \quad K_1^G(\underline{E}G)=\Z^{12}.
$$
Further, the map $\underline{E}\mathbf{p2}\rightarrow \underline{E}G$ induced by inclusion of groups, induces an isomorphism in $K_0^G$.
\end{Prop}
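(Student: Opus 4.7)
For the rank claim, each of the two short exact sequences displayed just above the proposition splits, since the quotient is free abelian ($\Z$ and $\Z^2$ respectively). This yields $K_0^G(\underline{E}G)\cong\Z^5\oplus\Z=\Z^6$ and $K_1^G(\underline{E}G)\cong\Z^{10}\oplus\Z^2=\Z^{12}$.

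For the isomorphism statement, I would apply the naturality of Mislin's exact sequence (Theorem \ref{Mislin}) to the inclusion $\mathbf{p2}\hookrightarrow G$. Since $\mathbf{p2}$ admits a 2-dimensional model for $\underline{E}\mathbf{p2}$ (namely $\R^2$), its Mislin sequence reduces to $0\to H_0^{\mathfrak F}(\underline{E}\mathbf{p2})\to K_0^{\mathbf{p2}}(\underline{E}\mathbf{p2})\to H_2^{\mathfrak F}(\underline{E}\mathbf{p2})\to 0$, which splits and gives $K_0^{\mathbf{p2}}(\underline{E}\mathbf{p2})\cong\Z^6$. Paired with the analogous split short exact sequence for $G$ in a commuting ladder, the five lemma reduces the task to showing that the inclusion-induced maps $H_q^{\mathfrak F}(\underline{E}\mathbf{p2})\to H_q^{\mathfrak F}(\underline{E}G)$ are isomorphisms for $q=0,2$.

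To establish these two isomorphisms, I would read off the inclusion-induced maps from the Mart\'inez spectral sequence. Because $S$ is free of cohomological dimension one, $E^2_{p,q}$ vanishes for $p\geq 2$, and the induced map factors as $H_q^{\mathfrak F}(\underline{E}\mathbf{p2})\twoheadrightarrow H_0(S,H_q^{\mathfrak F}(\underline{E}\mathbf{p2}))=E^\infty_{0,q}\hookrightarrow H_q^{\mathfrak F}(\underline{E}G)$. For $q=0$ and $q=2$, the rank computation shows that the coinvariants have the same rank as $H_q^{\mathfrak F}(\underline{E}\mathbf{p2})$, which is torsion-free; hence the $S$-action on these Bredon groups is forced to be trivial and the first arrow is the identity. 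Moreover, in total degrees $0$ and $2$, no other $E^\infty$-subquotient $E^\infty_{p,q-p}$ with $p\geq 1$ is nonzero (using $H_1^{\mathfrak F}(\underline{E}\mathbf{p2})=0$), so the edge inclusion is an isomorphism as well. The principal delicate point, which I would verify carefully, is the naturality of the Mart\'inez edge homomorphism, i.e.~that it really coincides with the map induced by group inclusion; this follows from the functoriality of the bar-resolution construction in \cite{Martinez}.
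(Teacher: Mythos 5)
Your proposal matches the paper's own proof: the ranks follow from splitting the two displayed short exact sequences (free quotients), and the second claim is proved exactly as you do, via naturality of the Mislin/Atiyah--Hirzebruch sequence, the resulting commutative ladder of short exact sequences for $\mathbf{p2}\hookrightarrow G$, and the five lemma, with the outer vertical maps identified as isomorphisms through the Mart\'{i}nez spectral sequence. Your explicit factorization of the inclusion-induced map through the coinvariants and the $E^\infty_{0,q}$ edge inclusion (together with the remark on naturality of the edge homomorphism) merely spells out the step the paper compresses into the sentence that ``the above analysis of Mart\'{i}nez spectral sequence implies that the vertical homomorphisms on the sides are isomorphisms,'' so the approach is essentially identical.
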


{\bf Proof:} The first statement follows immediately from the two short exact sequences obtained from \eqref{Gamma(2)}. For the second statement, the functoriality of the Atiyah-Hirzebruch spectral sequence, the splitting of \eqref{Gamma(2)} and the fact that there is a 2-dimensional model for $\underline{E}\mathbf{p2}$ (so $H_3^{\mathfrak{F}_{\mathbf{p2}}}(\underline{E}\mathbf{p2},R_{\mathbb{C}})=0$), imply that the inclusion $\mathbf{p2}\hookrightarrow G$ induces a commutative diagram:

$$
\begin{tikzcd}[row sep=scriptsize, column sep=scriptsize]
0 \arrow[r] & H_0^{\mathfrak{F}_G}(\underline{E}G,R_{\mathbb{C}}) \arrow[r] & K_0^G(\underline{E}G) \arrow[r] & H_2^{\mathfrak{F}_G}(\underline{E}G,R_{\mathbb{C}}) \arrow[r] & 0 \\
0 \arrow[r] & H_0^{\mathfrak{F}_{\mathbf{p2}}}(\underline{E}\mathbf{p2},R_{\mathbb{C}}) \arrow[r] \arrow[u] &  K_0^{\mathbf{p2}}(\underline{E}\mathbf{p2}) \arrow[r] \arrow[u] & H_2^{\mathfrak{F}_{\mathbf{p2}}}(\underline{E}\mathbf{p2},R_{\mathbb{C}}) \arrow[r] \arrow[u] & 0  
\end{tikzcd}$$ 

The above analysis of Mart\'{i}nez spectral sequence implies that the vertical homomorphisms on the sides of the diagram are isomorphisms, so by the 5-lemma the same holds for the middle one.\hfill$\square$

\subsection{The right-hand side: K-theory}

Since $C^*_r(G)\simeq C^*(\mathbf{p2})\rtimes S$, we may compute the K-theory of $C^*_r(G)$ using the Pimsner-Voiculescu 6-terms exact sequence for crossed products by a free group (Theorem 3.5 in \cite{PV}). Since $K_1(C^*(\mathbf{p2}))=0$ by Theorem 3.5 in \cite{ELPW}, this sequence unfolds as:
\begin{equation}\label{PVforZ2rtimesGamma(2)}
    0\rightarrow K_1(C^*_r(G))\rightarrow K_0(C^*(\mathbf{p2}))\oplus K_0(C^*(\mathbf{p2}))\stackrel{\beta}{\rightarrow} K_0(C^*(\mathbf{p2}))\rightarrow K_0(C^*_r(G))\rightarrow 0
\end{equation}
where $\beta(x_1,x_2)=x_1-(\alpha_1)_*(x_1)+x_2-(\alpha_2)_*(x_2)$ for $x_1,x_2\in K_0(C^*(\mathbf{p2}))$ and 
$\alpha_1=\begin{pmatrix} 1 & 2 \\0 & 1 \end{pmatrix},
\alpha_2=\begin{pmatrix}
    1 & 0 \\2 & 1
\end{pmatrix}$.

Recall from Subsection \ref{RHS-AS} that $K_0(C^*(\mathbf{p2}))\simeq\Z^6$ has a basis
$$B_2=\Bigl\{[1],\, [\frac{1+\varepsilon}{2}],\,[\frac{1+u\varepsilon}{2}],\,[\frac{1+v\varepsilon}{2}],\,[\frac{1+uv\varepsilon}{2}],\,F_2\Bigr\}.$$
We are led to study the action of $\Gamma(2)$ on $K_0(C^*(\mathbf{p2}))$. That is the content of the next lemma.

\begin{Lem}\label{trivialaction} 
The group $\Gamma(2)$ acts trivially on $K_0(C^*(\mathbf{p2}))$.
\end{Lem}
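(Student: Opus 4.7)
The action of $\Gamma(2)$ on $K_0(C^*(\mathbf{p2}))$ comes from conjugation inside $G=\mathbf{p2}\rtimes S$; explicitly, since $\varepsilon$ is central in $\Gamma(2)$, an element $g\in\Gamma(2)$ sends $(x,\varepsilon^k)\in\mathbf{p2}$ to $(gx,\varepsilon^k)$. The plan is to split the problem along the basis $B_2$, preceded by one obvious reduction: since $\varepsilon\in\mathbf{p2}$, the automorphism $\varphi_\varepsilon$ is inner and induces the identity, and because $\Gamma(2)=S\times\langle\varepsilon\rangle$ it suffices to treat elements of the free Sanov subgroup $S$, and in particular the generators $\alpha_1=T^2$ and $\alpha_2=U^2$.

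For the five ``involution'' basis elements $[1]$ and $[\frac{1+w\varepsilon}{2}]$ with $w\in\{0,u,v,uv\}$, I would argue directly by conjugacy classes. A one-line calculation in $\mathbf{p2}$ gives $(x,1)(w,\varepsilon)(x,1)^{-1}=(w+2x,\varepsilon)$ and $(0,\varepsilon)(w,\varepsilon)(0,\varepsilon)^{-1}=(-w,\varepsilon)$, so the $\mathbf{p2}$-conjugacy class of $(w,\varepsilon)$ is determined by $w\bmod 2\mathbb{Z}^2$. The defining congruence $\alpha\equiv I\pmod{2}$ of $\Gamma(2)$ then gives $\alpha w\equiv w\pmod{2\mathbb{Z}^2}$, so $(\alpha w,\varepsilon)$ is $\mathbf{p2}$-conjugate to $(w,\varepsilon)$, and the associated spectral projections have the same $K_0$-class.

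The main obstacle is the Bott-type element $F_2$. By naturality of restriction (\ref{Rescommutes}) together with the triviality of the $SL_2(\mathbb{Z})$-action on $K_0(C^*(\mathbb{Z}^2))$ (Proposition~3.2.5 of \cite{Isely}), the element $(\varphi_\alpha)_\star F_2-F_2$ has trivial restriction to $\mathbb{Z}^2$; combined with the previous step, the map $\alpha\mapsto(\varphi_\alpha)_\star F_2-F_2$ is a group homomorphism from $\Gamma(2)$ into the rank-5 subgroup spanned by the other basis elements, and this is what must be shown to vanish. I would argue it by passing to the left-hand side of Baum--Connes: since $\mathbf{p2}$ is amenable, the assembly map is a $\Gamma(2)$-equivariant isomorphism, and Theorem~\ref{Mislin} (applied to the 2-dimensional model $\underline{E}\mathbf{p2}=\mathbb{R}^2$) exhibits $K_0^{\mathbf{p2}}(\underline{E}\mathbf{p2})$ as an extension with subgroup $H_0^{\mathfrak{F}}$ and quotient $H_2^{\mathfrak{F}}\simeq\mathbb{Z}$. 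The generator of the $H_2^{\mathfrak{F}}$-quotient is the fundamental class of the pillow $\mathbf{p2}\backslash\mathbb{R}^2\cong S^2$, and is $\Gamma(2)$-invariant because $\Gamma(2)\subset SL_2(\mathbb{Z})$ acts on $\mathbb{R}^2$ by orientation-preserving maps and descends to self-homeomorphisms of $S^2$ fixing each of the four cone points individually. A $\Gamma(2)$-invariant preimage of this generator in $K_0^{\mathbf{p2}}(\underline{E}\mathbf{p2})$, transferred via the assembly map, produces a $\Gamma(2)$-invariant element of $K_0(C^*(\mathbf{p2}))$ that differs from $F_2$ only by an element of the already-invariant $H_0^{\mathfrak{F}}$-part, whence the required triviality.

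As a fall-back, a rational equivariant Chern character argument gives the same conclusion: $K_0(C^*(\mathbf{p2}))\otimes\mathbb{Q}\cong\mathbb{Q}^6$ decomposes into five contributions indexed by conjugacy classes of torsion elements plus one top orbifold cohomology class of $S^2$, on each of which $\Gamma(2)$ acts trivially by the arguments above, and torsion-freeness of $K_0(C^*(\mathbf{p2}))\cong\mathbb{Z}^6$ upgrades this to an integral statement.
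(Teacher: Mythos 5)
Your reduction to the generators $\alpha_1,\alpha_2$ of the Sanov subgroup and your treatment of the five projection classes are correct; in fact the observation that the $\mathbf{p2}$-conjugacy class of $(w,\varepsilon)$ only depends on $w$ mod $2\Z^2$, combined with $\alpha\equiv I\pmod 2$, is a cleaner and more uniform version of the paper's generator-by-generator check. Your identification of the remaining obstruction as the homomorphism $\alpha\mapsto(\varphi_\alpha)_\star F_2-F_2$ with values in the rank-$5$ subgroup spanned by $B_2\setminus\{F_2\}$ is also correct.

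The problem is your main argument for showing that this homomorphism vanishes: you assert the existence of a $\Gamma(2)$-invariant preimage in $K_0^{\mathbf{p2}}(\underline{E}\mathbf{p2})$ of the generator of $H_2^{\mathfrak{F}}(\underline{E}\mathbf{p2},R_{\mathbb{C}})\simeq\Z$, but this is exactly what has to be proved. For an extension $0\to\Z^5\to\Z^6\to\Z\to 0$ on which $\Gamma(2)$ acts trivially on the subgroup and on the quotient, an invariant lift of a generator of the quotient exists if and only if the unipotent cocycle $\alpha\mapsto(\varphi_\alpha)_\star\tilde x-\tilde x$ vanishes, i.e.\ if and only if the action on the middle term is trivial; trivial action on the two ends does not force this (consider $(x,y)\mapsto(x+y,y)$ on $\Z^2$, trivial on $\Z\times 0$ and on the quotient). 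So that step is circular. The missing ingredient is precisely what the paper imports from Davis--L\"uck \cite{DaLu13}: their short exact sequence for $\mathbf{p2}=\Z^2\rtimes C_2$ splits \emph{naturally} after tensoring with $\Z[1/2]$, and naturality of the splitting (not merely of the sequence), together with torsion-freeness of $K_0^{\mathbf{p2}}(\underline{E}\mathbf{p2})$, yields the integral invariance of $F_2$. Your fallback via the rational equivariant Chern character is the right kind of repair and is essentially the paper's argument with the naturally split rational decomposition playing the role of the Davis--L\"uck sequence; but as written it is only a sketch. To make it a proof you must state and use the naturality of that decomposition with respect to the automorphisms $\varphi_{\alpha_i}$, after which the trivial permutation of the torsion conjugacy classes (your mod-$2$ argument) and orientation-preservation on $\mathbf{p2}\backslash\R^2\cong S^2$ give triviality on $K_0(C^*(\mathbf{p2}))\otimes\mathbb{Q}$, and torsion-freeness of $K_0(C^*(\mathbf{p2}))\simeq\Z^6$ upgrades this to the integral statement.
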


\begin{proof}
Recall that $\Gamma(2)$ is generated by $\varepsilon, \alpha_1$ and $\alpha_2$. Clearly $\varepsilon$ acts trivially. We prove that $\alpha_1$ also acts trivially, the proof for $\alpha_2$ is similar.

Recall from Subsection \ref{RHS-AS} that $K_0(C^*(\mathbf{p2}))\simeq\Z^6$ has a basis
$$B_2=\Bigl\{[1],\, [\frac{1+\varepsilon}{2}],\,[\frac{1+u\varepsilon}{2}],\,[\frac{1+v\varepsilon}{2}],\,[\frac{1+uv\varepsilon}{2}],\,F_2\Bigr\}.$$ 
Recall that $\alpha_1$ acts on $\mathbf{p2}$ by $\alpha_1(u^kv^m)=u^{k+2m}v^{m}$ and $\alpha_1(\varepsilon)=\varepsilon$. We now tackle the first five basis elements of $B_2$ one by one.
\begin{itemize}
\item It is clear that $(\alpha_1)_*[1]=[1]$.
\item Similarly $(\alpha_1)_*[\frac{1+\varepsilon}{2}]=[\frac{1+\varepsilon}{2}]$.
\item Similarly $(\alpha_1)_*[\frac{1+u\varepsilon}{2}]=[\frac{1+u\varepsilon}{2}]$.
\item We have $(\alpha_1)_*[\frac{1+v\varepsilon}{2}]=[\frac{1+u^2v\varepsilon}{2}]$ but $u^{-1}(\frac{1+u^2v\varepsilon}{2})u=\frac{1+v\varepsilon}{2}$.
\item We have $(\alpha_1)_*[\frac{1+uv\varepsilon}{2}]=[\frac{1+u^3v\varepsilon}{2}]$ but $u^{-1}(\frac{1+u^3v\varepsilon}{2})u=\frac{1+uv\varepsilon}{2}$.
\end{itemize}

To prove that $(\alpha_1)_*(F_2)=F_2$, we appeal to the explicit description of the Baum-Connes assembly map for certain wallpaper groups due to Davis and L\"uck \cite{DaLu13}: for a group $\Gamma=\Z^n \rtimes C_p$, with $p$ prime and $C_p$ acting freely on $\Z^n\setminus\{0\}$, there is  a natural short exact sequence (Theorem 8.1.(iii) in \cite{DaLu13}):
$$0\rightarrow\bigoplus_{(P)\in \mathcal{P}}\Tilde{R}_\mathbb C(P)\rightarrow K_0^\Gamma(\underline{E}\Gamma)\rightarrow K_0(\underline{B}\Gamma)\rightarrow 0$$
where $\mathcal{P}$ denotes the set of conjugacy classes of finite subgroups in $\Gamma$, and $\Tilde{R}_\mathbb C(P)$ denotes the kernel of the dimension map $R_\mathbb C(P)\rightarrow \Z:[V]\mapsto \dim_\mathbb C(V)$. This sequence splits naturally \footnote{Naturality, which is important for our purpose, follows from Theorem 0.7 in \cite{Lueck}; see also the last line of p. 411 in \cite{DaLu13}.} after tensoring with $\Z[1/p]$.

We apply this to $\Gamma=\mathbf{p2}=\Z^2\rtimes C_2$. We observe that:
\begin{itemize}
    \item $\alpha_1$ acts trivially on $\bigoplus_{(P)\in \mathcal{P}}\Tilde{R}_\mathbb C(P)$: indeed the image of this subgroup by $\mu_0^\mathbf{p2}$ is contained in the subgroup of $K_0(C^*(\mathbf{p2}))$ spanned by $B_2\setminus\{F_2\}$, where $\alpha_1$ is known to act trivially by the first part of the proof.
    \item As $\underline{B}\mathbf{p2}$ is homeomorphic to the 2-sphere $S^2$, and $\alpha_1$ preserves the orientation of $S^2$, the action of $\alpha_1$ on $K_0(\underline{B}\mathbf{p2})$ is trivial.
\end{itemize}

Since the sequence 
$$0\rightarrow (\bigoplus_{(P)\in \mathcal{P}}\Tilde{R}_\mathbb C(P))\otimes \Z[1/2] \rightarrow K_0^\mathbf{p2}(\underline{E}\mathbf{p2})\otimes\Z[1/2] \rightarrow K_0(\underline{B}\mathbf{p2})\otimes\Z[1/2]\rightarrow 0$$
splits naturally, $\alpha_1$ acts trivially on 
$K_0^\mathbf{p2}(\underline{E}\mathbf{p2})\otimes\Z[1/2]$. 
Since $K_0^\mathbf{p2}(\underline{E}\mathbf{p2})$ is torsion-free, the canonical map $K_0^\mathbf{p2}(\underline{E}\mathbf{p2})\rightarrow K_0^\mathbf{p2}(\underline{E}\mathbf{p2})\otimes\Z[1/2]$ is injective, so $\alpha_1$ acts trivially on $K_0^\mathbf{p2}(\underline{E}\mathbf{p2})$, hence also on $K_0(C^*(\mathbf{p2}))$. 
Of course the case of $\alpha_2$ is completely analogous. This concludes the proof.
\end{proof} 

\begin{Thm}\label{RHSforZ2rtimesGamma(2)}
Let $G=\Z^2 \rtimes \Gamma(2)$.  We have that 
$$K_0(C^*_r(G))=\Z^6\quad \text{and} \quad K_1(C^*_r(G))=\Z^{12}.$$
 Further, the assembly map $\mu_0^G: K_0^G(\underline{E}G)\rightarrow K_0(C^*_r(G))$ is an isomorphism.
\end{Thm}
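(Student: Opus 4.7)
The plan is to feed the information already available into the Pimsner--Voiculescu sequence \eqref{PVforZ2rtimesGamma(2)} and exploit the fact that $\Gamma(2)$ acts trivially on $K_0(C^*(\mathbf{p2}))$ by Lemma \ref{trivialaction}. Indeed, since $\alpha_1$ and $\alpha_2$ act as the identity on $K_0(C^*(\mathbf{p2}))$, the map $\beta$ in \eqref{PVforZ2rtimesGamma(2)} is identically zero. The exact sequence then splits into two short exact sequences
$$0\to K_1(C^*_r(G))\to K_0(C^*(\mathbf{p2}))\oplus K_0(C^*(\mathbf{p2}))\to 0$$
$$0\to K_0(C^*(\mathbf{p2}))\to K_0(C^*_r(G))\to 0,$$
from which we immediately read $K_0(C^*_r(G))\simeq \Z^6$ (via the inclusion-induced map $\iota_*\colon K_0(C^*(\mathbf{p2}))\to K_0(C^*_r(G))$) and $K_1(C^*_r(G))\simeq \Z^{12}$.

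For the assertion on the assembly map, the strategy is to compare with the wallpaper subgroup $\mathbf{p2}\hookrightarrow G$, for which BC is already known to hold. By naturality of the Baum--Connes assembly map (see e.g.\ \cite{MV03}), this inclusion yields a commutative diagram
\begin{equation*}
\begin{tikzcd}
K_0^{\mathbf{p2}}(\underline{E}\mathbf{p2}) \arrow[r] \arrow[d, "\mu_0^{\mathbf{p2}}"] & K_0^G(\underline{E}G) \arrow[d, "\mu_0^G"] \\
K_0(C^*(\mathbf{p2})) \arrow[r, "\iota_*"] & K_0(C^*_r(G)).
\end{tikzcd}
\end{equation*}
The top horizontal arrow is an isomorphism by the second statement of Proposition \ref{LHSforZ2rtimesGamma(2)}; the bottom horizontal arrow is $\iota_*$, which we just showed to be an isomorphism; and the left vertical arrow is the Baum--Connes assembly map for $\mathbf{p2}$, which is an isomorphism since $\mathbf{p2}$ is an amenable group (or alternatively as a special case of the results already invoked for wallpaper groups). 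Consequently $\mu_0^G$ is also an isomorphism.

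The main substantive input has already been supplied in advance (Lemma \ref{trivialaction}), so no genuine obstacle remains: the argument is essentially a diagram chase once the triviality of the $\Gamma(2)$-action on $K_0(C^*(\mathbf{p2}))$ has been established. The only mild point of care is that the bottom horizontal map in the diagram is indeed the inclusion-induced map $\iota_*$ coming from $\mathbf{p2}\hookrightarrow G$; this is exactly the map appearing as the third arrow in \eqref{PVforZ2rtimesGamma(2)}, which is surjective (hence an isomorphism here, since $\beta=0$).
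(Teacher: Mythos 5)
Your proposal is correct and follows essentially the same route as the paper: Lemma \ref{trivialaction} forces $\beta=0$ in the Pimsner--Voiculescu sequence \eqref{PVforZ2rtimesGamma(2)}, giving the two $K$-groups, and then naturality of the assembly map applied to $\mathbf{p2}\hookrightarrow G$ together with the second statement of Proposition \ref{LHSforZ2rtimesGamma(2)} and the isomorphism $\mu_0^{\mathbf{p2}}$ yields that $\mu_0^G$ is an isomorphism. Your extra remark identifying the bottom horizontal map with the inclusion-induced $\iota_*$ from the PV sequence is a correct and welcome point of care, but not a deviation from the paper's argument.
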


{\bf Proof:} 
    To show the first part, note that by Lemma \ref{trivialaction}, we have that $\beta=0$. Plug this in the 
    exact sequence \eqref{PVforZ2rtimesGamma(2)}, the sequence splits as 
    $$0\rightarrow K_0(C^*(\mathbf{p2}))\rightarrow K_0(C^*_r(G))\rightarrow 0$$
    and 
    $$0\rightarrow K_1(C^*_r(G))\rightarrow K_0(C^*(\mathbf{p2}))\oplus K_0(C^*(\mathbf{p2}))\rightarrow 0,$$
    so that $K_0(C^*(\mathbf{p2}))=\Z^6$ is isomorphic to $ K_0(C^*_r(G))$, while $ K_1(C^*_r(G))$ is isomorphic to the direct sum of two copies of $K_0(C^*(\mathbf{p2}))=\Z^6$.
For the second statement, note that by the first part of the proof and by the second part of Proposition \ref{LHSforZ2rtimesGamma(2)}, the inclusion $\iota: \mathbf{p2}\rightarrow G$ induces isomorphisms at the level of $K_0$, on both sides of the conjecture. By naturality of the assembly map we have
    $$\mu_0^G\circ\iota_*=\iota_*\circ \mu_0^\mathbf{p2}.$$  Since $\mu_0^\mathbf{p2}$ is an isomorphism, so is $\mu_0^G$.\hfill$\square$
    



Authors addresses:\\

\medskip
\noindent
Departamento de Matem\'aticas\\
Universidad de Sevilla\\
c/ Tarfia, s/n \\
41018 Sevilla - SPAIN\\
ramonjflores@us.es

\medskip
\noindent
Institute of Mathematics\\
University of Potsdam\\
Campus Golm, Haus 9\\
Karl-Liebknecht-Str. 24-25\\
D-14476 Potsdam OT Golm - GERMANY\\
sanaz.pooya@uni-potsdam.de

\medskip
\noindent
Institut de math\'ematiques\\
Universit\'e de Neuch\^atel\\
11 Rue Emile Argand - Unimail\\
CH-2000 Neuch\^atel - SUISSE\\
alain.valette@unine.ch

\end{document}